\documentclass[11pt]{article} 

\usepackage{amsmath}
\usepackage{lmodern}
\usepackage[T1]{fontenc}
\usepackage[babel=true]{microtype}
\usepackage{amsxtra}%
\usepackage{amsfonts}%
\usepackage{amssymb}%
\usepackage{amsthm}
\usepackage{graphicx}
\usepackage{epstopdf}
\usepackage{color,hyperref}
\hypersetup{colorlinks,breaklinks,
	linkcolor=blue,urlcolor=blue,
	anchorcolor=blue,citecolor=blue}

\usepackage{subcaption}
\usepackage{appendix}
\usepackage[font=small]{caption}
\usepackage{calc}

\usepackage[margin=0.75in]{geometry}

\newcommand{\eps}{\varepsilon}

\newcommand{\R}{\mathbb{R}}

\newcommand{\C}{\mathbb{C}}

\newcommand{\F}{\mathbb{F}}

\renewcommand{\phi}{\varphi}

\setlength{\parindent}{0.cm}
\setlength{\parskip}{0.5em}

\usepackage{enumitem}
\setitemize{noitemsep,topsep=0pt,parsep=2pt,partopsep=0pt}
\renewcommand{\Re}{\mathrm{Re} \,}

\def\XXint#1#2#3{{\setbox0=\hbox{$#1{#2#3}{\int}$ }
		\vcenter{\hbox{$#2#3$ }}\kern-.6\wd0}}

\newtheorem{thm}{Theorem}

\newtheorem*{thm*}{Theorem}

\newtheorem{prop}{Proposition}
\newtheorem{lemma}[prop]{Lemma}
\newtheorem{corollary}[prop]{Corollary}

\newtheorem{defi}{Definition}

\newtheorem{remark}[prop]{Remark}

\numberwithin{equation}{section}
\numberwithin{prop}{section}

\renewcommand{\u}{\mathbf{u}}

\newcommand{\Arg}{\mathrm{Arg} \,}

\newcommand{\utc}{\mathbf{u}_\mathrm{tc}}

\newcommand{\dtc}{d_\mathrm{tc}}

\newcommand{\cpc}{c_\mathrm{pc}}

\newcommand{\upcf}{u_\mathrm{pc}}
\newcommand{\vpcf}{v_\mathrm{pc}}
\newcommand{\uscf}{u_\mathrm{sc}}
\newcommand{\vscf}{v_\mathrm{sc}}

\newcommand{\Uscf}{\mathbf{u}_\mathrm{sc}}
\newcommand{\Upcf}{\mathbf{u}_\mathrm{pc}}

\newcommand{\Ascf}{\mathcal{A}_\mathrm{sc}}
\newcommand{\Lscf}{\mathcal{L}_\mathrm{sc}}

\newcommand\extrafootertext[1]{%
	\bgroup
	\renewcommand\thefootnote{\fnsymbol{footnote}}%
	\renewcommand\thempfootnote{\fnsymbol{mpfootnote}}%
	\footnotetext[0]{#1}%
	\egroup
}

\newcommand{\Apcf}{\mathcal{A}_\mathrm{pc}}
\newcommand{\Lpcf}{\mathcal{A}_\mathrm{pc}}

\newcommand{\ukpp}{u_\mathrm{kpp}}
\newcommand{\akpp}{a_\mathrm{kpp}}
\newcommand{\uckpp}{u_c^\mathrm{kpp}}
\newcommand{\Akpp}{\mathcal{A}_\mathrm{kpp}}

\newcommand{\vpt}{v_\mathrm{pt}}
\newcommand{\sigmapt}{\sigma_\mathrm{pt}}
\newcommand{\sigmasc}{\sigma_\mathrm{sc}}
\newcommand{\etasc}{\eta_\mathrm{sc}}
\newcommand{\etapc}{\eta_\mathrm{pc}}
\newcommand{\sigmapc}{\sigma_\mathrm{pc}}
\newcommand{\nusc}{\nu_\mathrm{sc}}
\renewcommand{\csc}{c_\mathrm{sc}}

\newcommand{\omegasc}{\omega_\mathrm{sc}}
\newcommand{\omegapc}{\omega_\mathrm{pc}}
\newcommand{\cpt}{c_\mathrm{pt}}

\newcommand{\Bsc}{\mathcal{B}_\mathrm{sc}}

\begin{document}
	\begin{center}
		{\fontsize{15}{15}\fontseries{b}\selectfont{Growth of cancer stem cell driven tumors: staged invasion, linear determinacy, and the tumor invasion paradox}}\\[0.2in]
		Montie Avery \\[0.1in]
		
		\textit{\footnotesize 
			Department of Mathematics and Statistics, Boston University, 665 Commonwealth Ave, Boston, MA 02215, USA} \\
	\end{center}
	
	\begin{abstract}
		We study growth of solid tumors in a partial differential equation model introduced by Hillen et al \cite{Hillen1} for the interaction between tumor cells (TCs) and cancer stem cells (CSCs). We find that invasion into the cancer-free state may be separated into two regimes, depending on the death rate of tumor cells. In the first, \emph{staged invasion regime}, invasion into the cancer-free state is lead by tumor cells, which are then subsequently invaded at a slower speed by cancer stem cells. In the second, \emph{TC extinction regime}, cancer stem cells directly invade the cancer-free state. Relying on recent results establishing front selection propagation under marginal stability assumptions, we use geometric singular perturbation theory to establish existence and selection properties of front solutions which describe both the primary and secondary invasion processes. With rigorous predictions for the invasion speeds, we are then able to heuristically predict how the total cancer mass as a function of time depends on the TC death rate, finding in some situations a \emph{tumor invasion paradox}, in which increasing the TC death rate leads to an \emph{increase} in the total cancer mass. Our methods give a general approach for verifying linear determinacy of spreading speeds of invasion fronts in systems with fast-slow structure. 
	\end{abstract}
	
	\section{Introduction}
	
	In recent decades, evidence has accumulated suggesting that a distinguished class of cells referred to as \emph{cancer stem cells} (CSCs) play a key role in the persistence, growth, and re-emergence of tumors \cite{Dingli, Gupta, Hanahan}. Cancer stem cells are distinguished by their ability to reproduce indefinitely, and to reproduce into both stem cells and non-stem tumor cells (TCs) \cite{Gupta}. Evidence for the existence of cancer stem cells was found first in leukemias \cite{Lapidot, Bonnet} and has since been found in many solid tumors; see for instance \cite{Singh1, Singh2, AlHajj, Dick, Fioriti, Maitland, Todaro, Cammareri}. Understanding the role cancer stem cells play in tumor growth and response to treatment is increasingly being recognized as an important component of developing cancer treatments \cite{Dingli}. 
	
	The following model for dynamics of solid tumors driven by cancer stem cells, including spatial diffusion, was introduced in \cite{Hillen1}:
	\begin{align}
		u_t &= D u_{yy} + p_s \gamma_u (1-u-v) u \nonumber \\
		v_t &= D v_{yy} + (1-p_s) \gamma_u  (1-u-v) u + \gamma_v (1-u-v) v - \alpha v, \quad y \in \R, \quad t > 0 \label{e: model}
	\end{align}
	The variable $u(y,t)$ denotes the concentration of cancer stem cells (CSCs), while $v(y,t)$ denotes the concentration of tumor cells (TCs). In the microscopic dynamics associated to this model model, CSCs reproduce at rate $\gamma_u$ by dividing either into two CSCs with probability $p_s$, or one CSC and one tumor stem cell, with probability $1-p_s$. Tumor cells divide only into other tumor cells, with rate $\gamma_v$. Both species of cells diffuse with diffusion coefficient $D$. Tumor cells die  with rate $\alpha$ due to a combination of their inherent limited life span and possibly an external treatment regimen. The factor of $(1-u-v)$ multiplying the growth terms reflects the fact that TCs and CSCs compete for resources. In particular, the carrying capacities of TCs and CSCs are assumed to be the same.  For more background on this model, see \cite{Hillen1, Hillen2}, the latter of which also compares the dynamics of this model to an agent-based model for the cell dynamics. 
	
	As in \cite{Hillen2} we consider \eqref{e: model} with $p_s = \eps > 0$ small, to reflect the observation that CSCs typically produce non-stem tumor cells at a much higher rate than they produce new CSCs. We also assume, as in \cite{Hillen2}, that cell diffusion is slow and on the same time scale, so that we set $D = \eps d$ with $d > 0$ fixed. For simplicity, we assume that the overall reproduction rates of CSCs and TCs are the same, setting $\gamma_u = \gamma_v = 1$. Introducing the rescaled time and space variables $\tau = \eps t, x = \frac{y}{\sqrt{d}}$, we find the rescaled model
	\begin{align}
		u_\tau &=  u_{xx} + (1-u-v)u \nonumber \\
		\eps v_\tau &= \eps  v_{xx} + (1-\eps) (1-u-v) u + (1-u-v) v - \alpha v. \label{e: eqn}
	\end{align}
	We study tumor growth in this model through the mathematical framework of front propagation into unstable states. The system \eqref{e: eqn} admits three spatially constant equilibria: the \emph{cancer-free state} $(u,v) = (0,0)$, the \emph{pure TC state} $(u, v) = (0, 1-\alpha)$, and the \emph{pure CSC state} $(u,v) = (1,0)$. Since the cell population density should be positive, the pure TC state is only physically relevant in the regime $0 < \alpha < 1$. The cancer-free and pure TC states are both unstable (in the regime $0 < \alpha < 1$ in which the pure TC state is relevant), while the pure CSC state is stable. One then expects that when perturbed, the cancer-free state is invaded by some combination of TC cells and CSC cells. We are interested in understanding the nature of this invasion process and predicting the associated invasion speed. 
	
	\begin{figure}
		\centering
		\begin{subfigure}{0.495\textwidth}
			\includegraphics[width=1\textwidth]{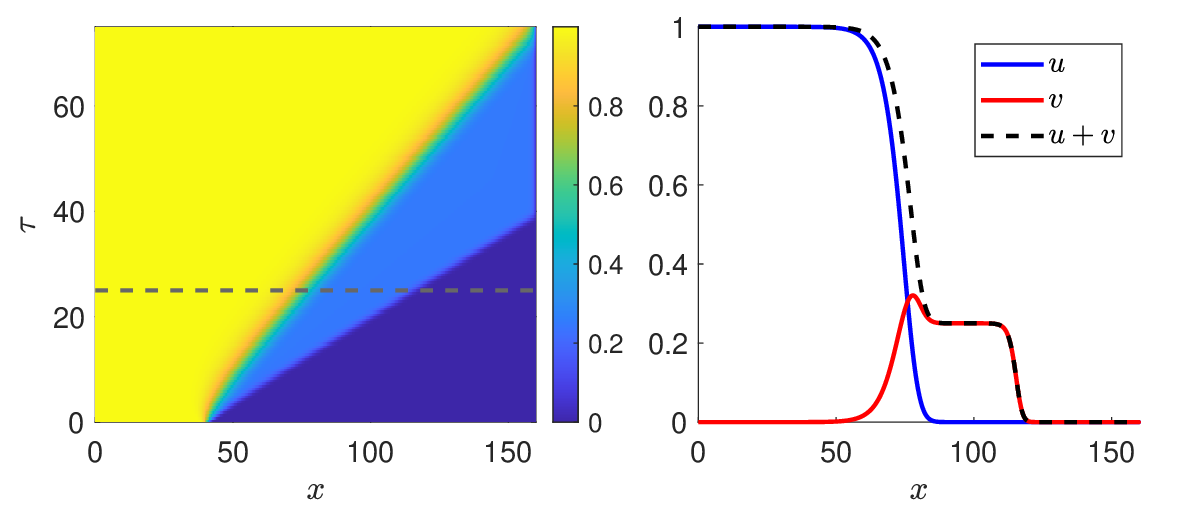}
		\end{subfigure}
		\hfill
		\begin{subfigure}{0.495\textwidth}
			\includegraphics[width=1\textwidth]{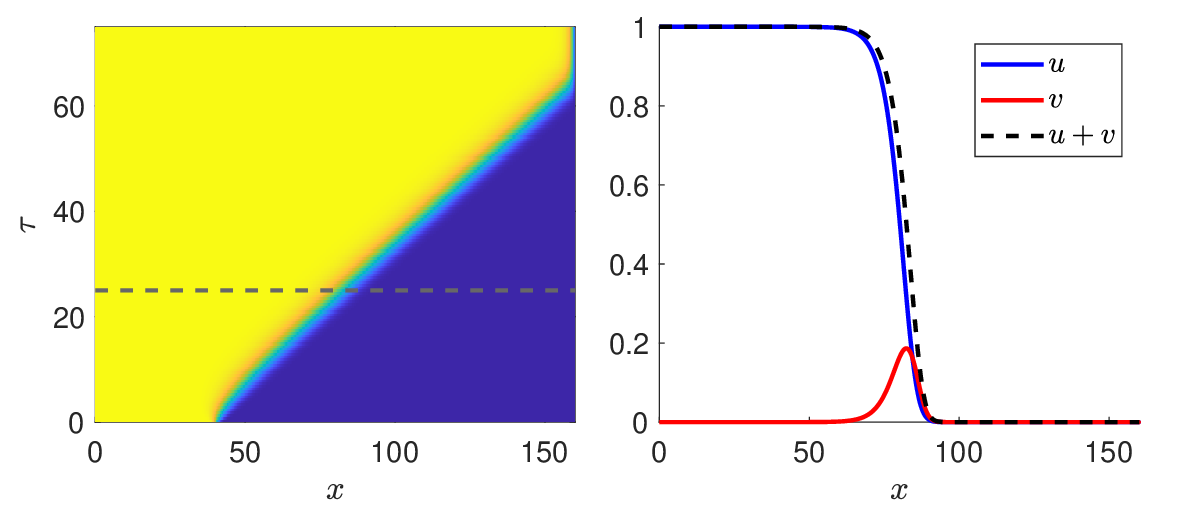}
		\end{subfigure}
		\caption{Far left: space time plot of $u + v$ for $\alpha = 0.75, \eps = 0.1$. Center left: $u, v,$ and $u$ against $x$ for the slice $\tau = 25$ indicated in the space time plot with the dashed line, for $\alpha = 0.75, \eps = 0.1$. Middle right: space time plot of $u + v$ for $\alpha = 1.25, \eps = 0.1$. Far right: $u, v,$ and $u+v$ against $x$ for the slice $\tau = 25$ indicated in the space time plot with the dashed line, for $\alpha = 1.25, \eps = 0.1$.}
		\label{f: spacetime}
	\end{figure}
	
	\paragraph{Summary of main results.} We find that the dynamics may be separated into two distinct cases depending on the values of the parameters $\alpha$ and $\eps$. 
	
	In the \emph{staged invasion regime}, $0 < \alpha < \frac{1}{1+\eps}$, the cancer-free state is initially invaded by tumor cells, with primary invasion speed
	\begin{align}
		c_\mathrm{pt}(\alpha, \eps) = 2 \sqrt{\frac{1-\alpha}{\eps}}.
	\end{align}
	Since the pure TC state is unstable against CSCs, the tumor cells in the wake of the primary invasion front are then themselves invaded by cancer stem cells, with secondary invasion speed $\csc(\alpha) = 2 \sqrt{\alpha} < c_\mathrm{pt}(\alpha; \eps)$. The primary invasion of the cancer-free state by tumor cells is described rigorously in Theorem \ref{t: primary invasion}, while the subsequent invasion of the tumor cells by cancer stem cells is described in Theorem \ref{t: secondary invasion}. See the left two panels of Figure \ref{f: spacetime} for plots of solutions of \eqref{e: eqn} in this regime. 
	
	In the \emph{TC extinction regime}, $\alpha > \frac{1}{1+\eps}$, the death rate of tumor cells is too high to support primary invasion by TCs, and so the cancer-free state is instead invaded directly by cancer stem cells, with speed $\cpc = 2$. This invasion process is described rigorously in Theorem \ref{t: primary CSC invasion}. See the right two panels of Figure \ref{f: spacetime} for plots of solutions in this regime. 

	The challenge with rigorously describing these invasion processes is that \eqref{e: eqn} does not have a comparison principle, the key tool typically used to prove results on front propagation. Indeed, compared for instance to competitive Lotka-Volterra systems, the term $(1-\eps) (1-u-v)u$ in \eqref{e: eqn}, representing the assumption that CSCs may reproduce into TCs, breaks the competitive structure and associated comparison principles. We therefore rely instead on the recent conceptual approach to front propagation developed in \cite{AveryScheelSelection, AverySelectionRD}, which abandons comparison principles and instead establishes invasion results relying only on existence and marginal spectral stability of invasion fronts. These existence and stability conditions in \eqref{e: eqn} may be formulated as ODE problems, and so we can use Fenichel's geometric singular perturbation theory \cite{Fenichel} to study the singular $\eps = 0$ limit. 
	
	The \emph{tumor invasion paradox} refers to a phenomenon in which increasing the death rate of TCs --- for instance, by increasing the intensity of an applied treatment --- may lead to faster spatial spreading of the tumor. This phenomenon has been observed, for instance, in response to radiation therapies, where it is referred to as radiation-induced invasion \cite{Maggiorella, Kargiotis, Lin}.
	
	Within the model \eqref{e: eqn}, the authors of \cite{Hillen2} establish spreading of CSCs into the pure TC state rigorously for $\eps = 0, 0 < \alpha < 1$, and associate the tumor invasion paradox with the fact that this spreading speed is increasing in the tumor death rate $\alpha$. Here, since we extend the analysis to $\eps > 0$, we obtain a more refined picture of the dynamics, in particular observing the staged invasion for $0 < \alpha < 1$. Whether there is a ``paradox'' or not then depends on what quantities characterizing the tumor spread one is most interested in. We find, for $0 < \alpha < 1$: 
	\begin{itemize}
		\item The speed of the primary invasion front, describing the invasion of TCs into the cancer-free state, is always decreasing in $\alpha$. So, increasing the tumor death rate always slows down the leading edge of cancer cells. 
		\item The speed of the secondary invasion front, describing the invasion of TCs by CSCs, is increasing in $\alpha$. That is, increasing the tumor cell death rate enhances the spread of CSCs. 
		\item In a large bounded domain $x \in (0,L)$, fixing the initial condition and the time $\tau> 0$, the total cancer mass at time $\tau$ is decreasing in $\alpha$ unless the tumor cells have already spread to the entire domain, after which the mass starts to increase in $\alpha$.
	\end{itemize}
	The first two points are established rigorously by Theorems \ref{t: primary invasion} and \ref{t: secondary invasion}. To establish the third point, in Section \ref{s: mass} we argue heuristically using the rigorously established front speeds, and corroborate with numerical simulations. So, if one is most interested in tracking the leading edge of cancer cells, then there is no paradox. However, CSCs are especially important for the resurgence of tumors, so one may consider the spreading speed of CSCs to be the most important quantity, in which case the second point becomes the most relevant. Tracking the total tumor mass highlights the relevance of transient dynamics for this staged invasion process.  
	
	\subsection{Rigorous results}
	
	 We now introduce some definitions needed to precisely state our main results.
	\begin{defi}[Primary TC front]
		A \emph{primary tumor cell producing front} (primary TC front) with speed $c$ is a traveling wave solution $(u(x,t), v(x,t)) = (0, \vpt(x-ct))$ to \eqref{e: eqn} satisfying
		\begin{align}
			\lim_{\xi \to -\infty}  \vpt(\xi) = 1-\alpha, \quad \lim_{\xi \to \infty} \vpt(\xi) = 0. 
		\end{align}
	\end{defi}
		\begin{defi}[Primary CSC front]
		A \emph{primary cancer stem cell producing front} (primary CSC front) with speed $c$ is a traveling wave solution $(u(x,t), v(x,t)) = (\upcf(x-ct), \vpcf(x-ct))$ to \eqref{e: eqn} satisfying 
		\begin{align}
			\lim_{\xi \to -\infty} (\upcf(\xi), \vpcf(\xi)) = (1, 0), \quad \lim_{\xi \to \infty} (\upcf(\xi), \vpcf(\xi)) = (0, 0). 
		\end{align}
	\end{defi}
	\begin{defi}[Secondary CSC front]
		A \emph{secondary cancer stem cell producing front} (secondary CSC front) with speed $c$ is a traveling wave solution $(u(x,t), v(x,t)) = (\uscf(x-ct), \vscf(x-ct))$ to \eqref{e: eqn} satisfying
		\begin{align}
			\lim_{\xi \to -\infty} (\uscf(\xi), \vscf(\xi)) = (1, 0), \quad \lim_{\xi \to \infty} (\uscf(\xi), \vscf(\xi)) = (0, 1- \alpha). 
		\end{align}
	\end{defi}

	Primary TC fronts describe the invasion of the cancer-free state by the pure TC state, which occurs for $0 < \alpha < 1$. Since the pure TC state is unstable for $0 < \alpha < 1$, the pure CSC state then begins to invade the pure TC state, developing a secondary CSC front. Primary CSC fronts are relevant in the TC extinction regime $\alpha > 1$. 
	
	Notice that $u \equiv 0$ is an invariant subspace in \eqref{e: eqn} (reflecting the fact that TCs do not produce CSCs), and the pure TC state $v = 1- \alpha$ is stable within this invariant subspace.  We therefore expect that, within this invariant subspace, localized perturbations to the cancer-free state $v \equiv 0$ will spread outward and grow into stable invasion fronts propagating with a selected speed. As is often the case in the mathematical study of front invasion, we focus on one front interface only, thereby replacing localized perturbations with perturbations supported on a half-line, reflected in the following definition. 
	
	\begin{defi}[Initial data for primary TC invasion]
		We say that the pair $(u_0, v_0) \in L^\infty(\R, \R^2)$ is \emph{primary TC invasion initial data} if the following hold. 
		\begin{enumerate}
			\item We have 
			\begin{align}
				\lim_{\xi \to -\infty} u_0(\xi) = 0, \quad \lim_{\xi \to -\infty} v_0(\xi) = 1-\alpha.
			\end{align}
			\item There exists $R > 0$ such that $u_0 (\xi) = v_0(\xi) = 0$ for all $\xi \geq R$
		\end{enumerate} 
	\end{defi}
	
	\begin{defi}[Initial data for primary CSC invasion]
		We say that the pair $(u_0, v_0) \in L^\infty (\R, \R^2)$ is \emph{primary CSC invasion initial data} if the following hold.
		\begin{itemize}
			\item We have
			\begin{align}
				\lim_{\xi \to \infty} u_0(\xi) = 1, \quad \lim_{\xi \to -\infty} v_0(\xi) = 0.
			\end{align}
			\item There exists $R > 0$ such that $u_0 (\xi) = v_0(\xi) = 0$ for all $\xi > R$. 
		\end{itemize}
	\end{defi}
	Primary invasion initial data therefore describe the dynamics of propagation into the cancer-free state, which may be lead by either TCs or CSCs. Since the pure TC state is unstable for $0 < \alpha < 1$, we also consider invasion into the pure TC state. 
	\begin{defi}[Initial data for secondary CSC invasion]
		We say that the pair $(u_0, v_0) \in L^\infty(\R, \R^2)$ is \emph{secondary CSC invasion initial data} if the following hold.
		\begin{itemize}
			\item We have
			\begin{align}
				\lim_{\xi \to -\infty} u_0 (\xi) = 1, \quad \lim_{\xi \to -\infty} v_0(\xi) = 0. 
			\end{align}
			\item There exists $R > 0$ such that $u_0 (\xi) = 0$ and $v_0(\xi) = 1- \alpha$ for all $\xi > R$. 
		\end{itemize}
	\end{defi}

	With these definitions in hand, we are now ready to rigorously formulate our results on existence and selection of invasion fronts.
	
	\begin{thm}[Primary TC invasion]\label{t: primary invasion}
		Assume $0 < \alpha < 1$ and $\eps > 0$. Let $(u(x,t), v(x,t))$ be a solution to \eqref{e: eqn} with any primary TC invasion initial data additionally satisfying $u_0 (x) \equiv 0$ and $v_0 \geq 0$. Then $u(x,t) \equiv 0$ for all time, and there exists a shift $\sigma_\mathrm{pt}(t) \in \R$ such that
		\begin{align}
			\lim_{t \to \infty} v(x + \sigma_\mathrm{pt} (t), t) = \vpt^*(x; \alpha, \eps), \label{e: primary TC convergence}
		\end{align}
		for all $x \in \R$, where $\vpt^* (x; \alpha, \eps)$ is a primary TC front with speed 
		\begin{align}
			c_\mathrm{pt} (\alpha, \eps) = 2 \sqrt{ \frac{1-\alpha}{\eps}}. 
		\end{align}
		As $t \to \infty$, the shift function $\sigmapt(t)$ has asymptotics
		\begin{align}
			\sigma_\mathrm{pt}(t) = c_\mathrm{pt} (\alpha, \eps) t - \frac{3}{2 \eta_\mathrm{pt} (\alpha, \eps)} \log t + x_\infty + \mathrm{o}(1),
		\end{align}
		where $x_\infty \in \R$ depends on $v_0$, and $\eta_\mathrm{pt} (\alpha, \eps) = \frac{1}{2} c_\mathrm{pt} (\alpha, \eps)$. 
		
		In particular, the solution propagates with asymptotic speed $c_\mathrm{pt}(\alpha, \eps)$. The front $\vpt^*$ is unstable against perturbations with $u_0 (x) > 0$ on some set of positive measure. 
	\end{thm}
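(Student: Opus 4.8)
\emph{Proof strategy.} The plan is to reduce to a scalar Fisher--KPP equation and then apply the front-selection framework of \cite{AverySelectionRD, AveryScheelSelection}. Since the hypothesis imposes $u_0 \equiv 0$ and $u \equiv 0$ solves the first equation of \eqref{e: eqn} for every $v$, uniqueness for the parabolic system forces $u(x,t) \equiv 0$ for all $t > 0$. On this invariant subspace $v$ satisfies the scalar equation
\[
	v_\tau = v_{xx} + \frac{1}{\eps}\, v\,(1-\alpha-v),
\]
which is of monostable KPP type: the reaction vanishes at $v=0$ with positive linearization $\frac{1-\alpha}{\eps}$ (using $\alpha < 1$) and at the stable state $v = 1-\alpha$, and it lies below its linearization on $[0,1-\alpha]$. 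Its linear spreading speed is $2\sqrt{(1-\alpha)/\eps} = \cpt(\alpha,\eps)$, with critical spatial decay rate $\eta_\mathrm{pt} = \tfrac12\cpt$. From $v_0 \ge 0$, the comparison principle for this scalar equation gives $0 \le v \le \max(\|v_0\|_{L^\infty}, 1-\alpha)$ for all time, while the primary TC invasion data hypotheses say precisely that $v_0 \to 1-\alpha$ as $\xi \to -\infty$ and $v_0$ is compactly supported — hence steeper than $e^{-\eta_\mathrm{pt}\xi}$ — as $\xi \to +\infty$, so $v_0$ lies in the admissible class of steep perturbations of the stable state.

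Next I would establish existence and marginal stability of the critical front and then invoke the selection theorem. A phase-plane analysis of the traveling-wave ODE yields, for each $c \ge \cpt$, a strictly monotone front from $1-\alpha$ at $-\infty$ to $0$ at $+\infty$; the critical one at $c = \cpt$, call it $\vpt^*$, has the pulled-front tail $\vpt^*(\xi) \sim (a\xi+b)\,e^{-\eta_\mathrm{pt}\xi}$ with $a > 0$ as $\xi \to +\infty$. Passing to the frame $\xi = x - \cpt\tau$ and conjugating the linearization about $\vpt^*$ by $e^{\eta_\mathrm{pt}\xi}$, the asymptotic dispersion relation at $\xi = +\infty$ becomes $\lambda = \nu^2$, the simple pinched double root at the origin that encodes marginal pointwise stability; monotonicity of $\vpt^*$ together with a Sturm--Liouville/positivity argument excludes point spectrum of the conjugated operator in $\{\Re\lambda \ge 0\}\setminus\{0\}$, and its essential spectrum lies in $\{\Re\lambda \le 0\}$. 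For this scalar reduction these are all classical facts, so $\vpt^*$ meets the marginal spectral stability hypotheses of \cite{AverySelectionRD}. Combined with the admissibility of the initial data from the previous paragraph, the selection theorem of \cite{AverySelectionRD} applied to the scalar equation gives the convergence \eqref{e: primary TC convergence} together with
\[
	\sigmapt(t) = \cpt(\alpha,\eps)\, t - \frac{3}{2\eta_\mathrm{pt}(\alpha,\eps)}\log t + x_\infty + \mathrm{o}(1), \qquad t \to \infty,
\]
with $x_\infty$ depending on $v_0$. (The classical KPP convergence results with the Bramson logarithmic delay would suffice here; the point of using the selection framework is that the same structure is reused in Theorems~\ref{t: secondary invasion} and \ref{t: primary CSC invasion}, where no comparison principle is available.)

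For the instability statement, linearize \eqref{e: eqn} about $(0,\vpt^*)$ in the frame $\xi = x - \cpt\tau$. Because the base state has $u \equiv 0$, the $u$-component decouples and solves
\[
	\partial_\tau u = \partial_\xi^2 u + \cpt\,\partial_\xi u + \bigl(1 - \vpt^*(\xi)\bigr)\, u,
\]
with coefficient $1 - \vpt^*(\xi) \ge \alpha > 0$ everywhere and limits $\alpha$ at $-\infty$ and $1$ at $+\infty$. Hence $\lambda = 1$ lies in the essential spectrum of this operator on $L^\infty(\R)$, so $(0,\vpt^*)$ is spectrally unstable. Concretely: for the full $u$-equation $u_\tau = u_{xx} + (1-u-v)u$, with $v$ obeying the bounds above, a small nonnegative bump of $u$ placed well behind the front experiences effective linear growth rate $1 - u - v$ close to $\alpha > 0$ and grows by the scalar comparison principle, so $\|u(\cdot,\tau)\|_{L^\infty(\R)}$ does not decay; in fact the perturbation develops into a cancer-stem-cell invasion — a secondary CSC front when $0 < \alpha < \tfrac1{1+\eps}$, or a primary CSC front with speed $\cpc = 2 > \cpt$ overtaking the TC front when $\tfrac1{1+\eps} < \alpha < 1$ — consistent with Theorems~\ref{t: secondary invasion} and \ref{t: primary CSC invasion}. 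Thus $\vpt^*$ is not a stable solution of \eqref{e: eqn} once $u_0 > 0$ on a set of positive measure.

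The conceptual content is concentrated in the second paragraph: verifying that $\vpt^*$ satisfies the marginal spectral stability conditions of \cite{AverySelectionRD} and that the limiting shift carries the sharp $-\tfrac{3}{2\eta_\mathrm{pt}}\log t$ Bramson correction. For the scalar KPP reduction every ingredient — existence and monotonicity of the critical front, the double root of the dispersion relation, absence of unstable point spectrum — is classical, so I do not expect a genuinely hard step here; the only delicate point, as always for pulled fronts, is the precise logarithmic delay, which is exactly what the cited framework (or the classical KPP theory) delivers. Invariance of $\{u = 0\}$, the comparison bounds, and the decoupled-$u$ instability are routine.
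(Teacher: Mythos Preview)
Your reduction is exactly the paper's: on the invariant subspace $u\equiv 0$ the $v$-equation becomes the scalar Fisher--KPP equation $\eps v_\tau = \eps v_{xx} + (1-\alpha)v - v^2$, and the result is inherited from known scalar theory. The instability argument you give (decoupling of the $u$-linearization with strictly positive zeroth-order coefficient) is also the natural one.

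One correction to your main line of argument, though. You propose to deduce \eqref{e: primary TC convergence} and the Bramson shift from the selection theorem of \cite{AverySelectionRD}, but that framework does \emph{not} deliver the conclusion of Theorem~\ref{t: primary invasion} as stated: it yields only the weaker statement exemplified by Theorem~\ref{t: secondary invasion} --- existence of an open class of initial data for which the solution stays $\delta$-close to the front --- not full convergence for \emph{every} admissible $v_0$. The paper makes this distinction explicit when comparing Theorems~\ref{t: primary invasion} and~\ref{t: secondary invasion}, attributing the stronger global convergence in Theorem~\ref{t: primary invasion} precisely to the comparison principle available in the scalar reduction. Accordingly, the paper does not route Theorem~\ref{t: primary invasion} through \cite{AverySelectionRD} at all; it simply cites the classical Bramson results \cite{Bramson1,Bramson2} (and their PDE refinements) for the scalar KPP equation. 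Your parenthetical remark already identifies this alternative, so the fix is just to promote it to the actual proof and drop the overclaim about what \cite{AverySelectionRD} provides here.
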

	When $u_0 \equiv 0$, \eqref{e: eqn} reduces to the scalar reaction-diffusion equation
	\begin{align}
		\eps v_\tau = \eps v_{xx} + (1- \alpha) v - v^2,
	\end{align}
	which is a Fisher-KPP equation. Theorem \ref{t: primary invasion} therefore follows readily from classical results of Bramson \cite{Bramson1, Bramson2}, with recent refinements via PDE techniques \cite{Comparison1, Comparison2, Comparison3}. In particular, recent results also establish higher asymptotics for $\sigma_\mathrm{pt}(t)$ as well as precise convergence rates in \eqref{e: primary TC convergence} \cite{Comparison3, Graham, AnHendersonRyzhik}. 
	
	The next two theorems describe propagation of CSCs. In the regime $0 < \alpha < 1$, CSCs invade the TC state with speed $\csc (\alpha) = 2 \sqrt{\alpha}$.
	\begin{thm}[Secondary CSC invasion]\label{t: secondary invasion}
		Fix $0 < \alpha < 1$, and assume $\eps > 0$ is sufficiently small. Given any $\delta > 0$, there exist secondary CSC invasion initial data $(u_0 (x), v_0(x))$ and an associated shift $\sigmasc(t) \in \R$ such that the solution $(u(x,t), v(x,t))$ to \eqref{e: eqn} with initial data $(u_0, v_0)$ satisfies
		\begin{align}
			\sup_{x \in \R} \left| \begin{pmatrix} u(x + \sigmasc(t),t) \\ v(x+\sigmasc(t),t) \end{pmatrix} - \begin{pmatrix} \uscf^*(x; \alpha, \eps) \\ \vscf^*(x; \alpha, \eps) \end{pmatrix} \right| < \delta \label{e: theorem 2 closeness}
		\end{align}
		for all $t$ sufficiently large, where $(\uscf^* (x; \alpha, \eps), \vscf^*(x; \alpha, \eps))$ is a secondary CSC front with speed $\csc(\alpha) = 2 \sqrt{\alpha}$. 
		As $t \to \infty$, the shift function $\sigmasc(t)$ has the asymptotics
		\begin{align}
			\sigmasc(t) = \csc (\alpha) t - \frac{3}{2 \etasc (\alpha)} \log t + x_\infty + \mathrm{o}(1),
		\end{align}
		where $x_\infty$ depends on the initial data, and $\etasc(\alpha) = \frac{1}{2} \csc (\alpha) = \sqrt{\alpha}$. In particular, the pure CSC state invades the pure TC state with asymptotic speed $\csc(\alpha) = 2 \sqrt{\alpha}$. 
	\end{thm}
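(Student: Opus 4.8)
The plan is to apply the general front-selection machinery of \cite{AveryScheelSelection, AverySelectionRD} to the invasion of the pure TC state $(0,1-\alpha)$ by the pure CSC state $(1,0)$. According to that framework, it suffices to verify two ingredients: (i) \emph{existence} of a secondary CSC front with the claimed speed $\csc(\alpha)=2\sqrt{\alpha}$ that is steep (i.e. decays at the linear spreading rate $\etasc(\alpha)=\sqrt{\alpha}$ into $(0,1-\alpha)$), and (ii) \emph{marginal spectral stability} of this front, meaning the linearization in an exponentially weighted space has no spectrum in the closed right half plane except for a simple resonance at the origin arising from translation and the marginal pinched double root of the dispersion relation. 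Once both are in place, the abstract results give nonlinear asymptotic stability with the logarithmic delay $-\tfrac{3}{2\etasc}\log t$, yielding \eqref{e: theorem 2 closeness} and the stated asymptotics of $\sigmasc(t)$, with the initial data constructed near the front as in those references.

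The linear spreading speed computation comes first. Linearizing \eqref{e: eqn} at $(u,v)=(0,1-\alpha)$, the $u$-equation decouples at linear order to $\eps u_\tau = \eps u_{xx} + \alpha u$ (using $1-u-v\approx \alpha$), whose dispersion relation $\eps(\lambda) = -\eps\nu^2 + \alpha$ in the comoving frame gives pinched double root at spatial rate $\etasc=\sqrt{\alpha}$ and speed $\csc = 2\sqrt{\alpha}$, independent of $\eps$; the $v$-component is slaved and contributes only stable modes. So the candidate speed is $\csc(\alpha)=2\sqrt{\alpha}$, monotonically increasing in $\alpha$ as claimed.

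For existence, I would pass to the traveling-wave ODE in the frame $\xi = x - \csc t$ and exploit the fast-slow structure with small parameter $\eps$. Writing the profile system, the $v$-equation carries the $\eps$ in front of $v_{\xi\xi}$ and $v_\tau$, so on the slow manifold $v$ is algebraically slaved to $u$ via $(1-u-v)v - \alpha v + (1-\eps)(1-u-v)u = \bigo(\eps)$, i.e. essentially $v = 1-\alpha-u + \bigo(\eps)$ away from boundary layers. Substituting into the $u$-equation yields, to leading order, a scalar Fisher-KPP-type front problem $u_{\xi\xi} + \csc u_\xi + \alpha u - (\text{higher order in }u) = 0$ connecting $u=1$ to $u=0$ at the pulled speed $2\sqrt{\alpha}$; this reduced front exists classically and is steep. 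I would then invoke Fenichel's geometric singular perturbation theory \cite{Fenichel} to perturb this singular heteroclinic to a genuine front of the full system for $\eps$ small, tracking the connection between the perturbed slow manifolds near the two equilibria and checking transversality of the relevant invariant manifolds; the boundary layer at $\xi\to+\infty$ where $v$ transitions must be handled, but there $u$ is exponentially small so the layer equation is a simple linear correction.

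The main obstacle will be the marginal stability verification (ii) for $\eps>0$, since \eqref{e: eqn} has no comparison principle and the weighted linearization is a $2\times 2$ system with fast-slow structure. My strategy is a perturbative spectral argument: in the singular limit the eigenvalue problem reduces (on the slow manifold, plus fast boundary-layer corrections) to the scalar KPP eigenvalue problem in the weight $e^{\etasc \xi}$, which is known to be marginally stable with only the resonance at $\lambda=0$. I would then show this structure persists for small $\eps$ — the absence of unstable point spectrum by an Evans-function/Sturm-Liouville perturbation argument, and the absence of essential spectrum in the right half plane by checking the Fredholm borders (the $\eps$-dependent dispersion relations at both asymptotic states) remain in the open left half plane away from the pinched double root, which follows from the computation above plus continuity in $\eps$. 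This is precisely the "general approach for verifying linear determinacy in systems with fast-slow structure" advertised in the introduction, so I expect the bulk of the technical work — and the reason $\eps$ must be taken small — to lie in making this singular-perturbation spectral analysis rigorous, after which the conclusions follow from the black-box theorems of \cite{AveryScheelSelection, AverySelectionRD}.
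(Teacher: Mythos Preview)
Your proposal is correct and follows essentially the same architecture as the paper: verify Hypotheses 1--4 of \cite{AverySelectionRD} by (a) computing the linear spreading speed from the dispersion relation at $(0,1-\alpha)$, (b) constructing the critical front via a Fenichel reduction to a scalar KPP problem on the slow manifold, and (c) establishing marginal spectral stability by a singular-perturbation argument reducing to the known KPP spectral picture, then invoking \cite[Theorem 1]{AverySelectionRD}.

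A few points where the paper is sharper than your sketch. First, your slow-manifold formula $v\approx 1-\alpha-u$ is not correct; solving $g(u,v;\alpha,0)=0$ gives a genuinely quadratic branch $v_\alpha(u)$ (the paper writes it explicitly), and there is no boundary layer at $\xi\to+\infty$ --- the entire front lives on the slow manifold. Second, and more substantively, for the existence step you flag ``transversality'' as the issue, but the paper points out that transversality is routine once the problem is regularized; the delicate part is showing that the \emph{Jordan-block asymptotics} $(\xi+a)e^{-\etasc\xi}$ in the leading edge persist for $\eps>0$, since these are what the hypotheses of \cite{AverySelectionRD} actually require. The paper handles both this and the tracking of eigenvalues near the essential spectrum not via Evans functions but via a \emph{far-field/core decomposition}: one writes the unknown as an explicit far-field ansatz capturing the critical decay plus an exponentially localized core, recovers Fredholm index zero on a weighted space by bordering with the far-field parameter, and closes by the implicit function theorem (for existence) or a Lyapunov--Schmidt reduction (for stability). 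This is the concrete mechanism behind the ``general approach'' you allude to in your last paragraph.
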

	\begin{remark}
		We actually obtain a stronger result than that stated in Theorem \ref{t: secondary invasion}, which we have simplified here for clarify of presentation. Namely, the closeness to the secondary CSC front \eqref{e: theorem 2 closeness} holds in a much stronger norm, and the secondary invasion initial data used in the statement of Theorem \ref{t: secondary invasion} belongs to a class which is open in a natural topology and which all leads to propagation with speed $\csc(\alpha)$; see \cite[Theorem 1]{AveryScheelSelection} for further details. 
	\end{remark}
	
	In the TC extinction regime $\alpha > 1$, CSCs directly invade the cancer free state with speed $\cpc = 2$. 
	\begin{thm}[Primary CSC invasion in TC extinction regime]\label{t: primary CSC invasion}
		Fix $\alpha >1$ and assume $\eps > 0$ is sufficiently small. Given any $\delta > 0$, there exists primary CSC invasion initial data such that the results of Theorem \ref{t: secondary invasion} hold with $(\uscf^*, \vscf^*)$ replaced by a primary CSC front $(\upcf^*, \vpcf^*)$, with the spreading speed now given by $\cpc = 2$, and with the shift function now given by
		\begin{align}
			\sigmapc(t) = \cpc  t - \frac{3}{2 \etapc } \log t + x_\infty + \mathrm{o}(1),
		\end{align}
		where $\etapc = 1$. In particular, in this regime the pure CSC state invades the cancer-free state with asymptotic speed $\cpc = 2$. 
	\end{thm}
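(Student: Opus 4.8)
The plan is to verify the existence and marginal--stability hypotheses of the abstract front--selection result of \cite{AveryScheelSelection} for a primary CSC front, after which the stated convergence and the Bramson logarithmic correction follow exactly as in the proof of Theorem~\ref{t: secondary invasion} (with $\etapc = 1$, so that $\tfrac{3}{2\etapc} = \tfrac32$). The candidate linear spreading speed is read off by linearizing \eqref{e: eqn} at the cancer-free state: the $u$-equation is $u_\tau = u_{xx} + u$, independent of $\eps$, a KPP linearization with spreading speed $2$ and pinched double root at spatial rate $1$; the $v$-equation becomes $\eps v_\tau = \eps v_{xx} + (1-\eps)u + (1-\alpha)v$, whose homogeneous part is, since $\alpha > 1$, strictly damped, with real spatial eigenvalues bounded away from $-1$. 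The asymptotic linearization ahead of any primary CSC front is therefore lower triangular, its dispersion relation factors, and the $u$-block pins the selected speed to $\cpc = 2$ and decay rate $\etapc = 1$ with no contribution from the $v$-block; note $\tfrac12\cpc = \etapc = 1$, consistent with the claimed asymptotics.

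For existence I would use Fenichel's geometric singular perturbation theory \cite{Fenichel}. In the travelling frame $\xi = x - c\tau$ the $v$-component varies on the fast scale $\sqrt\eps$, and the layer problem has critical manifold $\mathcal{M}_0 = \{F_0(u,v) = 0\}$, $F_0(u,v) := (1-u-v)(u+v) - \alpha v$. Setting $s = u + v$ one computes that $\mathcal{M}_0$ is the graph $u = s(\alpha - 1 + s)/\alpha$, $v = s(1-s)/\alpha$ over $s \in [0,1]$, a branch joining $(0,0)$ at $s = 0$ to $(1,0)$ at $s = 1$ with $v \ge 0$ and $1 - u - v = 1 - s > 0$ on $(0,1)$; since $\partial_v F_0 = 1 - 2s - \alpha < 0$ along this branch (using $\alpha > 1$), $\mathcal{M}_0$ is normally hyperbolic of saddle type. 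The reduced flow on $\mathcal{M}_0$ is $u'' + c u' + g_0(u) = 0$ with $g_0(u) = (1 - s(u))u$, a KPP-type nonlinearity ($g_0 > 0$ on $(0,1)$, $g_0(0) = g_0(1) = 0$, $g_0'(0) = 1$, $g_0''(0) = -\tfrac{2\alpha}{\alpha-1} < 0$), which possesses a monotone critical front at $c = 2$ with decay $\xi\re^{-\xi}$ as $u \to 0$. Fenichel's theorems give a persistent slow manifold $\mathcal{M}_\eps$ with reduced flow a small perturbation of the above; moreover the reduced-flow linearization at the origin remains exactly $u'' + c u' + u = 0$ because $(0,0)$ lies on $\mathcal{M}_\eps$ for every $\eps$. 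Standard arguments for the perturbed KPP equation then yield, for small $\eps > 0$, a primary CSC front $(\upcf^*, \vpcf^*)$ with speed exactly $\cpc = 2$ and decay rate exactly $\etapc = 1$.

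For marginal stability I would study the linearization of \eqref{e: eqn} about $(\upcf^*, \vpcf^*)$ in the exponentially weighted space with weight $\re^{\xi}$ ahead of the front and a mild weight behind it (where $(1,0)$ is stable). The asymptotic linearization ahead of the front is again lower triangular, with $u$-block the marginally stable KPP operator and $v$-block strongly damped by $(1-\alpha)/\eps \to -\infty$, so the weighted essential spectrum lies in $\{\Re\lambda \le 0\}$ and meets the origin only through the $u$-block. It remains to rule out unstable and embedded point eigenvalues and to confirm that the front is not pushed, i.e.\ that $\lambda = 0$ supports no eigenfunction in the weighted space. At $\eps = 0$ the eigenvalue problem slaves the $v$-perturbation algebraically and collapses to the linearization of the reduced KPP equation about its critical front, which is classically marginally stable and pulled; for $0 < \eps \ll 1$ the fast--slow structure splits the spectrum into slow eigenvalues shadowing this reduced KPP spectrum and fast eigenvalues governed by the damped $v$-dynamics, which remain far in the left half-plane. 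Marginal stability and pulledness are thus preserved, and \cite{AveryScheelSelection} applies exactly as in Theorem~\ref{t: secondary invasion}.

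The main obstacle is this last step: rigorously controlling the singularly perturbed, non-self-adjoint, only asymptotically triangular linearized operator in the weighted space, uniformly down to $\eps = 0$, and certifying that the perturbation produces no unstable or embedded point spectrum and does not push the front. Making the fast/slow splitting of the spectrum precise --- for instance through an Evans-function factorization into slow and fast factors together with an exchange-lemma estimate --- requires care, in part because $\mathcal{M}_0$ is of saddle (rather than attracting) type; the saving feature is that the genuinely fast PDE direction, namely $v$-diffusion, is strongly damped since $\alpha > 1$, but converting that heuristic into uniform-in-$\eps$ spectral control is the technical heart of the argument.
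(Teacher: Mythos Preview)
Your overall strategy matches the paper's exactly: reduce Theorem~\ref{t: primary CSC invasion} to the hypotheses of \cite{AverySelectionRD} via (i) linear spreading speed computation, (ii) existence of a critical front with the correct weak exponential asymptotics, and (iii) marginal spectral stability in the weighted space, all obtained by Fenichel reduction to a scalar KPP problem on a slow manifold. Your computation of the slow manifold and the reduced nonlinearity $g_0(u)=(1-s(u))u$ is correct and equivalent to the paper's branch $v_\alpha^-(u)$ in Section~\ref{s: primary CSC}.

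Where you differ from the paper is in the techniques used at the two places you leave vague. For existence, you invoke ``standard arguments for the perturbed KPP equation'' to get persistence of the critical front with decay $\xi e^{-\xi}$; this is not quite standard, because the Jordan-block asymptotics are a codimension-one condition and generic perturbation arguments (transverse heteroclinics) only give persistence of \emph{some} front, not the specific weak decay. The paper handles this with a far-field/core decomposition (Section~3.2--3.4): one writes $u = \chi_- + u_c + \chi_+(\xi+a)e^{-\xi}$ with $u_c$ strictly more localized, and solves for $(u_c,a)$ by the implicit function theorem after checking that the joint linearization is Fredholm index~0 with trivial kernel. For spectral stability, you propose an Evans-function factorization with exchange-lemma estimates; the paper instead (Section~4) couples the linearized system to the existence problem, applies a Fenichel reduction directly to this 8-dimensional autonomous system (Proposition~\ref{p: eigenvalue reduction}), and then uses another far-field/core ansatz plus Lyapunov--Schmidt to rule out eigenvalues bifurcating from the essential spectrum at the origin, together with an exponential-dichotomy argument in several rescaled charts for large $|\lambda|$. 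Your Evans-function route is a genuine alternative and could work, but the paper's functional-analytic approach sidesteps the exchange-lemma machinery entirely; the saddle-type normal hyperbolicity you flag as a complication is in fact no obstacle in either approach, since Fenichel persistence requires only that the normal eigenvalues be off the imaginary axis.
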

	
	The nonlinear invasion speeds identified in Theorems \ref{t: primary invasion} through \ref{t: primary CSC invasion} are each the linear spreading speeds associated to the linearization about the respective unstable state. Hence, Theorems \ref{t: primary invasion} through \ref{t: primary CSC invasion} establish that in the parameter regimes considered here, the invasion processes are linearly determined, or \emph{pulled} \cite{vanSaarloos, AveryScheelSelection}. 
	
	Comparing Theorems \ref{t: primary invasion} and \ref{t: secondary invasion}, we see that Theorem \ref{t: primary invasion} gives a stronger description of the resulting invasion process, applying to a larger class of initial data and guaranteeing convergence to a TC front, in an appropriate frame. This global convergence is possible due to the presence of a comparison principle in the $u \equiv 0$ subspace. By contrast, Theorem \ref{t: secondary invasion} applies to smaller sets of initial data, and only establishes that the solution remains close to the CSC front for all time, rather than showing full convergence. We emphasize, however, that the initial data used in Theorem \ref{t: secondary invasion} is not unnaturally constructed, and any sufficiently small perturbations of the same initial data lead to the same result. Furthermore, we expect that the methods of \cite{AveryScheelSelection, AverySelectionRD} could be extended to improve \eqref{e: theorem 2 closeness} to full convergence to the front, although we do not pursue it here. Hence, Theorem \ref{t: secondary invasion} still gives a strong rigorous foundation for predicting the invasion speed into the pure TC state in \eqref{e: eqn}. 
	
	\begin{remark}
		In the introduction, we identified $0 < \alpha < \frac{1}{1+\eps}$ as the staged invasion regime, and $\alpha > \frac{1}{1+\eps}$ as the TC extinction regime. We phrase the rigorous results of Theorems \ref{t: secondary invasion} and \ref{t: primary CSC invasion} slightly differently, for instance first fixing $0 < \alpha < 1$ and then choosing $0 < \eps < \eps_*(\alpha)$ sufficiently small. The threshold $\alpha = \frac{1}{1+\eps}$ is obtained by setting $\csc(\alpha) = \cpt(\alpha, \eps)$, and numerical simulations corroborate that we observe staged invasion for $0 < \alpha < \frac{1}{1+\eps}$ and primary CSC invasion for $\alpha > \frac{1}{1+\eps}$. 
	\end{remark}
	
	\paragraph{Outline.} The remainder of the paper is organized as follows. In Section \ref{s: framework}, we review the framework for front selection developed in \cite{AveryScheelSelection, AverySelectionRD} and translate the assumptions of \cite{AverySelectionRD} into results to be proven here, reducing the proofs of Theorems \ref{t: secondary invasion} and \ref{t: primary CSC invasion} to ODE problems for existence and spectral stability of fronts. In Section \ref{s: existence}, we use geometric singular perturbation theory to reduce the traveling wave equation to a regularized problem on a slow manifold, and use far-field/core decompositions which explicitly capture asymptotics in the leading edge to continue front solutions from the $\eps = 0$ limit. In Section \ref{s: spectral} we again use geometric singular perturbation theory to regularize the associated eigenvalue problem, and use a far-field/core decomposition to exclude eigenvalues bifurcating from the essential spectrum. In Section \ref{s: mass}, we discuss heuristics for computing the total cancer mass as a function of time, compare to numerics, and discuss implications for the tumor invasion paradox. We conclude in Section \ref{s: discussion} with a summary and discussion of extensions of the model. 
	\section{Front selection via marginal stability}\label{s: framework}
	The program for establishing pulled front propagation in \cite{AveryScheelSelection, AverySelectionRD} may be summarized in the following steps.
	\begin{enumerate}
		\item Compute the linear spreading speed associated to the unstable state being invaded.
		\item Verify that there exists a traveling front solution propagating at this linear spreading speed, with generic asymptotics. 
		\item Verify that this traveling front is marginally spectrally stable in an appropriate exponentially weighted space. 
	\end{enumerate}
	These steps are encoded as Hypotheses 1-4 in \cite{AveryScheelSelection} for higher order scalar parabolic equations, and as Hypotheses 1-4 in \cite{AverySelectionRD} for multi-component reaction diffusion systems as considered here. We now formulate these hypotheses in the present context as results to be proven. 
	
	Dividing the second equation by $\eps$, we may rewrite the system \eqref{e: eqn} as 
	\begin{align}
		\u_t = \u_{xx} + F(\u; \alpha, \eps), \quad x \in \R, \quad  t > 0, \label{e: sys divided by eps}
	\end{align}
	where 
	\begin{align*}
		\u = \begin{pmatrix} u \\ v \end{pmatrix}, \quad F(\u; \alpha, \eps) = \begin{pmatrix} (1-u-v)u \\ \frac{1}{\eps} \left[ (1-\eps) (1-u-v)u + (1-u-v)v - \alpha v \right] \end{pmatrix}.
	\end{align*}
	Using this formulation, we now explain how to prove Theorems \ref{t: secondary invasion} and \ref{t: primary CSC invasion} through the framework of \cite{AveryScheelSelection, AverySelectionRD}. 
	
	\subsection{Invasion of TCs by CSCs --- formulation via marginal stability}\label{s: secondary csc formulation}
	We first formulate results establishing the linear spreading speed as well as existence and spectral stability of secondary CSC fronts. The results are analogous for primary CSC invasion, which occurs for $\alpha > 1$, and we summarize them in Section \ref{s: primary CSC formulation}. 
	
	\paragraph{Linear spreading speed.} The speed $\csc(\alpha) = 2 \sqrt{\alpha}$ identified in Theorem \ref{t: secondary invasion} is the \emph{linear spreading speed} associated to the rest state $\u_\mathrm{tc} = (0, 1-\alpha)$, which is the speed at which the level set $\{ u = 1 \}$ propagates in the linearized equation,
	\begin{align}
		\u_t = \u_{xx}+ F'(\u_\mathrm{tc}; \alpha, \eps) \u. \label{e: tc linearized}
	\end{align}
	It has long been known that linear spreading speeds may be calculated via a so-called \emph{pinch point analysis} \cite{vanSaarloos}, with a recent reformulation in \cite{HolzerScheelPointwiseGrowth} on which we rely here. 
	
	To compute the linear spreading speed, we therefore introduce the \emph{dispersion relation}
	\begin{align}
		d_\mathrm{tc}(\lambda, \nu; c, \alpha, \eps) &= \det (\nu^2 + c \nu I + F'(\u_\mathrm{tc}; \alpha, \eps) - \lambda I) \nonumber \\
		&= \det \begin{pmatrix}
			\nu^2 + c \nu + \alpha - \lambda & 0 \\
			\frac{1}{\eps} [(2-\eps) \alpha - 1] & \nu^2 + c \nu + \frac{1}{\eps} (\alpha - 1) - \lambda
		\end{pmatrix}. \label{e: tc dispersion} 
	\end{align}
	obtained by considering \eqref{e: tc linearized} in the moving frame with arbitrary speed $c$, and substituting the Fourier-Laplace ansatz $\u (x,t) \sim e^{\lambda t} e^{\nu x}$. The linear spreading speed is associated to double (in $\nu$) roots of the dispersion relation with $\lambda = 0$ which satisfy a so-called pinching condition; see \cite{HolzerScheelPointwiseGrowth} for background. 
	
	\begin{lemma}[Linear spreading speed into pure TC state.]\label{l: tc linear spreading speed}
		Fix $0 < \alpha < 1$. There exists $\eps_*(\alpha) > 0$ such that if $0 < \eps < \eps_*(\alpha)$, then with $c = \csc (\alpha) = 2 \sqrt{\alpha}$ and $\etasc = \sqrt{\alpha}$, the dispersion relation \eqref{e: tc dispersion} satisfies the following.
		\begin{enumerate}
			\item (Pinched double root) For $\tilde{\nu}, \lambda$ small, we have the expansion
			\begin{align}
				\dtc (\lambda, -\etasc(\alpha) + \tilde{\nu}; \csc(\alpha), \alpha, \eps) = \dtc^{10} \lambda + \dtc^{02} \tilde{\nu}^2 + \mathrm{O} (\lambda^2, \lambda \tilde{\nu}, \tilde{\nu}^3) \label{e: tc double root}
			\end{align}
			where $\dtc^{10}(\alpha, \eps) > 0$ and $\dtc^{02} (\alpha, \eps) < 0$. 
			\item (Minimal critical spectrum) If $\dtc(i \omega, -\etasc(\alpha) + ik; \csc(\alpha), \alpha, \eps) = 0$ for some $\omega, k \in \R$, then $\omega = k = 0$. 
			\item (No unstable essential spectrum) There are no solutions to $\dtc(\lambda, -\etasc(\alpha) + ik; \csc(\alpha), \alpha, \eps) = 0$ with $\Re \lambda > 0$ and $k \in \R$. 
		\end{enumerate}
		Together with the results of \cite{HolzerScheelPointwiseGrowth}, this implies that the linear spreading speed of \eqref{e: tc linearized} is $c = \csc(\alpha)$. 
	\end{lemma}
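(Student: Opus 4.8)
The plan is to exploit the triangular structure of the linearization $F'(\u_\mathrm{tc};\alpha,\eps)$ visible in \eqref{e: tc dispersion}: being lower triangular, its characteristic determinant factors, so that
\begin{align*}
\dtc(\lambda,\nu;c,\alpha,\eps) = \bigl(\nu^2 + c\nu + \alpha - \lambda\bigr)\bigl(\nu^2 + c\nu + \tfrac{1}{\eps}(\alpha-1) - \lambda\bigr) =: d_1(\lambda,\nu;c)\,d_2(\lambda,\nu;c,\alpha,\eps).
\end{align*}
Here $d_1 = 0$ is exactly the dispersion relation of the scalar Fisher--KPP equation $u_t = u_{xx} + \alpha u$ obtained by linearizing the $u$-equation at $\u_\mathrm{tc}$, whose linear spreading speed is the classical $2\sqrt\alpha$ with weight $\sqrt\alpha$; while $d_2 = 0$ is the dispersion relation of the strongly damped mode $v_t = v_{xx} + \frac{1}{\eps}(\alpha-1)v$, which is stable because $0 < \alpha < 1$. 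All three claims then reduce to inspecting the two factors separately.

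For claim (1) I would substitute $\nu = -\etasc + \tilde\nu$ and $c = \csc = 2\sqrt\alpha$; using $\etasc = \sqrt\alpha$ and $\csc = 2\etasc$, the constant and linear-in-$\tilde\nu$ terms cancel in each factor, leaving $d_1 = \tilde\nu^2 - \lambda$ and $d_2 = \tilde\nu^2 - \lambda - A$ with $A := \alpha + \frac{1}{\eps}(1-\alpha) > 0$. Multiplying gives $\dtc = (\tilde\nu^2-\lambda)(\tilde\nu^2-\lambda - A) = A\lambda - A\tilde\nu^2 + \mathrm{O}(\lambda^2,\lambda\tilde\nu,\tilde\nu^3)$, so $\dtc^{10} = A > 0$ and $\dtc^{02} = -A < 0$. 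For claims (2) and (3) I would evaluate the same two factors along $\nu = -\etasc + ik$, $k \in \R$: the cancellation now yields $d_1 = -k^2 - \lambda$ and $d_2 = -k^2 - A - \lambda$. The first vanishes only for $\lambda = -k^2 \le 0$, hence on $\{\lambda = i\omega\}$ only at $\omega = k = 0$ and never for $\Re\lambda > 0$; the second vanishes only for $\lambda = -k^2 - A < 0$, never for $\Re\lambda \ge 0$. Since $\dtc = d_1 d_2$, this yields the minimal-critical-spectrum and no-unstable-essential-spectrum statements, and the three properties together are precisely the input required by \cite{HolzerScheelPointwiseGrowth} to conclude that $\csc(\alpha)$ is the linear spreading speed of \eqref{e: tc linearized}.

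The computation itself is elementary, so there is no genuine analytic obstacle; the points needing care are essentially bookkeeping. First, one must identify the factorization and recognize that the spreading mode is the Fisher--KPP factor $d_1$, while $d_2$ contributes nothing --- it is stable for every $\eps > 0$ when $\alpha < 1$, with a spectral gap $A = \alpha + \frac{1}{\eps}(1-\alpha)$ that is in fact large for small $\eps$. Second, one should check that the sign conditions $\dtc^{10} > 0$, $\dtc^{02} < 0$ really do encode the pinching condition of \cite{HolzerScheelPointwiseGrowth}, i.e.\ that the two roots $\nu_\pm(\lambda) = \tfrac{1}{2}\bigl(-\csc \pm \sqrt{\csc^2 - 4\alpha + 4\lambda}\bigr)$ of $d_1$ colliding at $-\etasc$ approach from opposite half planes as $\Re\lambda \to \infty$, which is immediate since $\nu_\pm(\lambda) \sim \pm\sqrt\lambda$.
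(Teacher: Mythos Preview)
Your proof is correct and follows essentially the same approach as the paper: both exploit the lower-triangular structure of $F'(\u_\mathrm{tc})$ to factor $\dtc = d_1 d_2$, identify $d_1$ as the Fisher--KPP dispersion relation carrying the pinched double root at $(\lambda,\nu)=(0,-\sqrt\alpha)$, and observe that the stable factor $d_2$ contributes no marginal or unstable spectrum since $\alpha<1$. You have simply made the ``direct calculation'' that the paper alludes to fully explicit, and your additional remark verifying the pinching condition via $\nu_\pm(\lambda)\sim\pm\sqrt\lambda$ is a welcome clarification.
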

	\begin{proof}
		This follows from a direct calculation, noticing that the dispersion relation factors as 
		\begin{align}
			\dtc(\lambda, \nu; c, \alpha, \eps) = (d \nu^2 + c \nu + \alpha - \lambda)(d \nu^2 + c \nu + \frac{1}{\eps} (\alpha-1) - \lambda). 
		\end{align}
		The speed $\csc(\alpha) = 2 \sqrt{\alpha}$ is then the linear spreading speed into the $u \equiv 0$ state in the $u$ equation, in absence of coupling the the $v$ equation. Associated to this linear spreading speed is the exponential rate $\nusc (\alpha) = - \sqrt{\alpha} = -\etasc(\alpha)$. Restricting to perturbations which are exponentially localized with rate $e^{-\etasc (\alpha) \xi}$ stabilizes the essential spectrum in the $u$ component, leaving it touching the imaginary axis only at the origin. The second two conditions of Lemma \ref{l: tc linear spreading speed}, again verifiable by direct calculation, guarantee that no other instabilities arise in this weighted space under coupling to the $v$ equation, so that this speed persists as the linear spreading speed. 
	\end{proof}
	
	\paragraph{Front existence with generic asymptotics.} 
	Front solutions satisfy the traveling wave equation
	\begin{align}
		0 = \u_{\xi \xi}+ c \u_\xi + F(\u; \alpha, \eps). \label{e: tw equation}
	\end{align}
	Roots $\nu$ of the dispersion relation with $\lambda = 0$ correspond to eigenvalues of the linearization of the corresponding first-order system about the equilibrium corresponding to $\mathbf{u}_\mathrm{tc}$. The double root \eqref{e: tc double root} of the dispersion relation is then associated to a length 2 Jordan block in this linearization, which generically corresponds to weak exponential decay as captured in the following result. 
	\begin{prop}[Existence of critical secondary CSC front]\label{p: secondary CSC existence}
		Fix $0 < \alpha < 1$. There exists $\eps_*(\alpha) > 0$ such that if $0 < \eps < \eps_*(\alpha)$, then \eqref{e: tw equation} with speed $c = \csc(\alpha)$ admits a secondary CSC front solution $\Uscf (\xi; \alpha, \eps) = (\uscf(\xi; \alpha, \eps), \vscf(\xi; \alpha, \eps))^T$, with Jordan block-type asymptotics as $\xi \to \infty$, 
		\begin{align}
			\begin{pmatrix}
				\uscf(\xi; \alpha, \eps) \\ \vscf(\xi; \alpha, \eps)
			\end{pmatrix} = \begin{pmatrix} 0 \\ 1 - \alpha \end{pmatrix} + [\u_0 \xi + \u_1 + a \u_0] e^{\nusc(\alpha) \xi} + \mathrm{O}(e^{(\nusc(\alpha) - \eta)\xi}), \label{e: secondary csc front asymptotics}
		\end{align}
		for some $\eta > 0$, $\u_0, \u_1 \in \R^2$ and $a \in \R$. In the wake, we have exponential convergence,
		\begin{align}
			\begin{pmatrix}
				\uscf(\xi; \alpha, \eps) \\ \vscf(\xi; \alpha, \eps)
			\end{pmatrix} = \begin{pmatrix} 1 \\ 0 \end{pmatrix}
			+ \mathrm{O} (e^{\eta \xi}), 
		\end{align}
		as $\xi \to -\infty$, for some $\eta > 0$. Moreover, $\Uscf$ depends continuously on $\alpha$ and $\eps$, locally uniformly on $\R$, both $\uscf$ and $\vscf$ are positive, and we have the estimate $\uscf + \vscf > \frac{3 (1-\alpha)}{4}$. 
	\end{prop}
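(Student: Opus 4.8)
The plan is to recast the travelling-wave equation \eqref{e: tw equation} at speed $c = \csc(\alpha) = 2\sqrt{\alpha}$ as a slow--fast first-order system, reduce it by Fenichel theory to a scalar Fisher--KPP travelling-wave problem on a normally hyperbolic slow manifold, build the critical front there at $\eps = 0$, and continue it to $\eps > 0$ by a far-field/core decomposition that explicitly carries the weak (Jordan-block) decay in the leading edge.

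First I would write \eqref{e: tw equation} as $u' = p$, $p' = -cp - (1-u-v)u$, $v' = q$, $\eps q' = -\eps c q - [(1-\eps)(1-u-v)u + (1-u-v)v - \alpha v]$, and rescale $\xi = \sqrt{\eps}\,\zeta$ to obtain standard Fenichel form with fast variables $(v, v_\zeta)$ and slow variables $(u, u_\zeta/\sqrt{\eps})$. The critical manifold is
\[
\mathcal{M}_0 = \{\, q = 0,\ (1-u-v)(u+v) = \alpha v \,\},
\]
and solving the constraint yields a single branch $v = \Phi(u) = \tfrac{1}{2}\big[(1-\alpha) - 2u + \sqrt{(1-\alpha)^2 + 4\alpha u}\big]$ with $\Phi(0) = 1-\alpha$, $\Phi(1) = 0$, so that both rest states $\utc = (0, 1-\alpha)$ and $(1,0)$ lie on $\mathcal{M}_0$ and are joined within it (no fast layers are needed). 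The linearisation of the fast subsystem normal to $\mathcal{M}_0$ has eigenvalues $\pm\big[(1-\alpha)^2 + 4\alpha u\big]^{1/4} \neq 0$ on the relevant compact piece, so $\mathcal{M}_0$ is normally hyperbolic; Fenichel's theorem gives a locally invariant slow manifold $\mathcal{M}_\eps$ for small $\eps > 0$ on which the reduced flow is $u'' + cu' + f_\eps(u, u') = 0$ with $f_\eps = f_0 + \mathrm{O}(\eps)$, $f_0(u) = (1-u-\Phi(u))u$. Since $\{u = 0\}$ is invariant for \eqref{e: tw equation} and $(0,1-\alpha)$ is an exact equilibrium for every $\eps$, one checks $\partial_u f_\eps(0, \cdot) = 1 - \Phi(0) = \alpha$ identically in $\eps$, so the reduced equation has linear spreading speed exactly $\csc(\alpha)$ (consistently with Lemma \ref{l: tc linear spreading speed}); moreover $f_0(u) \le \alpha u$ on $[0,1]$, so $f_0$ is of KPP type.

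At $\eps = 0$ the desired front is the classical critical KPP front of $u'' + \csc(\alpha)u' + f_0(u) = 0$ from $u = 1$ to $u = 0$, whose leading-edge asymptotics $u(\xi) \sim (a_0\xi + a_1)e^{\nusc(\alpha)\xi}$ reflect the length-two Jordan block at $u = 0$; composing with $v = \Phi(u)$ yields the asymptotics \eqref{e: secondary csc front asymptotics}, the remainder coming from the stronger exponential rates. To continue this front to $\eps > 0$ at the same fixed speed I would use a far-field/core decomposition: write the solution as a core part supported on a bounded plus left region plus a far-field part supported on the leading edge; in the far field factor out $e^{\nusc(\alpha)\xi}$, insert the explicit ansatz $\chi(\xi)(a_0\xi + a_1)e^{\nusc(\alpha)\xi}$ (with $\chi$ a cutoff), and require the remainder to decay like $\mathrm{O}(e^{(\nusc(\alpha)-\eta)\xi})$ for a small $\eta > 0$. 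In this weighting the linearisation in the leading edge becomes Fredholm --- the degenerate symbol $(\nu - \nusc(\alpha))^2$ is shifted away from zero --- and the gluing conditions together with the leading-edge equation can be solved by the implicit function theorem in $\eps$, with the amplitude $a_0$ (and an auxiliary parameter) as Lyapunov--Schmidt unknowns. This produces $\Uscf(\xi; \alpha, \eps)$ at speed $\csc(\alpha)$, depending continuously on $(\alpha, \eps)$ locally uniformly in $\xi$, with the stated asymptotics read off from the ansatz and exponential convergence in the wake from the hyperbolic saddle structure at $(1,0)$. Positivity of $\uscf$ and $\vscf$ then follows from positivity/monotonicity of the $\eps = 0$ KPP front together with the uniform closeness and the sign-definite leading-edge expansion; and since on $\mathcal{M}_0$ one has $u + v = u + \Phi(u) =: w(u)$ with $w$ increasing and $w(0) = 1-\alpha$, the $\eps = 0$ front satisfies $\uscf + \vscf \ge 1-\alpha$, so a front $\mathrm{O}(\eps)$-close to it still obeys $\uscf + \vscf > \tfrac{3}{4}(1-\alpha)$ for $\eps$ small.

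The main obstacle is the continuation step. Because the front is pulled, at speed $\csc(\alpha)$ the weak decay rate $\nusc(\alpha)$ is a double root of the dispersion relation (cf.\ \eqref{e: tc double root}), so the stable and unstable manifolds of the rest states meet tangentially and naive transversality/Melnikov persistence arguments fail. The far-field/core decomposition with the explicit $\xi e^{\nusc(\alpha)\xi}$ ansatz is what restores Fredholmness of the relevant operators and simultaneously extracts the precise Jordan-block asymptotics \eqref{e: secondary csc front asymptotics}; the bookkeeping of the weighted spaces and the identification of the Lyapunov--Schmidt conditions (which pick out exactly the critical speed) is where the real work lies. A subsidiary point is tracking that the Fenichel-reduced nonlinearity $f_\eps$ retains the structure needed for the continuation rather than re-deriving the front from scratch, which is handled inside the same functional-analytic framework.
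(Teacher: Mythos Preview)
Your proposal is correct and follows essentially the same route as the paper: Fenichel reduction to a slow manifold, identification of the reduced problem at $\eps=0$ as a Fisher--KPP equation with its critical front, and continuation to $\eps>0$ via a far-field/core ansatz $\chi_-(\xi)+u_c(\xi)+\chi_+(\xi)(\xi+a)e^{\nusc(\alpha)\xi}$ solved by the implicit function theorem after establishing the relevant Fredholm index-$(-1)$ property of $\Akpp$ and bordering with the parameter $a$. Your argument for the lower bound $\uscf+\vscf>\tfrac{3}{4}(1-\alpha)$ via monotonicity of $u+\Phi(u)$ is in fact slightly cleaner than the paper's, which invokes a maximum-principle argument at $\delta=0$ and continuity.
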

	We prove Proposition \ref{p: secondary CSC existence} in Section \ref{s: existence} by perturbing from the singular $\eps = 0$ limit, first using Fenichel's geometric singular perturbation theory to reduce to a regularized problem on an associated slow manifold in Section \ref{s: gspt}. Heteroclinic front solutions in the $\eps = 0$ limit may be found via classical phase space arguments. Showing that these heteroclinics persist for $\eps \neq 0$ is not difficult once the singular perturbation is regularized, since they lie in a transverse intersection of stable and unstable manifolds. The difficult part is to verify that the asymptotics \eqref{e: secondary csc front asymptotics}, which are essential to the spreading dynamics \cite{AveryScheelSelection, vanSaarloos}, persist. We do this by using a far-field/core decomposition which captures leading edge asymptotics via an explicit ansatz, leaving us to solve for an exponentially localized core correction via Fredholm theory and the implicit function theorem, a strategy used to prove persistence of pulled fronts in \cite{AveryGarenaux, avery22, AverySelectionRD}. 
	
	\paragraph{Spectral stability.} Let $A : D(A) \subseteq X \to X$ be a closed, densely defined operator on a Banach space $X$, and let $A^*$ denote its adjoint. The operator $A$ is said to be \emph{Fredholm} if the following conditions hold.
	\begin{itemize}
		\item $\dim \ker A$ and $\dim \ker A^*$ are finite.
		\item The range of $A$ is closed in $X$. 
	\end{itemize}
	The Fredholm index of $A$ is defined to be
	\begin{align}
		\text{ind} (A) = \dim \ker A - \dim \ker A^*. 
	\end{align}
	We say $\lambda \in \C$ is in the \emph{essential spectrum} of $A$ if the operator $A - \lambda$ is either not Fredholm, or is Fredholm with a non-zero index. We say $\lambda$ is in the \emph{point spectrum} of $A$ is $A - \lambda$ is Fredholm with index 0 but not invertible. 
	
	Let 
	\begin{align}
		\Ascf = \partial_\xi^2 + \csc (\alpha) \partial_\xi + F'(\Uscf(\xi; \alpha, \eps); \alpha, \eps) 
	\end{align}
	denote the linearization about a secondary CSC front from Proposition \ref{p: secondary CSC existence}. The coefficients of $\Ascf$ converge exponentially to constant limits as $x \to \pm \infty$. Using the definition above, the essential spectrum of an operator is readily seen to be invariant under compact perturbations and so can be computed from the limiting operators as $\xi = \pm \infty$. In particular, the essential spectrum of $\Ascf$ is unstable due to the instability of the background state $\utc$. 
	
	Spectral stability may be recovered by restricting to perturbations which decay exponentially as $\xi \to \infty$, with optimal rate $e^{-\etasc (\alpha) \xi}$. Hence, we introduce a smooth positive weight function $\omega_*(\xi; \alpha)$ satisfying
		\begin{align}
				\omegasc(\xi; \alpha) = \begin{cases}
						1, &\xi \leq -1, \\ 
						e^{\etasc (\alpha) \xi}, & \xi \geq 1,
					\end{cases} \label{e: omega def}
			\end{align}
		where $\etasc(\alpha) = \sqrt{\alpha}$. Restricting to exponentially localized perturbations is equivalent to considering the conjugated linearization
		\begin{align}
			\Lscf \u := \omegasc \Ascf \left(\frac{1}{\omegasc} \u \right). 
		\end{align}
	We recover the following marginal spectral stability of $\Lscf$. 
	\begin{prop}[Marginal stability of secondary CSC front]\label{p: secondary csc spectral stability}
		Fix $0 < \alpha < 1$. There exists $\eps_*(\alpha) > 0$ such that if $0 < \eps < \eps_*(\alpha)$, then the essential spectrum of $\Lscf : H^2 (\R, \C^2) \subset L^2(\R, \C^2) \to L^2(\R, \C^2)$ is marginally stable, touching the imaginary axis only at the origin via the curve $\{ -k^2 : k \in \R\}$, and otherwise contained in the left half plane. Moreover, $\Lscf$ has no eigenvalues $\lambda$ with $\Re \lambda \geq 0$, and there is no bounded solution to $\Lscf \u = 0$. 
	\end{prop}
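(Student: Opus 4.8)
The plan is to establish the essential-spectrum claim directly from the constant-coefficient limits of $\Lscf$ at $\pm\infty$ using Lemma~\ref{l: tc linear spreading speed}, and then to rule out point spectrum and bounded solutions in $\{\Re\lambda\ge0\}$ by regularising the eigenvalue problem with geometric singular perturbation theory (reducing to a classical Fisher--KPP front at $\eps=0$) together with a far-field/core decomposition that handles the edge of the essential spectrum at $\lambda=0$.

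\textbf{Essential spectrum.} Since the coefficients of $\Ascf$ converge exponentially to constant limits as $\xi\to\pm\infty$ and conjugation by $\omegasc$ alters only the limit at $+\infty$, the Fredholm borders of $\Lscf$ are the curves $\{\lambda:\dtc(\lambda,-\etasc+ik;\csc,\alpha,\eps)=0,\ k\in\R\}$ from $\xi=+\infty$ together with the analogous dispersion curves at $\xi=-\infty$ built from the linearisation of $F$ at the pure CSC state $(1,0)$. At $-\infty$ the pure CSC state is spectrally stable (its linearisation has trace $-\tfrac{1+\alpha}{\eps}<0$ and determinant $\tfrac{\alpha}{\eps}>0$), so those borders lie strictly in $\{\Re\lambda<0\}$. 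The borders from $+\infty$ are exactly the object of Lemma~\ref{l: tc linear spreading speed}: parts (ii)--(iii) give that they meet $i\R$ only at the origin and never enter $\{\Re\lambda>0\}$, and part (i) shows that near the origin the border is a curve of the form $\{-ck^2+\mathrm{O}(k^3):k\in\R\}$ with $c=-\dtc^{02}/\dtc^{10}>0$, i.e. tangent to $i\R$ at $0$ and opening into the left half plane as in the stated curve $\{-k^2:k\in\R\}$. Since $\Lscf-\lambda$ is invertible (hence Fredholm of index $0$) for $\Re\lambda$ large and positive, and no Fredholm border separates that region from the rest of $\{\Re\lambda>0\}$, the index is $0$ throughout $\{\Re\lambda>0\}$; so the essential spectrum is contained in $\{\Re\lambda\le0\}$ and touches $i\R$ only at the origin, as claimed.

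\textbf{Point spectrum away from the origin.} A parabolic a priori estimate on the conjugated operator (its numerical range lies in a left-opening parabola) confines any eigenvalue with $\Re\lambda\ge0$ to a bounded set, and bounded solutions of $\Lscf\u=0$ correspond to $\lambda=0$. Writing $(\Lscf-\lambda)\u=0$ as a first-order system in $\xi$ gives a fast--slow ODE, and Fenichel's theorem reduces it---exactly as in the existence analysis of Section~\ref{s: existence}---to the linearisation of the reduced scalar Fisher--KPP equation $u_\tau=u_{xx}+f(u)$ about its critical front, posed in the exponentially weighted space with weight $e^{\sqrt{f'(0)}\,\xi}$, where $f$ is of KPP type with $f'(0)=\alpha$, so that $\csc(\alpha)=2\sqrt{f'(0)}$. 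For the critical KPP front this weighted linearisation is classically marginally stable: no eigenvalues in $\{\Re\lambda\ge0\}$, generic leading-edge asymptotics $\sim\xi e^{-\sqrt{f'(0)}\xi}$ (consistent with Proposition~\ref{p: secondary CSC existence}), and no bounded element in the kernel (the translational mode is the only bounded kernel element of the unweighted operator, and it grows linearly after conjugation). For $\lambda$ with $\Re\lambda\ge0$ and $\delta\le|\lambda|\le R$, an Evans-function/Fredholm-determinant that depends continuously on $\eps$ through the Fenichel reduction and is nonzero at $\eps=0$ stays nonzero for $\eps$ small, excluding eigenvalues there.

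\textbf{Point spectrum near the origin, and the main obstacle.} The delicate region is $\{|\lambda|<\delta,\ \Re\lambda\ge0\}$, where the essential spectrum touches $i\R$ and compactness fails. Here I would use a far-field/core decomposition: seek $\u=\chi(\xi)\,\psi_{\mathrm{ff}}(\xi;\lambda,\eps)+w$, where $\chi$ is a smooth cut-off supported near $+\infty$, $\psi_{\mathrm{ff}}$ solves the asymptotic ($\xi=+\infty$) equation and decays at the weak spatial rate $\nu(\lambda)=-\etasc+\mathrm{O}(\sqrt\lambda)$ associated to the marginal root in Lemma~\ref{l: tc linear spreading speed}(i), and $w$ lies in an exponentially localised space. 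Substituting into $(\Lscf-\lambda)\u=0$ and using Fredholm theory for $w$ recasts the problem as a finite-dimensional bordered system whose solvability is governed by a determinant $d(\lambda,\eps)$, analytic in $\sqrt\lambda$ on $\{\Re\lambda\ge0\}$; the value $d(0,0)\ne0$ is precisely the pulled-front nondegeneracy condition for the reduced KPP front (equivalently, absence of a resonance at the edge of the essential spectrum), which is classical, so $d(0,\eps)\ne0$ and, by continuity, $d(\lambda,\eps)\ne0$ for $\eps$ small and $\lambda$ near $0$ with $\Re\lambda\ge0$. This rules out eigenvalues near the origin and, at $\lambda=0$, any bounded solution of $\Lscf\u=0$, completing the proof. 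The hard part will be making the far-field/core construction uniform in the singular parameter $\eps$: the localised-core function space, the Fredholm properties of the reference operator, and the expansion of $\psi_{\mathrm{ff}}$ in $\lambda$ must all be controlled uniformly as $\eps\to0$ while remaining compatible with the Fenichel slow-manifold reduction---this is where the bulk of the work in Section~\ref{s: spectral} will go.
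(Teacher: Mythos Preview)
Your treatment of the essential spectrum and of the neighbourhood of $\lambda=0$ is essentially the paper's argument: Fredholm borders from the asymptotic dispersion relations, then a Fenichel reduction of the coupled existence/eigenvalue system to a scalar KPP eigenvalue problem, followed by a far-field/core (Lyapunov--Schmidt) construction that produces a reduced determinant $E(\gamma,\delta)$ with $E(0,0)\ne 0$. Your worry about uniformity in $\eps$ near the origin is in fact handled cleanly by doing the slow-manifold reduction \emph{first} (so the reduced scalar problem depends smoothly on $\delta$), and then running the far-field/core argument on that regular problem.

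The genuine gap is in your large-$|\lambda|$ step. The sentence ``a parabolic a priori estimate on the conjugated operator (its numerical range lies in a left-opening parabola) confines any eigenvalue with $\Re\lambda\ge 0$ to a bounded set'' does not give a bound that is uniform in $\eps$. The zeroth-order coefficients of $\Ascf$ (equivalently of $\Lscf$) are of size $\mathrm{O}(1/\eps)$, so any numerical-range or sectorial estimate only excludes eigenvalues with $|\lambda|\ge C/\eps$. Equivalently, in the $\Bsc(\delta)-K_\delta\lambda$ formulation the mass matrix $K_\delta=\mathrm{diag}(1,\delta^2)$ degenerates, and the generalized eigenvalue problem does not admit a uniform large-$\lambda$ resolvent bound. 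This matters because your next step---the Fenichel reduction of the eigenvalue problem---requires fixing $\Lambda$ \emph{before} choosing $\delta$ small, so it only covers $|\lambda|\le\Lambda$ with $\Lambda$ independent of $\eps$. The region $\Lambda\le|\lambda|\le C/\eps$ is therefore not covered by either argument.

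The paper closes this gap (Proposition~\ref{p: large lambda stability}) by a genuinely two-parameter analysis: setting $\gamma=\lambda^{-1/2}$ and covering the first quadrant of the $(\delta,|\gamma|)$-plane by the charts $\delta=\delta_1|\gamma|$ and $|\gamma|=\gamma_2\delta$, rescaling time in each chart to remove Euler multipliers, and then establishing exponential dichotomies from hyperbolicity of the (slowly varying) frozen-coefficient matrices, with a separate slow-manifold reduction to handle the degenerate corner $\delta_1\to 0$. This is where the real work in the large-$|\lambda|$ regime lies, and your proposal does not yet contain an argument that plays this role.
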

	We prove Proposition \ref{p: secondary csc spectral stability} in Section \ref{p: secondary csc spectral stability} again by perturbing from the $\eps = 0$ limit. This singular limit can again be regularized by passing to a reduced problem on a slow manifold. The essential spectrum may be readily computed from the limiting operators via the Fourier transform. The most difficult part of the proof is to exclude eigenvalues bifurcating from the essential spectrum at the origin. We do this using a functional analytic approach to tracking eigenvalues near and through the essential spectrum developed in \cite{PoganScheel}, an alternative to earlier geometric arguments known collectively as the gap lemma \cite{GardnerZumbrun, KapitulaSandstede}. 
	
	\begin{proof}[Proof of Theorem \ref{t: secondary invasion}]
		Lemma \ref{l: tc linear spreading speed}, Proposition \ref{p: secondary CSC existence}, and Proposition \ref{p: secondary csc spectral stability} precisely show that for $0 < \alpha < 1$ and $0 < \eps < \eps_*(\alpha)$, the system \eqref{e: eqn} satisfies \cite[Hypotheses 1 through 4]{AverySelectionRD}, immediately implying Theorem \ref{t: secondary invasion} by \cite[Theorem 1]{AverySelectionRD}. 
	\end{proof}
	Hence, establishing invasion at the linear spreading speed as stated in Theorem \ref{t: secondary invasion} reduces to verifying the existence and spectral stability conditions of Propositions \ref{p: secondary CSC existence} and \ref{p: secondary csc spectral stability}. We now state the corresponding existence and spectral stability results in the $\alpha > 1$ case, needed to prove Theorem \ref{t: primary CSC invasion}. 
	
	\subsection{Invasion of cancer-free state by CSCs --- formulation via marginal stability}\label{s: primary CSC formulation}
	
	Here we formulate results on the linear spreading speed, front existence, and spectral stability in the $\alpha > 1$ case, analogous to the results of Section \ref{s: secondary csc formulation}. 
	
	The spreading speed of cancer stem cells into the cancer-free state may be predicted from the dispersion relation of the linearization about the cancer-free state, which is given by 
	\begin{align}
		d_0(\lambda, \nu; c, \alpha, \eps) = \det \begin{pmatrix}
			D \nu^2 + c \nu I + F'(0; \alpha, \eps) - \lambda I
		\end{pmatrix}. \label{e: cf dispersion}
	\end{align}
	\begin{lemma}[Linear spreading speed of CSCs into cancer free state.]\label{l: cf linear spreading speed}
	Fix $\alpha > 1$. With $c = \cpc = 2$ and $\etapc = 1$, the dispersion relation \eqref{e: cf dispersion} satisfies the following. 
	\begin{enumerate}
		\item (Pinched double root) For $\tilde{\nu}, \lambda$ small, we have the expansion
		\begin{align}
			d_0 (\lambda, -\etapc + \tilde{\nu}; \cpc, \alpha, \eps) = d_0^{10} \lambda + d_0^{02} \tilde{\nu}^2 + \mathrm{O} (\lambda^2, \lambda \tilde{\nu}, \tilde{\nu}^3) \label{e: cf double root}
		\end{align}
		where $d_0^{10} (\alpha, \eps) > 0$ and $d_0^{02} (\alpha, \eps) < 0$. 
		\item (Minimal critical spectrum) If $d_0(i \omega, -\etapc + ik; \cpc), \alpha, \eps) = 0$ for some $\omega, k \in \R$, then $\omega = k = 0$. 
		\item (No unstable essential spectrum) There are no solutions to $d_0(\lambda, -\etapc + ik; \cpc, \alpha, \eps) = 0$ with $\Re \lambda > 0$ and $k \in \R$. 
	\end{enumerate}
	\end{lemma}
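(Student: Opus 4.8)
The plan is to follow the proof of Lemma~\ref{l: tc linear spreading speed}: everything reduces to a direct computation once one notes that the linearization about the cancer-free state is lower triangular. A short computation gives
\begin{align*}
	F'(0; \alpha, \eps) = \begin{pmatrix} 1 & 0 \\ \frac{1-\eps}{\eps} & \frac{1-\alpha}{\eps} \end{pmatrix},
\end{align*}
so that the dispersion relation \eqref{e: cf dispersion} factors as
\begin{align*}
	d_0(\lambda, \nu; c, \alpha, \eps) = \left( \nu^2 + c \nu + 1 - \lambda \right) \left( \nu^2 + c \nu + \frac{1-\alpha}{\eps} - \lambda \right).
\end{align*}
The first factor is the dispersion relation of the decoupled $u$-equation, a Fisher--KPP equation with unit linear growth rate; its linear spreading speed is $c = 2$ with associated spatial decay rate $\nu = -1$, which is precisely $\cpc = 2$ and $\etapc = 1$. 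For $\alpha > 1$ the second factor is the dispersion relation of a stable mode, since $\frac{1-\alpha}{\eps} < 0$.

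To verify the three conditions I would substitute $c = \cpc = 2$ and $\nu = -\etapc + \tilde\nu = -1 + \tilde\nu$. The first factor becomes exactly $\tilde\nu^2 - \lambda$, and the second becomes $\tilde\nu^2 - \lambda + \frac{1 - \alpha - \eps}{\eps}$. Since the constant $\frac{1-\alpha-\eps}{\eps}$ is strictly negative for every $\eps > 0$ when $\alpha > 1$, multiplying the two factors yields the expansion \eqref{e: cf double root} with
\begin{align*}
	d_0^{10}(\alpha, \eps) = \frac{\alpha - 1 + \eps}{\eps} > 0, \qquad d_0^{02}(\alpha, \eps) = \frac{1 - \alpha - \eps}{\eps} < 0,
\end{align*}
establishing the pinched double root condition. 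For the remaining two conditions I would evaluate the factors along the line $\nu = -1 + i k$, $k \in \R$: the first factor equals $-k^2 - \lambda$, which has no zero with $\Re \lambda > 0$ and, when $\lambda = i\omega$, forces $\omega = k = 0$; the second factor has real part $-k^2 + \frac{1-\alpha-\eps}{\eps} < 0$ and hence never vanishes. Thus the origin is the only critical point of the dispersion relation on this line, and there is no essential spectrum in the open right half-plane. As in Lemma~\ref{l: tc linear spreading speed}, by the criteria of \cite{HolzerScheelPointwiseGrowth} this identifies $\cpc = 2$ as the linear spreading speed of the linearization about the cancer-free state.

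I do not expect a genuine obstacle here: the triangular structure makes the whole computation explicit. The only points needing a little care are the bookkeeping of signs and the observation that, in contrast to Lemma~\ref{l: tc linear spreading speed}, no smallness assumption on $\eps$ is required, because for $\alpha > 1$ the inequality $\frac{1-\alpha}{\eps} - 1 < 0$ holds for all $\eps > 0$, so the stable second factor causes no trouble along the critical line uniformly in $\eps$.
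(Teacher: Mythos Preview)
Your proposal is correct and follows exactly the approach the paper indicates: the paper says the proof is ``a direct calculation, identical to that of Lemma~\ref{l: tc linear spreading speed}'' and omits the details, and you have carried out precisely that calculation. One very minor point: when you say ``the second factor has real part $-k^2 + \frac{1-\alpha-\eps}{\eps} < 0$'', this is literally the real part only when $\Re\lambda = 0$; for condition~(3) with $\Re\lambda > 0$ the real part is $-k^2 - \Re\lambda + \frac{1-\alpha-\eps}{\eps}$, which is of course still strictly negative, so the conclusion stands.
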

	The proof of Lemma \ref{l: cf linear spreading speed} is a direct calculation, identical to that of Lemma \ref{l: tc linear spreading speed}, so we omit the details. 
	\begin{prop}[Existence of critical primary CSC front]\label{p: primary CSC existence}
		Fix $\alpha > 1$. There exists $\eps_*(\alpha) > 0$ such that if $0 < \eps < \eps_*(\alpha)$, then \eqref{e: tw equation} with speed $c = \cpc = 2$ admits a primary CSC front solution $\Upcf (\xi; \alpha, \eps) = (\upcf(\xi; \alpha, \eps), \vpcf(\xi; \alpha, \eps))^T$, with Jordan block-type asymptotics as $\xi \to \infty$, 
		\begin{align}
			\begin{pmatrix}
				\upcf(\xi; \alpha, \eps) \\ \vpcf(\xi; \alpha, \eps)
			\end{pmatrix} = [\u_0^0 \xi + \u_0^1 + a \u_0^0] e^{-\xi} + \mathrm{O}(e^{(-1 - \eta)\xi}), \label{e: primary csc front asymptotics}
		\end{align}
		for some $\eta > 0$, $\u_0^0, \u_0^1 \in \R^2$ and $a \in \R$. In the wake, we have exponential convergence,
		\begin{align}
			\begin{pmatrix}
				\upcf(\xi) \\ \vpcf(\xi)
			\end{pmatrix} = \begin{pmatrix} 1 \\ 0 \end{pmatrix}
			+ \mathrm{O} (e^{\eta \xi}), 
		\end{align}
		as $\xi \to -\infty$, for some $\eta > 0$. 
	\end{prop}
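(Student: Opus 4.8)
The plan is to follow the same strategy as in the proof of Proposition~\ref{p: secondary CSC existence}: use Fenichel's geometric singular perturbation theory to reduce the traveling-wave equation \eqref{e: tw equation} to a regularized problem on a slow manifold (as carried out in Section~\ref{s: gspt}), identify the front in the singular limit $\eps = 0$ by classical phase-plane arguments, and then upgrade robust persistence to persistence with the precise asymptotics \eqref{e: primary csc front asymptotics} via a far-field/core decomposition.

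\emph{Singular reduction.} Writing \eqref{e: tw equation} with $c = \cpc = 2$ as a first-order system in $(u, u_\xi, v, v_\xi)$, the $v$-component is $\eps v_{\xi\xi} + 2\eps v_\xi + (1-\eps)(1-u-v)u + (1-u-v)v - \alpha v = 0$, so at $\eps = 0$ the slow flow is confined to the critical manifold $\mathcal{M}_0 = \{(1-u-v)(u+v) = \alpha v\}$. Setting $s = u + v$, this reads $s^2 + (\alpha-1)s = \alpha u$, and since $\alpha > 1$ the branch through the origin is $s = s(u) := \tfrac{1}{2}\big(-(\alpha-1) + \sqrt{(\alpha-1)^2 + 4\alpha u}\,\big) \in [0,1]$ for $u \in [0,1]$, so $v$ is slaved to $u$ via $v = h(u) := s(u) - u$, with $h(0) = h(1) = 0$ and $h'(0) = 1/(\alpha-1)$. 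On this branch the normal derivative $\partial_v[(1-u-v)(u+v) - \alpha v] = 1 - \alpha - 2 s(u) \leq 1 - \alpha < 0$ is bounded away from zero over a neighborhood of $u \in [0,1]$, so $\mathcal{M}_0$ is uniformly normally hyperbolic, and Fenichel's theorem yields, for $0 < \eps < \eps_*(\alpha)$, a locally invariant slow manifold $\mathcal{M}_\eps$ that is an $\bigo(\eps)$ graph over $(u, u_\xi)$. On $\mathcal{M}_\eps$ the reduced traveling-wave flow is a regular $\bigo(\eps)$ perturbation of $0 = u_{\xi\xi} + 2 u_\xi + f(u)$ with $f(u) := (1 - s(u))u$; one checks $f(0) = f(1) = 0$, $f > 0$ and $f(u) < f'(0)u = u$ on $(0,1)$, and $f'(0) = 1$, so $c = 2 = 2\sqrt{f'(0)}$ is precisely the critical (pulled) speed for this KPP-type nonlinearity. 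Moreover, since the $u$-equation decouples from $v$ at linear order about the origin, the linearization of the reduced flow at $(0,0)$ has the double spatial eigenvalue $-\etapc = -1$ with a nontrivial Jordan block for every $\eps$, matching the pinched double root of Lemma~\ref{l: cf linear spreading speed}.

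\emph{Singular-limit front.} At $\eps = 0$ the reduced equation then has, uniquely up to translation, a critical KPP front $\tilde u(\xi)$ from $(u, u_\xi) = (1,0)$ to $(0,0)$ at speed $2$; it is the one-dimensional unstable manifold of $(1,0)$ landing in the two-dimensional stable manifold of $(0,0)$, hence robust under the $\bigo(\eps)$ perturbation, and by classical theory $\tilde u(\xi) = (A\xi + B)e^{-\xi} + \bigo(\xi e^{-(1+\eta)\xi})$ as $\xi \to \infty$ with $A \neq 0$. Slaving $v = h(\tilde u) + \bigo(\eps)$, the full $\eps = 0$ front $(\tilde u, h(\tilde u))$ already has asymptotics of the form \eqref{e: primary csc front asymptotics} with $\u_0^0 = (A, A/(\alpha-1))^T + \bigo(\eps) \neq 0$, and converges exponentially to $(1,0)$ as $\xi \to -\infty$ by hyperbolicity of that equilibrium.

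\emph{Far-field/core continuation and the main obstacle.} To continue this front to $\eps > 0$ with the asymptotics \eqref{e: primary csc front asymptotics} intact we decompose $\u(\xi) = \chi(\xi)\big[\u_0^0\xi + \u_0^1 + a\,\u_0^0\big]e^{-\xi} + \w(\xi)$, with $\chi$ a smooth cutoff supported in $\xi \geq 1$ and equal to $1$ for $\xi \geq 2$; here $\u_0^0$ spans the kernel and $\u_0^1$ solves the Jordan-chain equation for the limiting operator at $(0,0)$, $a \in \R$ is a free leading-edge amplitude parameter, and $\w$ is sought in a weighted $H^2$ space enforcing decay at rate $e^{-(1+\eta)\xi}$ as $\xi \to \infty$ and boundedness as $\xi \to -\infty$. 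Substituting into \eqref{e: tw equation} (after the Fenichel reduction) yields an equation $\mathcal{G}(\w, a; \alpha, \eps) = 0$ solved at $\eps = 0$ by $(\tilde u, h(\tilde u))$. In the chosen steep weight, $D_\w\mathcal{G}$ at this solution is essentially the linearized critical KPP operator, which is Fredholm of index $-1$ with trivial kernel (the translation mode $\partial_\xi\tilde u \sim \xi e^{-\xi}$ being excluded by the steep weight); augmenting by the far-field amplitude direction $\partial_a$ restores index $0$, and a direct computation shows $D_{(\w,a)}\mathcal{G}$ is invertible. The implicit function theorem then produces a solution $(\w, a)$ depending continuously on $(\alpha, \eps)$ for $\eps$ small, which is the asserted front, with the wake estimate following from hyperbolicity of $(1,0)$ for \eqref{e: tw equation}. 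I expect the main obstacle to be, as in Proposition~\ref{p: secondary CSC existence}, establishing invertibility of $D_{(\w,a)}\mathcal{G}$ uniformly as $\eps \to 0$: this requires compatibility of the Fenichel reduction with the far-field ansatz --- in particular uniform control of $\mathcal{M}_\eps$ and its derivatives out to $\xi = +\infty$, where normal hyperbolicity persists since $1 - \alpha - 2s(u) \to 1 - \alpha < 0$ --- together with a careful treatment of the Fredholm bookkeeping at the critical double root under the singular perturbation. The phase-plane construction of the $\eps = 0$ front and the exponential convergence in the wake are classical.
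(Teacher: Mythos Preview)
Your proposal is correct and follows essentially the same approach as the paper: Fenichel reduction to the appropriate branch of the slow manifold connecting $(0,0)$ and $(1,0)$, verification that the reduced scalar equation is of Fisher--KPP type with critical speed $c=2$, and continuation from $\eps=0$ via the far-field/core decomposition with the Fredholm index-$(-1)$ bordering argument. The paper's own Section~\ref{s: primary CSC} is in fact more terse than your write-up, simply noting that the argument is identical to that for Proposition~\ref{p: secondary CSC existence} once one selects the branch of the critical manifold with $v_\alpha(0)=v_\alpha(1)=0$ and $v_\alpha(u)\geq 0$ on $[0,1]$ for $\alpha>1$; your parametrization via $s=u+v$ makes this selection and the normal-hyperbolicity check particularly transparent.
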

	Let 
	\begin{align}
		\Apcf = \partial_\xi^2 + 2 \partial_\xi + F'(\Upcf(\xi; \alpha, \eps); \alpha, \eps) 
	\end{align}
	denote the linearization about a primary CSC front from Proposition \ref{p: primary CSC existence}. The essential spectrum of $\Apcf$ is again unstable due to the instability of the invaded background state $(u,v) = (0,0)$. We therefore let $\omegapc$ be a smooth positive weight function satisfying
	\begin{align*}
		\omegapc(\xi) = \begin{cases}
			1, & \xi \leq -1, \\
			e^{\xi} &\xi \geq 1,
		\end{cases}
	\end{align*}
	and define
	\begin{align}
		\Lpcf \u = \omegapc \Apcf \left( \frac{1}{\omegapc} \u \right). 
	\end{align}
	\begin{prop}[Marginal stability of secondary CSC front]\label{p: primary csc spectral stability}
		Fix $\alpha > 1$. There exists $\eps_*(\alpha) > 0$ such that if $0 < \eps < \eps_*(\alpha)$, then the essential spectrum of $\Lpcf : H^2 (\R, \C^2) \subset L^2(\R, \C^2) \to L^2(\R, \C^2)$ is marginally stable, touching the imaginary axis only at the origin via the curve $\{ -k^2 : k \in \R\}$, and otherwise contained in the left half plane. Moreover, $\Lpcf$ has no eigenvalues $\lambda$ with $\Re \lambda \geq 0$, and there is no bounded solution to $\Lpcf \u = 0$. 
	\end{prop}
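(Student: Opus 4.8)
The plan is to mirror the proof of Proposition~\ref{p: secondary csc spectral stability} step for step, with the invaded state $\utc$ replaced by the cancer-free state $(0,0)$, the front $\Uscf$ replaced by the primary CSC front $\Upcf$ from Proposition~\ref{p: primary CSC existence}, and the weight $\omegasc$ replaced by $\omegapc$; I only indicate the structure and the points where the $\alpha>1$ geometry enters. The first step is to regularize the singular $\eps\to 0$ limit in the eigenvalue problem $\Lpcf\u=\lambda\u$ exactly as in the existence proof: rewrite the second-order equation as a first-order system, perform the Fenichel rescaling, and apply geometric singular perturbation theory \cite{Fenichel} to reduce to an eigenvalue problem for a regular operator on a slow manifold, for which $\eps$ becomes a regular parameter. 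It then suffices to prove the spectral statements at $\eps=0$ and to argue that they persist for $0<\eps<\eps_*(\alpha)$; since, after regularization, the operator families depend analytically on $\eps$, this persistence follows from robustness of Fredholm properties together with the functional-analytic eigenvalue tracking used below.

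Next I would compute the essential spectrum of $\Lpcf$ from the asymptotic, constant-coefficient operators at $\xi=\pm\infty$. As $\xi\to-\infty$ the front converges to the pure CSC state $(1,0)$, which is stable; the Fourier transform of the limiting operator has spectrum in an open left half-plane, and since $\omegapc\equiv 1$ there, the weight changes nothing. As $\xi\to+\infty$ the front converges to $(0,0)$, whose linearization is unstable only in the $u$-direction; conjugation by $\omegapc\sim\re^{\etapc\xi}$ with $\etapc=1$ shifts the dispersion relation $d_0$ precisely to the critical configuration analyzed in Lemma~\ref{l: cf linear spreading speed}. Parts~(2) and~(3) of that lemma then guarantee that the conjugated limiting essential spectrum touches the imaginary axis only at the origin, tangent there to $\{-k^2:k\in\R\}$ as dictated by the double root \eqref{e: cf double root}, and otherwise lies strictly in the left half-plane. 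The standard characterization of the essential spectrum of asymptotically constant operators (recalled in Section~\ref{s: spectral}) then identifies the essential spectrum of $\Lpcf$ with the union of these two curves, which gives the asserted marginal stability.

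The remaining task---excluding point spectrum with $\Re\lambda\geq 0$ and showing that $\Lpcf\u=0$ has no bounded solution---is where the real work lies, since $\lambda=0$ sits at the edge of the essential spectrum and is exactly where an eigenvalue could detach. Point spectrum with $\Re\lambda\geq 0$ at a definite distance from the origin is excluded as in the secondary case, via a positivity/Sturm--Liouville argument on the $u$-component together with the exponential stability of the limiting operators, with continuity in $\eps$ reducing the claim to the $\eps=0$ case. Near $\lambda=0$ I would follow the functional-analytic method of \cite{PoganScheel}: construct a far-field/core decomposition of the resolvent, using an explicit far-field ansatz that captures the weak algebraic decay associated with the length-two Jordan block, and reduce to a finite-dimensional bordered system whose determinant is an Evans-type function $E(\lambda)$, analytic in $\sqrt{\lambda}$ near the origin (the branch point structure being inherited from the double root). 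That $\Lpcf\u=0$ has no bounded solution follows from the generic Jordan-block asymptotics \eqref{e: primary csc front asymptotics}: the only candidate for a kernel element is the translational mode $\Upcf'$, which decays like $\xi\re^{-\xi}$ as $\xi\to+\infty$ and hence grows linearly once multiplied by $\omegapc$, so $E(0)\neq 0$. To exclude roots of $E$ with $\Re\lambda\geq 0$ near the origin one computes the leading terms of $E$ in $\sqrt{\lambda}$ and checks a sign condition; this is cleanest in the $\eps=0$ limit, where the reduced problem on the slow manifold is a scalar Fisher--KPP-type equation and the computation reduces to the classical marginal-stability analysis of its critical front, and it then perturbs to small $\eps>0$. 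The step I expect to be the genuine obstacle is exactly this last one: verifying the sign of the relevant coefficient in the expansion of $E$ near $\lambda=0$, i.e.\ that no eigenvalue emerges from the branch point into the closed right half-plane. The $\eps=0$ reduction to a scalar KPP problem is what makes the computation tractable.
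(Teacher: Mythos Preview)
Your outline matches the paper's approach closely: the paper itself declares the proof of Proposition~\ref{p: primary csc spectral stability} ``completely analogous'' to that of Proposition~\ref{p: secondary csc spectral stability} and carries out the latter via the same ingredients you name --- GSPT regularization of the eigenvalue problem (Proposition~\ref{p: eigenvalue reduction}), essential spectrum from the limiting operators, and a far-field/core decomposition following \cite{PoganScheel} to exclude eigenvalues emerging from the branch point at $\lambda=0$ (Proposition~\ref{p: scf stability near origin}), with the $E(0)\neq 0$ computation reducing to non-resonance of the scalar KPP linearization.

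There is one genuine omission, however. You write that after the slow-manifold regularization ``it then suffices to prove the spectral statements at $\eps=0$ and to argue that they persist,'' and later that point spectrum away from the origin is handled ``with continuity in $\eps$ reducing the claim to the $\eps=0$ case.'' This is not enough: the Fenichel reduction in Proposition~\ref{p: eigenvalue reduction} is only valid for $|\lambda|\leq\Lambda$ with $\Lambda$ fixed in advance, and the resulting $\eps_*$ depends on $\Lambda$. To get an $\eps_*(\alpha)$ that works for \emph{all} $\lambda$ with $\Re\lambda\geq 0$ you need a separate large-$|\lambda|$ argument, uniform in small $\eps$, \emph{before} reducing to the slow manifold. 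The paper does this in Proposition~\ref{p: large lambda stability} via a fairly delicate blow-up in the $(\delta,|\gamma|)$-plane (with $\gamma^2=1/\lambda$) and exponential dichotomies in several charts; this is not a Sturm--Liouville or simple continuity argument, and without it the $\eps_*$ you obtain could degenerate to zero as $|\lambda|\to\infty$. Once this piece is in place, your outline is correct and coincides with the paper's.
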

	
	Together, Lemma \ref{l: cf linear spreading speed}, Proposition \ref{p: primary CSC existence}, and Proposition \ref{p: primary csc spectral stability} establish that for $\alpha > 1$ and $\eps$ sufficiently small, the primary CSC fronts in \eqref{e: eqn} satisfy \cite[Hypotheses 1-4]{AverySelectionRD}, and so Theorem \ref{t: primary CSC invasion} immediately follows.

	\section{Existence of fronts}\label{s: existence}
	We focus on the staged invasion regime, $0 < \alpha < 1$. Proofs in the TC $\alpha > 1$ are analogous, and we comment on the modifications in Section \ref{s: primary CSC}. Setting $\eps = \delta^2 \geq 0$ and $c = \csc (\alpha) = 2 \sqrt { \alpha}$, we write the traveling wave system \eqref{e: tw equation} as 
	\begin{align}
		0 &= u_{\xi \xi} + 2 u_\xi + f(u,v) \nonumber \\
		0 &= \delta^2  v_{\xi \xi} + \delta^2 c v_\xi + g(u, v; \alpha, \delta), \label{e: tw 2}
	\end{align}
	where
	\begin{align}
		f(u,v) = u(1-u-v), \quad g(u,v;\alpha, \delta^2) = (1-\delta^2) (1-u-v)u + (1-u-v)v - \alpha v. 
	\end{align}
	
	\subsection{Regularization via geometric singular perturbation theory}\label{s: gspt}
	We now reformulate \eqref{e: tw 2} as the first order system
	\begin{align}
		u_\xi &= w \nonumber \\
		w_\xi &= - \frac{1}{d} [cw + f(u,v)] \nonumber \\
		\delta v_\xi &= z \nonumber \\
		\delta z_\xi &= - \frac{1}{d} \left[ \delta c z + g(u,v; \alpha, \delta) \right]. \label{e: slow sys}
	\end{align}
	We refer to \eqref{e: slow sys} as the \emph{slow system}. For $\delta > 0$, we can rescale the spatial variable to $\zeta = \xi/\delta$, finding the \emph{fast system}
	\begin{align}
		u_\zeta &= \delta w \nonumber \\
		w_\zeta &= -  \delta [cw + f(u,v)] \nonumber \\
		v_\zeta &= z \nonumber \\
		z_\zeta &= -  \left[ \delta c z + g(u,v,;\alpha, \delta) \right]. \label{e: fast sys}
	\end{align}
	When $\delta = 0$, the fast system \eqref{e: fast sys} admits a manifold of equilibria $\mathcal{M}_0$ given by
	\begin{align}
		\mathcal{M}_0 = \left\{ (u, w, v, z) \in \R^4 : z = 0, v = v_\alpha (u) \right\}, \label{e: M0}
	\end{align}
	where $v_\alpha(u)$ is found by solving $g(u,v; \alpha, 0) = 0$, and has the explicit formula
	\begin{align}
		v_\alpha(u) = \frac{1-\alpha - 2 u}{2} + \sqrt{ u - u^2 + \frac{1}{4} (1-2u-\alpha)^2}. 
	\end{align}
	We note in particular that $v_\alpha(0) = 1-\alpha$ and $v_\alpha(1) = 0$, so this slow manifold connects to both the pure TC state $(u,v) = (0, 1-\alpha)$ and the pure CSC state $(u,v) = (1,0)$. The quadratic equation $g(u,v;\alpha, 0) = 0$ of course has another branch of solutions; we choose the one for which $v_\alpha(u) \geq 0$ for $0 < u < 1$, $0 < \alpha < 1$, so that the population of TCs remains non-negative.  
	
	From a short calculation, one readily finds that any portion of $\mathcal{M}_0$ which satisfies $u > - \frac{(1-\alpha)^2}{4 \alpha}$ is normally hyperbolic. Fenichel's persistence theorem \cite{Fenichel} implies that any compact, normally hyperbolic portion of $\mathcal{M}_0$ persists as a locally invariant slow manifold $\mathcal{M}_\delta$. Exploiting invariance of $\mathcal{M}_\delta$, we readily arrive at the following reduction.
	
	\begin{prop}[Regularization of existence problem]\label{p: existence regularization}
		Fix $M > 1$, and an integer $k \geq 1$. There exists $\bar{\delta} > 0$ such that all trajectories of \eqref{e: slow sys} with $|\delta| < \bar{\delta}$ satisfying
		\begin{align}
			- \frac{(1-\alpha)^2}{8\alpha} \leq u \leq M, \quad |w| + |v| + |z| \leq M \label{e: existence reduction restrictions}
		\end{align}
		lie in a slow manifold $\mathcal{M}_\delta$, which is normally hyperbolic and locally invariant under the flow of \eqref{e: slow sys}. The slow manifold may be written as a graph
		\begin{align}
			\mathcal{M}_\delta = \left\{ (u,w,v,z) \in \R^4 : v = \psi_v(u,w; \alpha, \delta), z = \psi_z (u,w; \alpha, \delta) \right\}
		\end{align}
		where $\psi_{v/z}$ are $C^k$ in all arguments. It follows from invariance of $\mathcal{M}_\delta$ that along all such trajectories of \eqref{e: slow sys}, $u$ and $w$ solve the reduced system
		\begin{align}
			u' &= w \nonumber \\
			w' &= - \left[ c w + f(u, \psi_v(u, w; \alpha, \delta)) \right]. \label{e: reduced sys}
		\end{align}
		Moreover, $\psi_v$ satisfies the following:
		\begin{itemize}
			\item $\psi_v(u, w; \alpha, 0) = v_\alpha(u)$;
			\item $\psi_v(0,0; \alpha, \delta) = 1- \alpha$ for all $|\delta| < \bar{\delta}$;
			\item and $\psi_v(1, 0; \alpha, \delta) = 0$ for all $|\delta| < \bar{\delta}$. 
		\end{itemize}
	\end{prop}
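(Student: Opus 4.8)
The plan is to treat \eqref{e: fast sys} as a fast-slow system with singular parameter $\delta$, whose critical manifold is $\mathcal{M}_0$ from \eqref{e: M0}, and to read the statement off from Fenichel's invariant manifold theorem \cite{Fenichel}; the only substantive input is a normal hyperbolicity computation, after which the conclusions are bookkeeping.

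First I would check normal hyperbolicity. Setting $\delta = 0$ in \eqref{e: fast sys} freezes $(u,w)$ and leaves the planar fast subsystem $v_\zeta = z$, $z_\zeta = -g(u,v;\alpha,0)$, whose set of equilibria is exactly $\mathcal{M}_0$. The linearization in the $(v,z)$ directions along $\mathcal{M}_0$ is trace-free with eigenvalues $\pm\sqrt{-g_v(u,v_\alpha(u);\alpha,0)}$, so $\mathcal{M}_0$ is normally hyperbolic -- of saddle type, hence carrying both stable and unstable fibrations -- precisely where $g_v < 0$. Writing $g(u,v;\alpha,0) = (1-u-v)(u+v) - \alpha v$ and using the explicit formula for $v_\alpha$, a short computation gives $g_v(u,v_\alpha(u);\alpha,0) = -2\sqrt{\alpha u + \tfrac{1}{4}(1-\alpha)^2}$, which is negative exactly when $u > -\tfrac{(1-\alpha)^2}{4\alpha}$. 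Since $-\tfrac{(1-\alpha)^2}{8\alpha} > -\tfrac{(1-\alpha)^2}{4\alpha}$, the portion of $\mathcal{M}_0$ lying over the compact $(u,w)$-range implicit in \eqref{e: existence reduction restrictions} is uniformly normally hyperbolic; after the standard modification of the vector field outside a slightly larger compact set, this gives a compact normally hyperbolic locally invariant manifold to which Fenichel's theorem applies.

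Next I would apply Fenichel's theorem to produce, for $|\delta| < \bar\delta$, a $C^k$ locally invariant slow manifold $\mathcal{M}_\delta$ that is $C^k$-close to $\mathcal{M}_0$ over the region of interest and equals $\mathcal{M}_0$ at $\delta = 0$. Joint $C^k$ dependence on $(\alpha,\delta)$ follows in the standard way by appending the trivial equations $\delta_\zeta = 0$, $\alpha_\zeta = 0$ to \eqref{e: fast sys} and applying the theorem to the extended system, whose fast (normally hyperbolic) directions remain $(v,z)$. Because $\mathcal{M}_0$ is a graph $v = v_\alpha(u)$, $z = 0$ over the $(u,w)$-plane and $\mathcal{M}_\delta$ is $C^1$-close to it, $\mathcal{M}_\delta$ is again such a graph, $v = \psi_v(u,w;\alpha,\delta)$, $z = \psi_z(u,w;\alpha,\delta)$, with $\psi_{v/z}$ of class $C^k$ in all arguments; substituting this graph into the first two equations of \eqref{e: slow sys} and invoking local invariance gives the reduced system \eqref{e: reduced sys}. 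The accompanying stable/unstable fibration identifies $\mathcal{M}_\delta$ with the maximal locally invariant set in a fixed neighborhood of the normally hyperbolic piece, so any trajectory of \eqref{e: slow sys} confined by \eqref{e: existence reduction restrictions} to that neighborhood lies on $\mathcal{M}_\delta$. Finally, $\psi_v(u,w;\alpha,0) = v_\alpha(u)$ is immediate since $\mathcal{M}_\delta$ reduces to $\mathcal{M}_0$ at $\delta = 0$; and the boundary values hold because $(u,w,v,z) = (0,0,1-\alpha,0)$ and $(1,0,0,0)$ are equilibria of \eqref{e: slow sys} for every $\delta$ (being spatially homogeneous equilibria of \eqref{e: eqn}), lie in the interior of the region \eqref{e: existence reduction restrictions}, and coincide with points of $\mathcal{M}_0$ at $\delta = 0$, so as bounded invariant trajectories they lie on $\mathcal{M}_\delta$, giving $\psi_v(0,0;\alpha,\delta) = 1-\alpha$ and $\psi_v(1,0;\alpha,\delta) = 0$.

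The only genuinely new ingredient would be the normal hyperbolicity calculation, which is elementary once one notices that the expression under the square root in $v_\alpha$ simplifies to $\alpha u + \tfrac{1}{4}(1-\alpha)^2$. What would then require care is technical rather than conceptual: packaging the noncompact graph $\mathcal{M}_0$ into a compact normally hyperbolic manifold via cutoffs, extracting joint $C^k$ smoothness in $(\alpha,\delta)$ from the extended system, and specifying the neighborhood of $\mathcal{M}_0$ within which the claim ``every admissible trajectory lies on $\mathcal{M}_\delta$'' is valid -- in particular keeping away from the second branch of $\{g(\,\cdot\,;\alpha,0) = 0\}$, which fails to be normally hyperbolic yet also meets the region \eqref{e: existence reduction restrictions} near $u = 0$.
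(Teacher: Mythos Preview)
Your proposal is correct and follows essentially the same approach as the paper: the paper simply sets up the fast system, notes that a short calculation shows $\mathcal{M}_0$ is normally hyperbolic for $u > -\tfrac{(1-\alpha)^2}{4\alpha}$, invokes Fenichel's persistence theorem, and states the proposition without further proof. You supply strictly more detail than the paper does, in particular the explicit evaluation $g_v(u,v_\alpha(u);\alpha,0) = -2\sqrt{\alpha u + \tfrac14(1-\alpha)^2}$ and the clean argument that the equilibria $(0,0,1-\alpha,0)$ and $(1,0,0,0)$ persist for all $\delta$ and hence lie on $\mathcal{M}_\delta$, which the paper leaves implicit.
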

	Restricting in \eqref{e: existence reduction restrictions} to trajectories with $u, w$ bounded guarantees that we stay on the slow manifold, since trajectories in the hyperbolic directions are unbounded. Restricting to $u \geq - \frac{(1-\alpha)^2}{8 \alpha}$ guarantees that we stay in the normally hyperbolic portion of the slow manifold. For the rest of the paper, we fix a $k \geq 1$ as referred to in Proposition \ref{p: existence regularization}. 
	
	The fronts we construct in the proof of Proposition \ref{p: secondary CSC existence} will satisfy \eqref{e: existence reduction restrictions}, and so we may find them as solutions to the reduced system \eqref{e: reduced sys}, which we can rewrite as the scalar traveling wave equation
	\begin{align}
		u_{\xi \xi} + c u_\xi + f(u, \psi_v(u,u_\xi; \alpha, \delta)) = 0, \label{e: reduced scalar}
	\end{align}
	which now depends $C^k$-smoothly on the parameter $\delta$. Hence, we have regularized the singular perturbation and can now study the persistence of fronts near the $\delta = 0$ limit to construct fronts to prove Proposition \ref{p: secondary CSC existence}. This analysis follows essentially from \cite[Theorem 2]{AveryScheelSelection}, but we give a detailed proof for completeness. 
	
	\subsection{Far-field/core decomposition for front existence}
	Since $c = \csc = 2 \sqrt{\alpha}$ is the linear spreading speed associated to the TC state in \eqref{e: eqn} in this regime, it follows that the linearization of \eqref{e: reduced sys} at $u = w = 0$ has a Jordan block (one may also readily verify this with a short direct calculation), and hence solutions decay exponentially with an algebraically growing prefactor. We therefore aim to capture solutions to \eqref{e: reduced scalar} with the \emph{far-field/core ansatz}
	\begin{align}
		u(\xi) = \chi_-(\xi) + u_c(\xi) + \chi_+(\xi) (\xi + a) e^{-\etasc(\alpha) \xi}, \label{e: existence ansatz}
	\end{align}
	where $\chi_+$ is a smooth positive cutoff function satisfying
	\begin{align}
		\chi_+(\xi) = \begin{cases}
			1, & \xi \geq 3, \\
			0, & \xi \leq 2,
		\end{cases}
	\end{align}
	and $\chi_-(\xi) = \chi_+(-\xi)$. The core function $u_c(\xi)$ will be required to be exponentially localized, with decay rate faster than $e^{-\etasc (\alpha) \xi}$ as $\xi \to \infty$. The ansatz \eqref{e: existence ansatz} therefore captures convergence to the pure CSC state $(u, v) = (1, \psi_v(1, 0; \alpha, \delta)) = (1,0)$ in the wake of the front, and convergence to the pure TC state $(u,v) = (0, \psi_v(0,0;\alpha, \delta)) = (0,1-\alpha)$ with weak exponential decay in the leading edge. 
	
	At $\delta = 0$, \eqref{e: reduced sys} has the explicit form
	\begin{align}
		0 = u_{\xi \xi} + \csc(\alpha) u_\xi + \tilde{f}(u; \alpha), \label{e: reduced scalar delta 0}
	\end{align}
	with $\tilde{f}(u; \alpha) = f(u, v_\alpha(u))$. One may readily verify that $\tilde{f}(0; \alpha) = \tilde{f}'(1; \alpha) = 0$, and $\tilde{f}'(0; \alpha) = \alpha > 0$ while $\tilde{f}'(1; \alpha) < 0$. Furthermore, it has been shown in \cite[proof of Theorem 1]{Hillen2} that $\tilde{f}$ satisfies the Fisher-KPP condition $\tilde{f}'(u;\alpha) \leq \tilde{f}'(0;\alpha) u$ for $0 \leq u \leq 1$. It then follows from the classical work of Aronson and Weinberger \cite{aronson} that \eqref{e: reduced scalar delta 0} admits a positive, monotone front solution $\ukpp$ satisfying
	\begin{align*}
		\lim_{\xi \to -\infty} \ukpp(\xi) = 1, \quad \lim_{\xi \to \infty} \ukpp(\xi) = 0. 
	\end{align*}
	A slight extension due to Gallay \cite{Gallay,EckmannWayne} guarantees that $\ukpp$ has the form \eqref{e: existence ansatz}, with some coefficient $a = \akpp > 0$. In general, the asymptotics in the leading edge would have the form $(b \xi + a)e^{-\etasc(\alpha) x}$, but we can always guarantee $b = 1$ by translating the front in space. We can linearize the equation at $\delta = 0$ about this solution to find the Fisher-KPP linearization
	\begin{align}
		\Akpp = \partial_\xi^2 + \csc(\alpha) \partial_\xi + \tilde{f}'(\ukpp; \alpha). \label{e: Akpp def}
	\end{align}
	
	Inserting the ansatz \eqref{e: existence ansatz} into \eqref{e: reduced scalar} gives an equation which we aim to solve for the exponentially localized core correction $u_c$ and far-field parameter $a \in \R$. By the above discussion, we have a solution for $\delta = 0$ corresponding to $\ukpp$, and so we hope to continue this solution to nonzero $\delta$ via the implicit function theorem. In order to do this, we need to make our requirement of exponential localization of $u_c$ more precise, and explain how this recovers Fredholm properties of the linearization.
	
	\subsection{Exponentially weighted spaces} 
	Given rates $\eta_\pm \in \R$, we define a smooth positive weight function $\omega_{\eta_-, \eta_+}$ satisfying
	\begin{align}
		\omega_{\eta_-, \eta_+}(\xi) = \begin{cases}
			e^{\eta_- \xi}, & \xi \leq -1, \\
			e^{\eta_+ \xi}, & \xi \geq 1. 
		\end{cases}
	\end{align}
	Given additionally non-negative integers $k$ and $m$ and a field $\F \in \{ \R, \C\}$, we define an exponentially weighted Sobolev space $H^k_{\eta_-, \eta_+}(\R, \F^m)$ as follows
	\begin{align}
		H^k_{\eta_-, \eta_+} (\R, \F^m) = \left\{ u \in H^k_\mathrm{loc}(\R, \F^m) : \| \omega_{\eta_-, \eta_+} u \|_{H^k} < \infty \right\}. 
	\end{align} 
	We equip this space with the norm $\| u \|_{H^k_{\eta_-, \eta_+}} = \| \omega_{\eta_-, \eta_+} u \|_{H^k}$. 
	
	\subsection{Fredholm properties and the implicit function theorem --- proof of Proposition \ref{p: secondary CSC existence}}
	
	We now recall basic spectral and Fredholm properties of $\Akpp$ which we will use in our arguments. 
	\begin{lemma}[Spectral and Fredholm properties of KPP fronts]\label{l: kpp fredholm}
		The Fisher-KPP linearization $\Akpp$ defined in \eqref{e: Akpp def} satisfies the following properties. 
		\begin{itemize}
			\item Spectral stability: considered as an operator $\Akpp: H^2_{0, \etasc} (\R) \subset L^2_{0, \etasc}(\R) \to L^2_{0, \etasc}(\R)$, $\Akpp$ has marginally stable essential spectrum and no unstable point spectrum. 
			\item No resonance at $\lambda = 0$: there is no solution to the equation $\Akpp u = 0$ for which $\omegasc u$ is bounded. 
			\item Fix $\tilde{\eta} > 0$ sufficiently small, and set $\eta = \etasc + \tilde{\eta}$. Then $\Akpp: H^2_{0, \eta} (\R) \subset L^2_{0, \eta} (\R) \to L^2_{0, \eta} (\R)$ is Fredholm with index -1. 
		\end{itemize}
	\end{lemma}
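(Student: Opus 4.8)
The plan is to assemble the four claims from classical facts about the critical Fisher--KPP front $\ukpp$: an explicit computation of Fredholm borders, a symmetrizing conjugation, a two-dimensional ODE classification of $\ker \Akpp$, and a Morse-index count. No new ideas are needed beyond what appears in the work of Sattinger, Gallay, Kapitula--Promislow and Sandstede--Scheel, but the pieces must be organized in the weighted spaces at hand.

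First I would compute the essential spectrum (the Fredholm borders) of $\Akpp$ by passing to the asymptotic, constant-coefficient operators. As $\xi \to -\infty$ the weight is trivial and the limiting operator is $\partial_\xi^2 + \csc(\alpha) \partial_\xi + \tilde{f}'(1;\alpha)$, whose border $\lambda = -k^2 + \ri \csc(\alpha) k + \tilde{f}'(1;\alpha)$ lies in $\{\Re \lambda \le \tilde{f}'(1;\alpha) < 0\}$. As $\xi \to +\infty$, conjugating by $e^{\etasc \xi}$ and using $\csc(\alpha) = 2\etasc$ and $\etasc^2 = \alpha$ turns the limiting operator into $\partial_\xi^2$, with border $\{-k^2 : k \in \R\}$; this yields the first bullet. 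Repeating the $+\infty$ computation with the slightly steeper weight $e^{\eta \xi}$, $\eta = \etasc + \tilde\eta$, moves that border to $\Re \lambda = \tilde\eta^2 - k^2$ with $\Im \lambda$ vanishing only at $k = 0$, so $\lambda = 0$ sits strictly to the left of the $+\infty$ border and strictly to the right of the $-\infty$ border; hence $\Akpp$ is Fredholm on $L^2_{0,\eta}$.

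Next, for the absence of unstable point spectrum I would show that any eigenfunction $v \in L^2_{0,\etasc}$ with eigenvalue $\lambda$, $\Re \lambda \ge 0$, $\lambda \ne 0$, decays exponentially at $-\infty$ — the $-\infty$ border lies in the open left half plane, so the limiting ODE is hyperbolic and the admissible direction decays — and decays faster than $e^{-\etasc \xi}$ at $+\infty$; hence $u := e^{\etasc \xi} v$ lies in $L^2(\R)$ and solves $\mathcal{S} u = \lambda u$ for the self-adjoint operator $\mathcal{S} := \partial_\xi^2 - \alpha + \tilde{f}'(\ukpp;\alpha)$ obtained by conjugating $\Akpp$ with $e^{\csc(\alpha)\xi/2}$. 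The KPP inequality forces the potential $\tilde{f}'(\ukpp;\alpha) - \alpha \le 0$, so $\mathcal{S} \le 0$, which excludes all eigenvalues with $\Re \lambda \ge 0$ except possibly $\lambda = 0$; and $\lambda = 0$ is disposed of directly, since neither generator of $\ker \Akpp$ lies in $L^2_{0,\etasc}$. The same ODE classification gives the resonance claim: the two-dimensional solution space of $\Akpp u = 0$ splits at $-\infty$ into a genuinely growing and a genuinely decaying direction (the roots of $\nu^2 + \csc(\alpha)\nu + \tilde{f}'(1;\alpha)$ are real of opposite sign since $\tilde f'(1;\alpha)<0$), so a solution with $\omegasc u$ bounded must be a multiple of $\ukpp'$; but differentiating the Gallay asymptotics $\ukpp(\xi) = (\xi + a) e^{-\etasc \xi} + \mathrm{O}(e^{-(\etasc+\eta)\xi})$ gives $\ukpp'(\xi) \sim -\etasc\, \xi\, e^{-\etasc \xi}$, so $\omegasc \ukpp' \sim -\etasc \xi$ is unbounded — a contradiction.

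Finally, for the index I would invoke the standard Morse-index formula for second-order operators on exponentially weighted spaces (Palmer, Sandstede--Scheel): writing $\Akpp u = 0$ in first-order form and shifting the asymptotic matrices by the weight rates, at $-\infty$ (rate $0$) the matrix has one eigenvalue in each open half plane, giving a one-dimensional unstable subspace, while at $+\infty$ (rate $\etasc + \tilde\eta$) the double spatial eigenvalue $-\etasc$ is shifted to the double value $\tilde\eta > 0$, giving a two-dimensional unstable subspace; hence $\mathrm{ind}(\Akpp) = 1 - 2 = -1$. I expect the one step needing genuine care to be the reduction in the third paragraph: the space $L^2_{0,\etasc}$ imposes no decay at $-\infty$, and one must use that the left Fredholm border lies strictly in the left half plane to conclude that a putative unstable eigenfunction nonetheless decays there, so that the passage to the self-adjoint operator $\mathcal{S}$ is legitimate; the rest is routine.
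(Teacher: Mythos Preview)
Your proof is correct and follows the same route as the paper's, which simply cites Palmer's theorem for the Fredholm and essential-spectrum claims, a Sturm--Liouville argument from Aronson--Weinberger for point-spectrum stability, and the Wronskian of $\Akpp u = 0$ for the no-resonance statement; you have spelled out each of these pieces explicitly. One small caveat: your exclusion of unstable point spectrum via $\mathcal S \le 0$ relies on the \emph{strong} KPP inequality $\tilde f'(\ukpp) \le \tilde f'(0) = \alpha$, whereas the paper only records the weak form $\tilde f(u) \le \alpha u$ (modulo a typo) --- the strong inequality does in fact hold for this particular $\tilde f$, but strictly speaking you should either verify it directly or fall back on the sign-definite-solution Sturm--Liouville argument that Aronson--Weinberger actually give and the paper cites.
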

	\begin{proof}
		The claims on essential spectrum and Fredholm properties follow from a short calculation using Palmer's theorem \cite{Palmer2}; see any of \cite{SandstedeReview, FiedlerScheel, KapitulaPromislow} for a review of Fredholm properties of linearizations about traveling waves. Stability of point spectrum follows from a Sturm-Liouville type argument; see \cite[Theorem 5.5]{aronson}. One finds the second property by using the fact that $\partial_x \ukpp(\xi) \sim \xi e^{-\etasc(\alpha) \xi}, \xi \to \infty$ solves $\Akpp u = 0$, and examining the Wronskian of this linear ODE. 
	\end{proof}
	
	Inserting the ansatz \eqref{e: existence ansatz} into \eqref{e: reduced scalar}, we arrive at the equation
	\begin{align*}
		F_\alpha (u_c, a; \delta) = 0, 
	\end{align*}
	where
	\begin{multline}
		F_\alpha (u_c, a; \delta) = \partial_\xi^2 (\chi_- + u_c + \chi_+ \phi) + \csc(\alpha) \partial_\xi (\chi_- + u_c + \chi_+ \phi) \\ + f(\chi_- + u_c + \chi_+ \phi, \psi_v(\chi_- + u_c + \chi_+ \phi, \chi_-' + u_c' + \partial_\xi (\chi_+ \phi); \alpha, \delta)), \label{e: F existence def}
	\end{multline}
	with $\phi(\xi; a, \alpha) = (\xi + a) e^{-\etasc(\alpha) \xi}$. 
	\begin{lemma}
		Fix $0 < \alpha < 1$ and $\tilde{\eta} > 0$ small, and set $\eta = \etasc(\alpha) + \tilde{\eta}$. There exists $\delta_0 > 0$ such that the function
		\begin{align*}
			F_\alpha: H^2_{0, \eta} (\R, \R) \times \R \times (-\delta_0, \delta_0) \to L^2_{0, \eta} (\R, \R)
		\end{align*}
		is well defined and $C^k$ in all arguments. 
	\end{lemma}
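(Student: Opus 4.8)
The plan is to write $F_\alpha$ as a sum of three pieces whose analysis is essentially independent --- a bounded linear operator in the core variable, a fixed ``background residual'' obtained by setting $u_c=0$, and a genuinely nonlinear remainder vanishing at $u_c=0$ --- and to check well-definedness and $C^k$ dependence for each. Writing $U(\xi;a,\alpha)=\chi_-(\xi)+\chi_+(\xi)\phi(\xi;a,\alpha)$ for the background profile carried by the ansatz \eqref{e: existence ansatz}, one reads off directly from \eqref{e: F existence def} that $F_\alpha(u_c,a;\delta)=\big(\partial_\xi^2+\csc(\alpha)\partial_\xi\big)u_c+N(u_c,a;\delta)+F_\alpha(0,a;\delta)$, where $N(u_c,a;\delta)=f\big(U+u_c,\,\psi_v(U+u_c,\,\partial_\xi U+\partial_\xi u_c;\alpha,\delta)\big)-f\big(U,\,\psi_v(U,\partial_\xi U;\alpha,\delta)\big)$. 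For the linear term, since $\omega_{0,\eta}$ together with the ratios $\omega_{0,\eta}'/\omega_{0,\eta}$ and $\omega_{0,\eta}''/\omega_{0,\eta}$ are bounded, conjugating $\partial_\xi^2+\csc(\alpha)\partial_\xi$ by $\omega_{0,\eta}$ yields a second order operator with bounded coefficients, so $u_c\mapsto\big(\partial_\xi^2+\csc(\alpha)\partial_\xi\big)u_c$ is bounded $H^2_{0,\eta}(\R,\R)\to L^2_{0,\eta}(\R,\R)$ and hence $C^\infty$.

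The heart of the matter is the background residual $F_\alpha(0,a;\delta)$, where a leading-order cancellation in the leading edge is needed. For $\xi\leq-3$ we have $\chi_-\equiv1$ and $\chi_+\phi\equiv0$, so using $\psi_v(1,0;\alpha,\delta)=0$ (Proposition \ref{p: existence regularization}) and $f(1,0)=0$ the residual vanishes identically; on the compact transition region $-3\leq\xi\leq3$ every term is smooth and bounded. For $\xi\geq3$, where $\chi_+\equiv1$ and $\chi_-\equiv0$, the residual equals $\partial_\xi^2\phi+\csc(\alpha)\partial_\xi\phi+f\big(\phi,\psi_v(\phi,\partial_\xi\phi;\alpha,\delta)\big)$. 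Since $\etasc(\alpha)=\sqrt\alpha$ is the double root of $\nu^2+\csc(\alpha)\nu+\alpha$, the profile $\phi(\xi;a,\alpha)=(\xi+a)e^{-\etasc(\alpha)\xi}$ lies in the kernel of $\partial_\xi^2+\csc(\alpha)\partial_\xi+\alpha$; and because $\psi_v(0,0;\alpha,\delta)=1-\alpha$ for all small $\delta$ and $\partial_vf(0,1-\alpha)=0$, Taylor expansion of $(u,w)\mapsto f(u,\psi_v(u,w;\alpha,\delta))$ about the origin gives $f(u,\psi_v(u,w;\alpha,\delta))=\alpha u+\mathrm{O}((|u|+|w|)^2)$, with the quadratic remainder bounded uniformly for $\delta$ small and $(u,w)$ in a fixed compact set. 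Hence the linear terms cancel and the residual is $\mathrm{O}(\xi^2e^{-2\etasc(\alpha)\xi})$ as $\xi\to+\infty$; multiplying by $\omega_{0,\eta}\sim e^{\eta\xi}=e^{(\etasc(\alpha)+\tilde\eta)\xi}$ and using $\tilde\eta<\etasc(\alpha)$ this stays exponentially decaying, so $F_\alpha(0,a;\delta)\in L^2_{0,\eta}(\R,\R)$. Applying the same estimates termwise to derivatives in $a$ and $\delta$ --- using that $\phi$ is affine in $a$, that $\psi_v$ is $C^k$ in $(u,w,\delta)$ by Proposition \ref{p: existence regularization}, and that $f$ is polynomial --- shows $(a,\delta)\mapsto F_\alpha(0,a;\delta)$ is $C^k$ into $L^2_{0,\eta}(\R,\R)$, locally uniformly.

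For the nonlinear remainder, write $N(u_c,a;\delta)=u_c\cdot G_1+\partial_\xi u_c\cdot G_2$ via the integral form of the mean value theorem, where $G_1,G_2$ are $C^k$ functions of $(U+su_c,\partial_\xi U+s\partial_\xi u_c;\alpha,\delta)$ integrated over $s\in[0,1]$; since $U,\partial_\xi U$ and their derivatives are bounded and $H^2_{0,\eta}(\R)\hookrightarrow W^{1,\infty}(\R)$, the factors $G_1,G_2$ are bounded, continuous, and $C^k$ in the data, while $u_c,\partial_\xi u_c\in L^2_{0,\eta}(\R,\R)$, so $N$ maps into $L^2_{0,\eta}(\R,\R)$, and by the standard theory of substitution (Nemytskii) operators on Sobolev spaces --- the $\psi_v$-composition gaining enough regularity from $H^2\hookrightarrow C^1$ and $H^1\hookrightarrow C^0$ in one dimension --- it is $C^k$ jointly in $(u_c,a,\delta)$; near $\xi\to+\infty$ the decay of the $u_c$-factors places $N$ in the weighted space, and near $\xi\to-\infty$ the weight is trivial. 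To make sense of $\psi_v$, which Proposition \ref{p: existence regularization} provides only on the normally hyperbolic region $u\geq-\tfrac{(1-\alpha)^2}{8\alpha}$, $|w|\leq M$, one either fixes $M$ large and restricts to $(u_c,a)$ in the corresponding bounded set (all the subsequent implicit function argument requires) or extends $\psi_v$ to a globally defined $C^k$ function, which is harmless since the constructed front stays in the valid region. Summing the three contributions and taking $\delta_0\leq\bar\delta$ small enough proves the lemma.

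The main obstacle is the far-field decay of $F_\alpha(0,a;\delta)$: one must verify that the linear contribution of $\chi_+\phi$ through the differential operator cancels \emph{exactly, and uniformly in $\delta$}, against the linear part of $f(\cdot,\psi_v(\cdot,\cdot;\alpha,\delta))$. This relies precisely on the structure preserved by the Fenichel reduction of Proposition \ref{p: existence regularization} --- namely $\psi_v(0,0;\alpha,\delta)=1-\alpha$, so the reduced nonlinearity has linearization $\nu^2+\csc(\alpha)\nu+\alpha$ at the invaded state for every small $\delta$, matching the Jordan-block structure that $\phi$ was built to solve --- together with the choice $\tilde\eta<\etasc(\alpha)$ that keeps the quadratic remainder $\mathrm{O}(\xi^2e^{-2\etasc(\alpha)\xi})$ inside $L^2_{0,\eta}$. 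The boundedness of the linear term and the smoothness of the Nemytskii operator are, by comparison, routine in this one-dimensional $C^k$ setting.
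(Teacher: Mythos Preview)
Your proof is correct and follows essentially the same approach as the paper's: the paper's proof consists of the single observation that $(\xi+a)e^{-\etasc(\alpha)\xi}$ solves the linearized reduced equation about $u=0$, so that all residual terms are either as localized as $u_c$ or quadratic in the far-field term and hence $\mathrm{O}(\xi^2 e^{-2\etasc(\alpha)\xi})$. Your argument is a careful expansion of exactly this idea, with the decomposition into linear part, background residual $F_\alpha(0,a;\delta)$, and nonlinear remainder $N$ making the bookkeeping explicit; in particular you spell out, via $\psi_v(0,0;\alpha,\delta)=1-\alpha$ and $\partial_v f(0,1-\alpha)=0$, why the leading-edge cancellation is uniform in $\delta$, a point the paper leaves implicit.
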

	\begin{proof}
		The main task to prove is that $F_\alpha$ preserves exponential localization. It follows from Lemma \ref{l: cf linear spreading speed} that the function $(\xi + a)e^{-\etasc(\alpha) \xi}$ explicitly solves the linearized equation about $u \equiv 0$. All residual terms from inserting the ansatz \eqref{e: existence ansatz} therefore are either at least as localized as $u_c$, or are generated by quadratic powers of the far-field term, and hence are $\mathrm{O}(\xi^2 e^{-2 \etasc(\alpha) \xi})$, and so belong to $L^2_{0, \eta} (\R, \R)$, as desired. 
	\end{proof}
	
	Note that $F_\alpha(u_c^\mathrm{kpp}, \akpp; 0) = 0$, where 
	\begin{align*}
		u_c^\mathrm{kpp}(\xi) = \ukpp(\xi) - \chi_-(\xi) - \chi_+(\xi)(\xi + \akpp)e^{-\etasc(\alpha) \xi} \in H^2_{0, \eta}. 
	\end{align*}
	The linearization about this solution is $D_{u_c} F(u_c^\mathrm{kpp}, \akpp; 0) = \Akpp$, which is Fredholm with index -1 when considered as an operator $\Akpp : H^2_{0, \eta} (\R) \to L^2_{0, \eta} (\R)$, by Lemma \ref{l: kpp fredholm}. 
	
	Adding the additional parameter $a$ by considering the joint linearization $D_{u_c, a} F(\uckpp, \akpp; 0)$ then increases the Fredholm index by 1, a fact sometimes referred to as the Fredholm bordering lemma. Hence, this joint linearization is Fredholm with index 0, and so is invertible if and only if it has trivial kernel. 
	\begin{lemma}\label{l: existence joint invertibility}
		 Fix $0 < \alpha < 1$ and $\tilde{\eta} > 0$ sufficiently small, and set $\eta = \etasc(\alpha) + \tilde{\eta}$. The joint linearization $D_{u_c, a} F(\uckpp, \akpp; 0) : H^2_{0, \eta} (\R) \to L^2_{0, \eta} (\R)$ is invertible. 
	\end{lemma}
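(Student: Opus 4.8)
The plan is to show that the joint linearization $L := D_{u_c,a} F(\uckpp,\akpp;0)$, which we already know is Fredholm of index $0$, has trivial kernel; invertibility then follows. Explicitly, $L(\hat{u}_c, \hat{a}) = \Akpp \hat{u}_c + \hat{a}\, \partial_a(\chi_+ \phi)|_{a = \akpp}$, and since $\partial_a \phi(\xi;a,\alpha) = e^{-\etasc(\alpha)\xi}$, the second term is $\hat{a}\,\chi_+(\xi) e^{-\etasc(\alpha)\xi} =: \hat{a}\, g(\xi)$, a fixed function that decays like $e^{-\etasc(\alpha)\xi}$ as $\xi \to \infty$. So I must show: if $\hat{u}_c \in H^2_{0,\eta}(\R)$ and $\hat{a} \in \R$ satisfy $\Akpp \hat{u}_c = -\hat{a}\, g$, then $\hat{u}_c = 0$ and $\hat{a} = 0$.

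First I would dispatch the case $\hat{a} = 0$: then $\Akpp \hat{u}_c = 0$ with $\hat{u}_c \in H^2_{0,\eta} \subset H^2_{0,\etasc}$, so $\hat{u}_c$ lies in the kernel of $\Akpp$ on the weight-$\etasc$ space; but by the no-resonance and no-unstable-point-spectrum statements of Lemma \ref{l: kpp fredholm} (recall $\omegasc \hat{u}_c$ bounded would force $\hat{u}_c = 0$, and $\lambda = 0$ is not point spectrum), we get $\hat{u}_c = 0$. For the case $\hat{a} \neq 0$, normalize $\hat{a} = 1$, so $\Akpp \hat{u}_c = -g$ with $\hat{u}_c$ decaying strictly faster than $e^{-\etasc(\alpha)\xi}$ at $+\infty$. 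The strategy is to derive a contradiction by a solvability/Wronskian argument at $+\infty$. The key fact is that $\psi(\xi) := \partial_\xi \ukpp(\xi)$ solves $\Akpp \psi = 0$ and behaves like $\xi e^{-\etasc(\alpha)\xi}$ as $\xi \to \infty$ (the generic, slowly-decaying solution), while the other solution of $\Akpp u = 0$ at $+\infty$ decays like $e^{-\etasc(\alpha)\xi}$. Varying constants, any solution of $\Akpp \hat{u}_c = -g$ has the form $\hat{u}_c = A\psi + B\psi_2 + \hat{u}_c^{\mathrm{part}}$ near $+\infty$, where the particular solution, obtained by integrating $g \sim e^{-\etasc \xi}$ against the second solution, picks up a term of order $\xi e^{-\etasc(\alpha)\xi}$ — i.e., the same slow rate as $\psi$. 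Requiring $\hat{u}_c$ to decay strictly faster than $e^{-\etasc(\alpha)\xi}$ forces the coefficient of the $\xi e^{-\etasc(\alpha)\xi}$ term to vanish, and this is a single nonzero scalar constraint (a Wronskian-type pairing of $g$ against the bounded solution of the adjoint equation), which cannot be satisfied. Concretely, one computes $\langle \psi_{\mathrm{adj}}, g\rangle \neq 0$, where $\psi_{\mathrm{adj}}$ is the unique (up to scalar) bounded solution of $\Akpp^* u = 0$ in the weighted space; the sign/nonvanishing follows because $g$ and $\psi_{\mathrm{adj}}$ are both positive (using positivity/monotonicity of $\ukpp$ and a Sturm–Liouville argument as in \cite[Theorem 5.5]{aronson}).

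The main obstacle I expect is making the $+\infty$ asymptotic bookkeeping rigorous: I need to know precisely the two-dimensional solution space of $\Akpp u = 0$ near $+\infty$ (one solution $\sim \xi e^{-\etasc\xi}$, one $\sim e^{-\etasc\xi}$, from the Jordan block at the linear spreading speed), control the particular solution generated by the inhomogeneity $g$, and identify the obstruction to enhanced decay with a nonzero pairing. This is exactly the Wronskian computation alluded to in the proof of Lemma \ref{l: kpp fredholm} (third bullet, where the index-$(-1)$ count on the $\eta$-weighted space reflects precisely this one-dimensional cokernel), so the cleanest route is: the range of $\Akpp : H^2_{0,\eta}\to L^2_{0,\eta}$ is a codimension-one subspace characterized by the vanishing of a single bounded linear functional $\ell$, and I need $\ell(g) \neq 0$. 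Since $g(\xi) = \chi_+(\xi) e^{-\etasc(\alpha)\xi}$ is, up to the cutoff and exponentially localized corrections, exactly the far-field profile whose failure to lie in the range is what makes $\Akpp$ non-surjective on the larger-weight space, $\ell(g) \neq 0$ is forced; I would verify it by evaluating $\ell$ as an explicit boundary/Wronskian term and checking it is a nonzero multiple of $\lim_{\xi\to\infty} W[\psi_{\mathrm{adj}}, \text{(solution built from } g)](\xi)$, which is nonzero by the positivity input. Once $\ell(g) \neq 0$ is in hand, the equation $\Akpp \hat{u}_c = -g$ has no solution in $H^2_{0,\eta}$, ruling out $\hat{a} \neq 0$ and completing the proof that $\ker L = \{0\}$, hence $L$ is invertible.
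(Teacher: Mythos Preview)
Your computation of the joint linearization is incorrect, and this is not a cosmetic slip: it drives the entire subsequent argument off course. The map $F_\alpha$ is the traveling-wave residual evaluated at the ansatz $u = \chi_- + u_c + \chi_+\phi$, so its Fr\'echet derivative at the front applies the linearized traveling-wave operator $\Akpp$ to the variation of the \emph{full} ansatz. In particular,
\[
D_{u_c,a} F_\alpha(\uckpp,\akpp;0)\cdot(\hat u_c,\hat a)
= \Akpp\bigl(\hat u_c + \hat a\,\partial_a(\chi_+\phi)\bigr)
= \Akpp\bigl(\hat u_c + \hat a\,\chi_+ e^{-\etasc(\alpha)\xi}\bigr),
\]
not $\Akpp\hat u_c + \hat a\,\chi_+ e^{-\etasc(\alpha)\xi}$. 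A quick consistency check flags the problem: your $g(\xi)=\chi_+(\xi)e^{-\etasc(\alpha)\xi}$ does not even lie in the target space $L^2_{0,\eta}(\R)$, since $\omega_{0,\eta}\,g \sim e^{\tilde\eta\xi}$ blows up as $\xi\to\infty$. So your kernel equation $\Akpp\hat u_c=-\hat a\,g$ is not well posed in the functional framework, and the elaborate Wronskian/solvability analysis that follows is aimed at the wrong target.

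Once the linearization is corrected, the proof collapses to the paper's two-line argument: a kernel element $(\hat u_c,\hat a)$ gives a function $w:=\hat u_c+\hat a\,\chi_+ e^{-\etasc(\alpha)\cdot}$ solving $\Akpp w=0$, and $\omegasc w$ is bounded (since $\omegasc\hat u_c$ decays and $\omegasc\cdot\chi_+ e^{-\etasc\cdot}\to 1$). The no-resonance clause of Lemma~\ref{l: kpp fredholm} then forces $w\equiv 0$, and comparing decay rates gives $\hat u_c=0$, $\hat a=0$. No variation-of-constants, positivity, or explicit cokernel functional is needed.
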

	\begin{proof}
		We only need to verify that the kernel is trivial. From a short calculation, we find that an element $(\tilde{u}, \tilde{a}) \in H^2_{0, \eta} (\R) \times \R$ of the kernel satisfies 
		\begin{align*}
			\Akpp (\tilde{u}+ \chi_+ \tilde{a} e^{-\etasc(\alpha) \cdot} ) = 0. 
		\end{align*}
		If one of $\tilde{u}$ or $\tilde{a}$ were nonzero, then we would have a bounded solution to $\Akpp u = 0$, which is excluded by Lemma \ref{l: kpp fredholm}. Hence, the kernel is trivial, as desired. 
	\end{proof}
	
	\begin{proof}[Proof of Proposition \ref{p: secondary CSC existence}]
		By Lemma \ref{l: existence joint invertibility}, we can use the implicit function theorem to construct solutions $\uscf(\xi; \alpha, \delta)$ to the reduced equation \eqref{e: reduced scalar} for $\delta \ll 1$, with the form \eqref{e: existence ansatz}. By Proposition \ref{p: existence regularization}, setting $\vscf(\xi; \alpha, \delta) = \psi_v (\uscf(\xi; \alpha, \delta), \partial_\xi \uscf(\xi; \alpha, \delta); \alpha, \delta)$, we find the desired front solutions to the full system \eqref{e: slow sys}, for which one can readily verify the asymptotics \eqref{e: secondary csc front asymptotics}. 
		
		Positivity can be established using a simple maximum principle argument, as follows. At $\delta = 0$, the front $\ukpp$ satisfies $0 < \ukpp < 1$. Since $\uscf$ depends continuously in $\delta$ and the asymptotics \eqref{e: existence ansatz} imply positive in the leading edge and the wake, if $\uscf$ were not strictly positive for some $\delta_1 > 0$, then there would have to be $\xi_0 \in \R$ and $\delta_2> 0$ such that $\partial_\xi^2 \uscf (\xi_0) > 0$, $\partial_\xi \uscf (\xi_0) = u(\xi_0) = 0$. A simple inspection of the first equation of \eqref{e: tw 2} shows that this cannot happen, so $\uscf$ must remain positive as $\delta$ increases from 0. A similar argument establishes positivity for $\vscf$. At $\delta = 0$, one can use a maximum principle argument to show that $\uscf + \vscf \geq \frac{1-\alpha}{2}$. By continuity, we then obtain $\uscf + \vscf > \frac{3(1-\alpha)}{4}$ for $\delta$ sufficiently small, as desired. 
	\end{proof}
	
	\subsection{Existence of primary CSC fronts --- proof of Proposition \ref{p: primary CSC existence}}\label{s: primary CSC}
	The proof of Proposition \ref{p: primary CSC existence}, establishing existence of pulled primary CSC fronts in the TC extinction regime $\alpha > 1$, is almost identical to that of Proposition \ref{p: primary CSC existence}. The main difference is that for $\alpha > 1$, the physically relevant branch of the slow manifold is now
	\begin{align*}
		\mathcal{M}_0^- = \left\{ (u, w, v, z) \in \R^4 : z = 0, v = v_\alpha^-(u) \right\},
	\end{align*}
	where 
	\begin{align*}
		v_\alpha^- (u) = \frac{1-\alpha-2u}{2} - \sqrt{u-u^2 + \frac{1}{4} (1-2u-\alpha)},
	\end{align*}
	satisfies $v_\alpha^-(0) = v_\alpha^-(1) = 0$, and $v_\alpha^-(u) \geq 0$ for $0 \leq u \leq 1$, $\alpha > 1$. The slow manifold $\mathcal{M}_0^-$ is still normally hyperbolic for $u > -\frac{(1-\alpha)^2}{4 \alpha}$, and the analysis of \cite{Hillen2} again establishes that the resulting reduced equation on the slow manifold is of Fisher-KPP type for $\delta = 0$. The remaining details are completely analogous to the proof of Proposition \ref{p: secondary CSC existence}. 
	
	\section{Spectral stability of CSC fronts }\label{s: spectral}
	
	We again focus on the staged invasion regime, $0 < \alpha < 1$. The proof of Proposition \ref{p: primary csc spectral stability} in the CSC dominated regime is completely analogous. We again set $\eps = \delta^2 \geq 0$ and $c = \csc(\alpha) = 2 \sqrt{\alpha}$. 
	
	In Section \ref{s: framework}, we divided the second equation in \eqref{e: eqn} by $\eps$ to obtain the form \eqref{e: sys divided by eps}, so that we could directly apply the results of \cite{AverySelectionRD}. In actually proving Proposition \ref{p: secondary csc spectral stability}, it will be more convenient to work with the linearization of \eqref{e: eqn} about the front solution from Proposition \ref{p: secondary CSC existence}, which leads to the eigenvalue problem
	\begin{align*}
		(\Bsc(\delta) - K_\delta \lambda) \begin{pmatrix} \tilde{u} \\ \tilde{v} \end{pmatrix} = 0, \label{e: stability problem}
	\end{align*}
	where
	\begin{align}
		\Bsc(\delta) = \begin{pmatrix}
			\partial_\xi^2 + \csc(\alpha) \partial_\xi + \partial_u f(u, v) & \partial_v f(u, v) \\
			\partial_u g(u, v; \alpha, \delta) & \delta^2 \partial_\xi^2 + \delta^2 \csc (\alpha) \partial_\xi + \partial_v g(u, v; \alpha, \delta)
		\end{pmatrix},
	\end{align}
	with $(u,v) = (\uscf, \vscf)$, and 
	\begin{align}
		\quad K_\delta = \begin{pmatrix}
			1 & 0 \\
			0 & \delta^2
		\end{pmatrix}
	\end{align}
	One readily obtains the following equivalence to the formulation of Section \ref{s: framework}. 
	\begin{lemma}
		Spectral properties of $\Ascf$ and $\Bsc$ are equivalent. That is, $\lambda$ is in the essential spectrum of $\Ascf$ if and only if $\Bsc(\delta) - K_\delta \lambda$ is not Fredholm with index 0, and $\lambda$ is in the point spectrum of $\Ascf$ if and only if $\Bsc(\delta) - K_\delta \lambda$ has a nontrivial kernel. 
	\end{lemma}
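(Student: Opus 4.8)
The plan is to exhibit $\Ascf - \lambda I$ and $\Bsc(\delta) - K_\delta \lambda$ as differing only by left multiplication by a fixed bounded invertible operator. Recall that in passing from \eqref{e: eqn} to the form \eqref{e: sys divided by eps} the second component of the nonlinearity $F(\cdot;\alpha,\eps)$ is obtained from $g$ by dividing by $\eps = \delta^2$, while the first component is exactly $f(u,v) = (1-u-v)u$. Consequently the first rows of $\Ascf$ and of $\Bsc(\delta)$ literally coincide (both equal $\partial_\xi^2 + c\partial_\xi + \partial_u f$ in the first entry and $\partial_v f$ in the second), and the second row of $\Ascf$ is precisely $\delta^{-2}$ times the second row of $\Bsc(\delta)$. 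Writing $D_\delta = \diag(1,\delta^{-2})$ for the constant multiplication operator, a direct check gives $D_\delta \Bsc(\delta) = \Ascf$ and $D_\delta K_\delta = I$, hence
\begin{align*}
D_\delta\big( \Bsc(\delta) - K_\delta \lambda \big) = \Ascf - \lambda I \qquad \text{for all } \lambda \in \C.
\end{align*}

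For a fixed $\delta \neq 0$, the operator $D_\delta$ is bounded and boundedly invertible on $L^2(\R,\C^2)$, and it restricts to an isomorphism of $H^2(\R,\C^2)$. One should also note that $\Bsc(\delta) - K_\delta\lambda$ and $\Ascf - \lambda$ have the same domain $H^2(\R,\C^2)$, since the $(2,2)$ entry of $\Bsc(\delta)$ still carries the second-order term $\delta^2\partial_\xi^2$ (this is where using the undivided system costs nothing). Left composition with an isomorphism leaves the kernel unchanged, maps the range homeomorphically (in particular preserving closedness), and induces an isomorphism of cokernels, hence also of $\ker(\cdot)^*$. Therefore $\Bsc(\delta) - K_\delta\lambda$ is Fredholm if and only if $\Ascf - \lambda$ is, with the same index, and the two operators have kernels of equal dimension. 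Translating this through the definitions of essential and point spectrum recorded above yields exactly the asserted equivalence.

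I do not expect any genuine obstacle here; the content is the bookkeeping identity $D_\delta \Bsc(\delta) = \Ascf$ together with the elementary stability of Fredholm properties under composition with an isomorphism. The only points that warrant an explicit check are that $\partial_u f$ and $\partial_v f$ really match the corresponding entries of $F'(\Uscf;\alpha,\eps)$ (immediate from $f(u,v)=(1-u-v)u$) and that the $\eps$-division used to obtain \eqref{e: sys divided by eps} is undone exactly by $D_\delta$ on both the nonlinear term and the spectral parameter, i.e.\ $D_\delta K_\delta = I$.
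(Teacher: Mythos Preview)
Your proposal is correct and is precisely the verification the paper has in mind: the paper does not give a proof, merely stating that one ``readily obtains'' the equivalence, and your identity $D_\delta(\Bsc(\delta)-K_\delta\lambda)=\Ascf-\lambda I$ with $D_\delta=\diag(1,\delta^{-2})$ is exactly the bookkeeping that makes this immediate. Your checks that the domains coincide (both equal $H^2(\R,\C^2)$ since $\delta\neq 0$) and that left composition with a bounded isomorphism preserves Fredholm index and kernel are the right points to make explicit.
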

		
	Throughout this section, we let $c = \csc(\alpha)$, and we rewrite \eqref{e: stability problem} as the first order system
	\begin{align}
		\partial_\xi \tilde{u} &= \tilde{w} \nonumber \\
		\partial_\xi \tilde{w} &= -  [c \tilde{w} + f_u \tilde{u}+ f_v \tilde{v}- \lambda \tilde{u}] \nonumber \\
		\delta \partial_\xi \tilde{v} &= \tilde{z} \nonumber \\
		\delta \partial_\xi \tilde{z} &= -  [\delta c \tilde{z} + g_u \tilde{u} + g_v \tilde{v} - \delta^2 \lambda \tilde{v}], \label{e: large lambda lin 1}
	\end{align}
	where $f_u = \partial_u f(u,v)$, with corresponding notation for the other derivatives of the nonlinearity. 
	
	We divide the spectral stability analysis into two regimes: one where $|\lambda|$ is large, independently of $\delta$, and one where $\lambda$ is bounded. 
	
	\begin{prop}[Stability for $|\lambda|$ large]\label{p: large lambda stability}
		Fix $0 < \alpha < 1$, let $c = \csc(\alpha)$, and let $(u,v)$ denote a secondary CSC front constructed in Proposition \ref{p: secondary CSC existence}. There exist constants $\Lambda_0, \delta_0 > 0$ and $\frac{\pi}{2} < \theta_0 < \pi$ such that the equation $(\Bsc(\delta) - \lambda K_\delta) \tilde{\u} = 0$ has no bounded solutions provided $|\delta| < \delta_0$ and $\lambda$ satisfies
		\begin{align}
			\lambda \in \Omega_0 := \{ \lambda \in \C: |\lambda| \geq \Lambda_0, |\Arg \lambda | \leq \theta_0 \}. 
		\end{align}
	\end{prop}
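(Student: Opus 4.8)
The plan is to treat the large-$\lambda$ regime by an energy-type argument that exploits the fast--slow structure, essentially showing that for $|\lambda|$ large in the sectorial region $\Omega_0$, the linearized operator is dominated by its constant-coefficient, $\lambda$-dependent part, which is invertible. First I would rescale the first-order system \eqref{e: large lambda lin 1} in the fast variable $\zeta = \xi/\delta$, so that the $v$-equations become an $\mathrm{O}(1)$ ODE block while the $u$-equations pick up factors of $\delta$; alternatively, and perhaps more cleanly, I would work directly with the second-order eigenvalue problem $(\Bsc(\delta) - \lambda K_\delta)\tilde{\u} = 0$ and test the two components against $\bar{\tilde u}$ and $\bar{\tilde v}$ respectively, integrating by parts. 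The point is that the $u$-equation reads $\partial_\xi^2 \tilde u + c \partial_\xi \tilde u + f_u \tilde u + f_v \tilde v = \lambda \tilde u$, and the $v$-equation reads $\delta^2 \partial_\xi^2 \tilde v + \delta^2 c \partial_\xi \tilde v + g_u \tilde u + g_v \tilde v = \delta^2 \lambda \tilde v$. In the $v$-equation, when $\delta$ is small the terms $g_u \tilde u + (g_v - \delta^2 \lambda)\tilde v$ must balance the small-coefficient diffusion term; since $g_v$ converges to $\partial_v g$ at the asymptotic states, which is negative and bounded away from zero (this is exactly the normal hyperbolicity of the slow manifold used in Proposition \ref{p: existence regularization}), the operator $\delta^2\partial_\xi^2 + \delta^2 c\partial_\xi + g_v - \delta^2\lambda$ is boundedly invertible on $L^2$ uniformly in small $\delta$ and in $\lambda \in \Omega_0$, with inverse of size $\mathrm{O}(1)$ — here we need $\Re(\delta^2\lambda) $ and the sectoriality to ensure $g_v - \delta^2\lambda$ stays in a half-plane away from $0$, which is why the sector angle $\theta_0 < \pi$ and the threshold $\Lambda_0$ enter.

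The key steps, in order, are: (i) solve the $v$-equation for $\tilde v$ in terms of $\tilde u$, writing $\tilde v = -(\delta^2\partial_\xi^2 + \delta^2 c\partial_\xi + g_v - \delta^2\lambda)^{-1}(g_u \tilde u)$, and record that this solution operator has norm $\mathrm{O}(1)$ (indeed $\mathrm{O}(1/|g_v|_{\min})$) on $L^2$, uniformly in $\delta$ small and $\lambda\in\Omega_0$; (ii) substitute into the $u$-equation to obtain a scalar equation $\partial_\xi^2\tilde u + c\partial_\xi \tilde u + (f_u - \lambda)\tilde u + f_v\, \mathcal{S}_\delta(\lambda)\tilde u = 0$ where $\mathcal{S}_\delta(\lambda)$ is the bounded operator from (i); (iii) observe that $f_u$ and $f_v$ are bounded and that $\mathcal{S}_\delta(\lambda)$ is bounded uniformly, so the zeroth-order coefficient is a bounded perturbation of $-\lambda$; then for $|\lambda| \ge \Lambda_0$ large and $|\Arg\lambda|\le\theta_0$, the operator $\partial_\xi^2 + c\partial_\xi - \lambda$ is invertible with $\mathrm{O}(1/|\lambda|)$ inverse (a standard resolvent estimate for a constant-coefficient sectorial operator in an exponentially weighted space, after conjugating by $\omegasc$ exactly as in the definition of $\Lscf$), and a Neumann series argument shows the full $u$-equation has only the trivial bounded solution; (iv) conclude that $\tilde u \equiv 0$, hence $\tilde v \equiv 0$ by (i), giving the claim. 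A small wrinkle: the problem is posed for bounded solutions rather than $L^2$ solutions, so I would either run the argument in the exponentially weighted space where bounded solutions of the eigenvalue equation correspond to genuine resolvent elements (using that at $\xi \to +\infty$ the front approaches $\utc$ and the conjugation by $\omegasc$ moves the relevant spectrum into the left half-plane, per Lemma \ref{l: tc linear spreading speed}), or use exponential dichotomies for the first-order system \eqref{e: large lambda lin 1} and track how the (un)stable subspaces depend on $\lambda$ and $\delta$.

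The main obstacle I anticipate is making the uniform invertibility of the $v$-block fully rigorous as an operator bound that is uniform in \emph{both} $\delta\to 0$ \emph{and} $\lambda\in\Omega_0$, because the diffusion coefficient $\delta^2$ degenerates: one cannot simply use elliptic regularity with constants depending on $\delta$. The right way is to note that $\delta^2\partial_\xi^2 + \delta^2 c\partial_\xi - w(\xi)$ with $\Re w \ge \kappa > 0$ is invertible on $L^2$ with inverse norm $\le 1/\kappa$ for \emph{any} $\delta$ — this is just the statement that its numerical range lies in $\{\Re z \le -\kappa\}$, obtained by testing against $\bar{\tilde v}$ and taking real parts, which kills the diffusion and drift terms (the latter after integration by parts, using that $\delta^2 c \int (\partial_\xi \tilde v)\bar{\tilde v}$ is purely imaginary) — so no $\delta$-dependent constant appears. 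Then one only needs $\Re(g_v - \delta^2\lambda) \le -\kappa$ uniformly, which holds for $|\Arg\lambda|\le\theta_0<\pi$ once $\delta$ is small, since $g_v \le -\kappa_0 < 0$ on the front (from normal hyperbolicity / the estimate $\uscf+\vscf > \tfrac{3(1-\alpha)}{4}$, which controls $\partial_v g = -u + (1-u-2v) - \alpha$ and can be checked to be negative along the constructed front) and $\Re(\delta^2\lambda) \ge 0$ on the sector. Once this uniform bound is in hand, the rest is a routine perturbation argument, and I would not expect surprises in steps (ii)–(iv).
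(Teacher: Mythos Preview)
Your approach differs substantially from the paper's. The paper does not use energy or resolvent estimates at all: it sets $\lambda = 1/\gamma^2$, rescales the first-order system \eqref{e: large lambda lin 1}, and covers a neighbourhood of the origin in the $(\delta,|\gamma|)$-plane with projective charts $\delta = \delta_1|\gamma|$ and $|\gamma| = \gamma_2\delta$ (note $\delta_1 = \delta\sqrt{|\lambda|}$, so these charts are exactly the regimes $\delta^2|\lambda|$ small, moderate, large). In each chart the leading-order coefficient matrix is hyperbolic and slowly varying in $\xi$, so exponential dichotomies follow from a frozen-coefficients lemma together with roughness; the degenerate chart $\delta_1 \to 0$ is handled by a slow-manifold reduction as in Proposition~\ref{p: eigenvalue reduction}. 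A compactness argument over the remaining interval of $\delta_1$ glues the charts together.

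Your proposal has a genuine gap at precisely the point the charts are designed to handle. The claim that ``$\Re(\delta^2\lambda) \ge 0$ on the sector'' is false: the Proposition requires $\theta_0 > \pi/2$, so for $\Arg\lambda$ near $\pm\theta_0$ one has $\Re\lambda < 0$, and because $|\lambda|$ is unbounded in $\Omega_0$ while $|\delta| < \delta_0$ is merely bounded, $\delta^2\Re\lambda$ can be arbitrarily negative. In the regime $\delta^2|\lambda|\,|\cos\theta_0| \gtrsim |g_v|_{\min}$ the potential $g_v - \delta^2\lambda$ acquires positive real part and your numerical-range bound no longer yields a uniform $O(1)$ inverse for the $v$-block. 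The repair is to split: for $\delta^2|\lambda|$ bounded, choose $\theta_0$ close enough to $\pi/2$ that $\delta^2|\Re\lambda|$ stays below $|g_v|_{\min}$ and your argument goes through; for $\delta^2|\lambda|$ large, the operator $\delta^2(\partial_\xi^2 + c\partial_\xi - \lambda)$ dominates $g_v$ and a standard sectorial estimate gives an inverse of size $O(1/(\delta^2|\lambda|))$, after which your Neumann series in the $u$-equation still closes. Once you make this split, the two routes do comparable bookkeeping --- your resolvent approach is a bit more elementary, while the paper's dichotomy argument interfaces more directly with the slow-manifold machinery already developed for the existence problem.
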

	We prove Proposition \ref{p: large lambda stability} in Section \ref{s: large lambda}. To handle the regime $|\lambda| \leq |\Lambda_0|$, we will use a slow manifold reduction to regularize the singular perturbation in \eqref{e: large lambda lin 1}. Since the linearized system \eqref{e: large lambda lin 1} is non-autonomous, depending on $\xi$ through $u$ and $v$, we first couple \eqref{e: large lambda lin 1} to the existence problem for the secondary CSC front, obtaining the autonomous 8-dimensional system
	\begin{align}
			u_\xi &= w \nonumber, &\partial_\xi \tilde{u} &= \tilde{w}  \\
			w_\xi &= - \frac{1}{d} [cw + f(u,v)] \nonumber, &\partial_{\xi} \tilde{w} &= - [c\tilde{w}+ f_u \tilde{u} + f_v \tilde{v} - \lambda \tilde{u}], \\
			\delta v_\xi &= z \nonumber, &\delta \partial_\xi \tilde{v} &= \tilde{z},  \\
			\delta z_\xi &= - \frac{1}{d} \left[ \delta c z + g(u,v; \alpha, \delta) \right], &\delta \partial_\xi \tilde{z} &= -[\delta c \tilde{z}+ g_u \tilde{u} + g_v \tilde{v}- \delta^2 \lambda \tilde{v}] \label{e: slow existence stability coupled}
	\end{align} 
	Rescaling to find the corresponding fast system and setting $\delta = 0$, we find the slow manifold
	\begin{align}
		\tilde{\mathcal{M}}_0 = \left\{ (u,w, v, z, \tilde{u}, \tilde{w}, \tilde{v}, \tilde{z}) \in \C^8 : (u,v,w,z) \in \mathcal{M}_0, \tilde{z} = 0, \tilde{v} = - \frac{g_u (u,v;\alpha, 0)}{g_u(u,v;\alpha, 0)} \tilde{u} \right\},
	\end{align}
	where $\mathcal{M}_0$ is given by \eqref{e: M0}. We establish the persistence of this slow manifold to $\delta \neq 0$ in the following proposition. 
	
	\begin{prop}[Reduction for $|\lambda|$ bounded]\label{p: eigenvalue reduction}
		Fix $0 < \alpha < 1$, $\Lambda > 0$ and $M > 1$. Then there exists $\overline{\delta} > 0$ so that the following holds. All bounded trajectories of \eqref{e: slow existence stability coupled} with $|\delta| < \overline{\delta}$ and $|\lambda| < \Lambda$ satisfying
		\begin{align}
			-\frac{(1-\alpha)^2}{8 \alpha} \leq u \leq M, \quad |w| + |v| + |z| \leq M. 
		\end{align}
		lie in a slow manifold which is normally hyperbolic and locally invariant under the flow of \eqref{e: slow existence stability coupled} and may be written as a graph
		\begin{align}
			v &= \psi_v (u, w; \alpha, \delta), \nonumber \\
			z &= \psi_z (u,w; \alpha, \delta), \nonumber \\
			\tilde{v}&= \tilde{\psi}_v (u,w; \alpha, \lambda, \delta) \cdot (\tilde{u}, \tilde{w})^T, \nonumber \\
			\tilde{z}&= \tilde{\psi}_z (u,w; \alpha, \lambda, \delta) \cdot (\tilde{u}, \tilde{w})^T, \label{e: eigenvalue slow manifold}
		\end{align}
		where $\psi_{v/z}$ are given in Proposition \ref{p: existence regularization}, and $\tilde{\psi}_{v/z} (u,w; \alpha, \lambda, \delta) : \C^2 \to \C$ are linear operators which are $C^k$ in all arguments, with expansions
		\begin{align}
			\tilde{\psi}_v (u, w; \alpha, \lambda, \delta) &= \begin{pmatrix} \tilde{\psi}_v^1 (u, w; \alpha, \lambda, \delta) & \tilde{\psi}_v^2 (u, w; \alpha, \lambda, \delta) \end{pmatrix} =  \begin{pmatrix}
				- \frac{g_u(u, \psi_v(u, w; \alpha, 0); \alpha, 0)}{g_v(u, \psi_v(u, w; \alpha, 0); \alpha, 0)} & 0
			\end{pmatrix} + \mathrm{O}(\delta), \nonumber \\
			\tilde{\psi}_z (u,w; \alpha, \lambda, \delta) &= \mathrm{O}(\delta). \label{e: eigenvalue slow manifold expansions}
		\end{align}
		As a result, in the parameter regime considered here, we have a bounded solution to the eigenvalue problem \eqref{e: large lambda lin 1} if and only if we have a bounded solution to the reduced problem
		\begin{align}
			\partial_\xi \tilde{u} &= \tilde{w}, \nonumber \\
			\partial_\xi \tilde{w} &= - [c \tilde{w} + f_u (u, \psi_v (u, w; \alpha, \delta)) \tilde{u} + f_v(u, \psi_v(u, w; \alpha, \delta); \alpha, \delta) \tilde{v} - \lambda \tilde{u}],
		\end{align}
		where $\tilde{v}$ is given by \eqref{e: eigenvalue slow manifold}. 
	\end{prop}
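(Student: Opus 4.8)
The plan is to obtain Proposition~\ref{p: eigenvalue reduction} by a second application of Fenichel's geometric singular perturbation theory, now to the coupled $8$-dimensional system \eqref{e: slow existence stability coupled}, running in parallel with the reduction of Proposition~\ref{p: existence regularization}. Passing to the fast variable $\zeta = \xi/\delta$ and setting $\delta = 0$, the layer problem acts only on $(v,z,\tilde v,\tilde z)$, with $(u,w,\tilde u,\tilde w)$ frozen as parameters. The $(v,z)$ block is exactly the fast subsystem from Section~\ref{s: gspt}, with critical set $z=0$, $v = v_\alpha(u)$, while the $(\tilde v,\tilde z)$ block is the linear, inhomogeneous layer system $\tilde v_\zeta = \tilde z$, $\tilde z_\zeta = -\tfrac{1}{d}[g_u\tilde u + g_v\tilde v]$, evaluated on the $(v,z)$ critical set (here $g_u = \partial_u g$, $g_v = \partial_v g$ at $\delta = 0$). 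Its equilibria are $\tilde z = 0$ and $\tilde v = -(g_u/g_v)\tilde u$, so the joint critical manifold is precisely $\tilde{\mathcal{M}}_0$ as recorded above.

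Next I would check normal hyperbolicity of $\tilde{\mathcal{M}}_0$ on the region $u > -(1-\alpha)^2/(4\alpha)$. The linearization of the layer flow transverse to $\tilde{\mathcal{M}}_0$ is block lower-triangular, with the $(v,z)$-linearization and the homogeneous part of the $(\tilde v,\tilde z)$-equation on the diagonal; but that homogeneous part is literally the linearization of the $(v,z)$ fast field about the critical set, so the two diagonal blocks coincide. Hence the transverse spectrum is the doubled hyperbolic spectrum already used for $\mathcal{M}_0$ in Proposition~\ref{p: existence regularization}, and in particular contains no central directions: adjoining the variational equation introduces none. Truncating the vector field outside the compact region cut out by the bounds on $u,w,v,z$ (so that the trajectories of interest, which by hypothesis satisfy those bounds, are unaffected, exactly as in Proposition~\ref{p: existence regularization}) and using $|\lambda| < \Lambda$ to keep the truncated field uniformly $C^k$, Fenichel's theorem yields a normally hyperbolic, locally invariant slow manifold $\tilde{\mathcal{M}}_\delta$ depending $C^k$-smoothly on $(\alpha,\lambda,\delta)$. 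Since the full field is linear in $(\tilde u,\tilde w,\tilde v,\tilde z)$, scaling the entire tilde-block by a constant maps trajectories in $\tilde{\mathcal{M}}_\delta$ to trajectories in $\tilde{\mathcal{M}}_\delta$; by uniqueness of the Fenichel graph over the $(u,w)$-base, the fiber over each $(u,w)$ is therefore a linear subspace, so $\tilde{\mathcal{M}}_\delta$ is the graph \eqref{e: eigenvalue slow manifold} with $\tilde\psi_{v/z}$ linear in $(\tilde u,\tilde w)$, while $\psi_{v/z}$ are unchanged from Proposition~\ref{p: existence regularization} because the $(v,z)$ subsystem decouples from the tilde-block. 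The expansions \eqref{e: eigenvalue slow manifold expansions} come from evaluating the graph at $\delta = 0$: there the defining relation $0 = -[g_u\tilde u + g_v\tilde v]$ with $v = v_\alpha(u) = \psi_v(u,w;\alpha,0)$ forces $\tilde v = -(g_u/g_v)\tilde u$ and $\tilde z = 0$, with $\mathrm{O}(\delta)$ corrections by smoothness of the manifold in $\delta$. The one genuinely new ingredient, and the main point to get right, is this normal-hyperbolicity check together with the preservation of exact linear dependence on $(\tilde u,\tilde w)$; once those are in place the construction is a faithful copy of the existence reduction.

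Finally, for the equivalence of bounded solutions: the fast directions complementary to $\tilde{\mathcal{M}}_\delta$ are hyperbolic with a uniform exponential dichotomy for $|\delta|$ small, so any component of a trajectory off the manifold grows as $\xi \to +\infty$ or as $\xi\to-\infty$ and cannot stay bounded; hence any bounded trajectory of \eqref{e: slow existence stability coupled} obeying the stated bounds lies in $\tilde{\mathcal{M}}_\delta$ --- this is the argument used after Proposition~\ref{p: existence regularization}, now also applied in the $(\tilde v,\tilde z)$ directions. Substituting the graph into the $(\tilde u,\tilde w)$ equations of \eqref{e: slow existence stability coupled} gives the stated reduced problem, and conversely any bounded solution of the reduced problem, paired with $(\tilde v,\tilde z)$ read off from \eqref{e: eigenvalue slow manifold}, solves the full system by invariance of $\tilde{\mathcal{M}}_\delta$ and is bounded because $\tilde\psi_{v/z}$ are bounded linear operators evaluated on bounded data. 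This yields the equivalence to \eqref{e: large lambda lin 1} and completes the reduction.
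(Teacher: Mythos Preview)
Your overall strategy---couple the linearized equation to the existence problem and apply Fenichel to the resulting $8$-dimensional system---is exactly the paper's approach, and your identification of the critical manifold $\tilde{\mathcal M}_0$ and the normal-hyperbolicity check are correct. The genuine gap is in your argument for linearity of $\tilde\psi_{v/z}$ in $(\tilde u,\tilde w)$. You assert that the scaling symmetry $(\tilde u,\tilde w,\tilde v,\tilde z)\mapsto c(\tilde u,\tilde w,\tilde v,\tilde z)$ maps trajectories in $\tilde{\mathcal M}_\delta$ to trajectories in $\tilde{\mathcal M}_\delta$, and then invoke ``uniqueness of the Fenichel graph.'' But Fenichel slow manifolds are not unique---any two agree only up to terms exponentially small in $\delta$---so the scaled image of a given $\tilde{\mathcal M}_\delta$ is another perfectly valid slow manifold, not a priori the same one. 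Symmetry of the vector field does not by itself force any particular invariant manifold to be symmetric. A closely related issue is non-compactness: you truncate only in $(u,w,v,z)$, leaving the tilde directions unbounded, so you are outside the standard hypotheses of Fenichel's theorem, and the ``stay in a tubular neighborhood for all time'' characterization that could otherwise pin the manifold down is not available here.

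The paper addresses both points simultaneously by going inside the proof of Fenichel's theorem rather than invoking it as a black box. It first observes that linearity of the tilde-block makes the normal hyperbolicity constants uniform once only the front variables $(u,w,v,z)$ are confined to a compact set, so the graph-transform iteration still converges despite non-compactness in the tilde directions. It then checks directly that the graph transform preserves the triangular-plus-linear structure \eqref{e: eigenvalue slow manifold}: starting the iteration from a graph of that form, the fixed point inherits it. Your scaling idea can be made rigorous in exactly this way---equivariance of the vector field under scaling implies the graph transform commutes with the scaling action, which is precisely the statement that it preserves linearity in $(\tilde u,\tilde w)$---but as written the argument skips the step that requires opening the hood.
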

	\begin{proof}
		This follows from a modification of the proof of Fenichel's persistence theorem \cite{Fenichel}. First, Fenichel's theorem usually applies to compact manifolds, but this compactness is needed only to guarantee uniformity of the hyperbolicity constants, and here since the second set of equations is linear, one readily finds that hyperbolicity is uniform after only restricting the front variables $(u,v,w,z)$ to a compact set. Second, the coupled system \eqref{e: slow existence stability coupled} has a special structure, namely, the coupling is triangular, with the first four equations not depending on the last four variables, and the equations for the last four variables are linear in $(\tilde{u}, \tilde{w}, \tilde{v}, \tilde{z})$. We assert in \eqref{e: slow existence stability coupled} that this property is preserved in the slow manifold reduction, so that the $v$ and $z$ components of the slow manifold depend only on the existence variables, and the $\tilde{v}, \tilde{z}$ components are linear in $(\tilde{u}, \tilde{w})$. The proof of Fenichel's persistence theorem \cite{Fenichel, Fenichel2} (see also \cite[Chapter 3]{Wiggins}) constructs the slow manifold as the fixed point of a map called the graph transform. Inspecting the details, one finds that the graph transform preserves the structure in \eqref{e: eigenvalue slow manifold}, so that by initializing with a map with this structure, one finds this structure for the resulting limit, as desired. The expansion \eqref{e: eigenvalue slow manifold expansions} can then be computed as usual using invariance of the slow manifold. 
	\end{proof}
	
	\subsection{Exponential dichotomies}
	To prove Proposition \ref{p: large lambda stability}, we will use the theory of \emph{exponential dichotomies}, so we first recall some basic definitions and results. For further background on exponential dichotomies, see for instance \cite{SandstedeReview, Coppel}. 
	\begin{defi}
		Consider an equation
		\begin{align}
			u_\xi = A (\xi) u, \label{e: example ODE}
		\end{align}
		with $u \in \C^n, A \in \C^{n \times n}$. Let $\Phi(\xi, \zeta)$ denote the flow operator for this ODE, which maps an initial condition at time $\zeta$ to the solution at time $\xi$. We say \eqref{e: example ODE} has an \emph{exponential dichotomy on} $\R$ if there exist constants $K > 0$ and $\kappa^s < 0 < \kappa^u$ together with a family of projections $P(\xi)$, continuous in $\xi \in \R$, such that the following is true for all $\xi, \zeta \in \R$. 
		\begin{itemize}
			\item Setting $\Phi^s(\xi, \zeta) = \Phi(\xi, \zeta) P(\zeta)$, we have
			\begin{align}
				|\Phi^s(\xi, \zeta)| \leq K e^{\kappa^s (\xi - \zeta)}
			\end{align}
			for all $\xi, \zeta \in \R$ with $\xi \geq \zeta$. 
			\item Setting $\Phi^u(\xi, \zeta) = \Phi(\xi, \zeta) (I - P(\zeta))$, we have
			\begin{align}
				|\Phi^u (\xi, \zeta) | \leq K e^{\kappa^u (\xi - \zeta)} 
			\end{align}
			for all $\xi, \zeta \in \R$ with $\xi \leq \zeta$. 
			\item The projections $P(\xi)$ commute with the evolution, $\Phi(\xi,\zeta) P(\zeta) = P(\xi) \Phi(\xi, \zeta)$. 
		\end{itemize}
	\end{defi}
	To summarize, the ODE \eqref{e: example ODE} has an exponential dichotomy on $\R$ if for each fixed $\xi$, we can decompose the phase space into two complementary subspaces, one which corresponds to initial conditions which decay exponentially in forward (spatial) time and one which corresponds to initial conditions which decay exponentially in backward time. One can then see that if \eqref{e: example ODE} has an exponential dichotomy, then it admits no solutions which are bounded on $\R$. Hence, if an equation $u_\xi = A(\xi, \lambda) u$ corresponds to the spectral problem for a traveling wave and has an exponential dichotomy on $\R$ for some $\lambda \in \C$, then this $\lambda$ cannot be an eigenvalue of the traveling wave linearization. Exponential dichotomies are also robust to small perturbations of the equation; see \cite[Chapter 4]{Coppel}. Finally, if the coefficient matrix $A(\xi)$ is slowly varying, then the existence of exponential dichotomies follows from hyperbolicity of the frozen coefficient system at each $\xi$; see \cite[Lemma 2.3]{Sakamoto}. 
	
	\subsection{Stability for large $|\lambda|$ --- proof of Proposition \ref{p: large lambda stability}}\label{s: large lambda}
	To study the large $\lambda$ limit, we set $\lambda = \frac{1}{\gamma^2}$ and aim to perturb from $\gamma = 0$. Defining the rescaled (spatial) time $y = \xi/\delta$, we find the \emph{fast system}
	\begin{align}
		\partial_y \tilde{u} &= \delta \tilde{w} \nonumber \\
		\partial_y \tilde{w} &= - \delta\left[c \tilde{w} + f_u \tilde{u}+ f_v \tilde{v}- \frac{1}{\gamma^2} \tilde{u}\right] \nonumber \\
		\partial_y \tilde{v} &= \tilde{z} \nonumber \\
		\partial_y \tilde{z} &= -  \left[\delta c \tilde{z} + g_u \tilde{u} + g_v \tilde{v} - \frac{\delta^2}{\gamma^2} \tilde{v} \right],
	\end{align}
	equivalent to \eqref{e: large lambda lin 1} for $\delta > 0$. To improve the singularity at $\gamma = 0$, we again rescale time to $\zeta = y/|\gamma|$ and define $\hat{w} = |\gamma| \tilde{w}$, finding the equivalent system
	\begin{align}
		\partial_\zeta \tilde{u} &= \delta \hat{w} \nonumber \\
		\partial_\zeta \hat{w} &=  \frac{|\gamma|^2}{\gamma^2} \delta \tilde{u} - \delta |\gamma|^2 \left[ c \frac{\hat{w}}{|\gamma|} + f_u \tilde{u}+ f_v \tilde{v} \right] \nonumber \\
		\partial_\zeta \tilde{v} &= |\gamma| \tilde{z} \nonumber \\
		\partial_\zeta \tilde{z} &=  \frac{\delta^2}{|\gamma|} \frac{|\gamma|^2}{\gamma^2} \tilde{v} - |\gamma|\left[ \delta c \tilde{z} + g_u \tilde{u} + g_v \tilde{v} \right] \label{e: large lambda finally rescaled}
	\end{align}
	Our goal is to show that \eqref{e: large lambda finally rescaled} admits no bounded solutions for $|\delta|, |\gamma|$ small, with the argument of $\gamma$ restricted to an appropriate sector. We proceed by dividing the first quadrant of the $(\delta, |\gamma|)$ plane into two distinct regions: the region covered by the lines $\delta = \delta_1 |\gamma|, \delta_1 \geq 0$, and the region covered by the lines $|\gamma| = \gamma_2 \delta, \gamma_2 > 0$. 
	
	\subsubsection{$\delta = \delta_1 |\gamma|$}
	First, we look along lines $\delta = \delta_1 |\gamma|$ in the $(\delta, |\gamma|)$ plane, with $\delta_1 > 0$. Along any such line, the system \eqref{e: large lambda finally rescaled} has an Euler multiplier $|\gamma|$, and so rescaling time to remove this multiplier we obtain the equivalent system
	\begin{align}
		\partial_\zeta \tilde{u} &= \delta_1 \hat{w} \nonumber \\
		\partial_\zeta \hat{w} &=  \frac{|\gamma|^2}{\gamma^2} \delta_1 \tilde{u} - \delta_1 |\gamma|^2 \left[c\frac{\hat{w}}{|\gamma|} + f_u \tilde{u} + f_v \tilde{v}\right] \nonumber \\
		\partial_\zeta \tilde{v} &= \tilde{z} \nonumber \\
		\partial_\zeta \tilde{z} &= \delta_1^2 \frac{|\gamma|^2}{\gamma^2} \tilde{v} -  [\delta_1 |\gamma| c \tilde{z} + g_u \tilde{u} + g_v \tilde{v}]. 
	\end{align}
	Writing $\tilde{U} = (\tilde{u}, \hat{w}, \tilde{v}, \tilde{z})^T$, this system has the form 
	\begin{align}
	\partial_\zeta \tilde{U} = A_0 (u, v, \delta_1, \gamma) \tilde{U} + |\gamma| A_1(u,v, \delta_1, \gamma) \tilde{U}, \label{e: large lambda first scaling}
	\end{align}
	where
	\begin{align*}
		A_0 (u, v, \delta_1, \gamma) = \begin{pmatrix}
			0 & \delta_1 & 0 & 0 \\
			\delta_1 \frac{|\gamma|^2}{\gamma^2} & 0 & 0 & 0 \\
			0 & 0 & 0 & 1 \\
			-g_u (u,v; \alpha, 0) & 0 & \delta_1^2 \frac{|\gamma|^2}{\gamma^2} - g_v (u,v; \alpha, 0) & 0 
		\end{pmatrix},
	\end{align*}
	and 
	\begin{align*}
		A_1(u,v, \delta_1, \gamma) = \begin{pmatrix}
			0 & 0 & 0 & 0 \\
			- \delta_1 |\gamma| f_u & - \delta_1 c & - \delta_1 |\gamma| f_v & 0 \\
			0 & 0 & 0 & 0 \\
			0 & 0 & 0 & - \delta_1 c 
		\end{pmatrix}.
	\end{align*}
	We would like  to show that \eqref{e: large lambda first scaling} admits no bounded solutions by constructing exponential dichotomies. Since the coefficients $(u,v)$ are slowly varying, on the timescale $\delta_1 |\gamma|^2$, we can establish exponential dichotomies by verifying that the coefficient matrix is hyperbolic for each fixed $u,v$ along the front \cite[Lemma 2.3]{Sakamoto}. However, $A_0$ loses hyperbolicity at $\delta_1 = 0$, so we treat the regime $\delta_1 \gtrsim 0$ separately by reducing to a slow manifold as in Proposition \ref{p: eigenvalue reduction}. 
	
\begin{lemma}[Exponential dichotomies for $\delta_1$ away from 0]\label{l: large lambda first scaling}
	Fix $0 < \alpha < 1$, and let $\delta_1 > 0$. Assume $u(\delta_1 |\gamma|^2 \zeta)$ and $v(\delta_1 |\gamma|^2 \zeta)$ are bounded and satisfy $\frac{3(1-\alpha)}{4} \leq u+v \leq M$. Then there exists $\gamma_*(\alpha, \delta_1) > 0$, continuous in both arguments, such that the system \eqref{e: large lambda first scaling} admits no bounded solutions provided $|\gamma| < \gamma_*(\alpha, \delta_1)$ and $|\mathrm{Arg} \, \gamma| \leq \frac{3 \pi}{8}$. 
\end{lemma}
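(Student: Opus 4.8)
The plan is to produce an exponential dichotomy on $\R$ for the system \eqref{e: large lambda first scaling}; since an exponential dichotomy on the line precludes bounded solutions, and since the changes of variables leading from \eqref{e: large lambda lin 1} to \eqref{e: large lambda first scaling} (the time reparametrizations $y=\xi/\delta$, $\zeta = y/|\gamma|$, removal of the Euler multiplier, and the substitution $\hat w = |\gamma|\tilde w$) are invertible and preserve boundedness, this yields the claim. To build the dichotomy I will invoke \cite[Lemma 2.3]{Sakamoto}. The coefficients $(u,v)$ enter \eqref{e: large lambda first scaling} only through $\xi = \delta_1|\gamma|^2\zeta$, so, using that the front from Proposition \ref{p: secondary CSC existence} has bounded derivatives, the $\zeta$-derivative of the coefficient matrix is $\mathrm{O}(|\gamma|^2)$; moreover $|\gamma|A_1$ is $\mathrm{O}(|\gamma|)$, with $A_1$ bounded on the admissible bounded range of $(u,v)$. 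Hence for $|\gamma|$ small, \eqref{e: large lambda first scaling} is a small, slowly varying perturbation of the frozen-coefficient family $\partial_\zeta\tilde U = A_0(u,v,\delta_1,\gamma)\tilde U$, and by \cite[Lemma 2.3]{Sakamoto} together with roughness of dichotomies \cite[Chapter 4]{Coppel} it suffices to show that $A_0$ is hyperbolic, with a spectral gap uniform over $|\Arg\gamma|\le\tfrac{3\pi}{8}$ and over the compact set $\{\tfrac{3(1-\alpha)}{4}\le u+v\le M\}$ of admissible $(u,v)$.

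The key point is that $A_0$ is block lower triangular, with $2\times2$ diagonal blocks
\[
B_1 = \begin{pmatrix} 0 & \delta_1 \\ \delta_1\tfrac{|\gamma|^2}{\gamma^2} & 0 \end{pmatrix}, \qquad
B_2 = \begin{pmatrix} 0 & 1 \\ \delta_1^2\tfrac{|\gamma|^2}{\gamma^2} - g_v(u,v;\alpha,0) & 0 \end{pmatrix},
\]
so that $\mathrm{spec}(A_0) = \mathrm{spec}(B_1)\cup\mathrm{spec}(B_2)$. Writing $\gamma = |\gamma|e^{i\phi}$ gives $|\gamma|^2/\gamma^2 = e^{-2i\phi}$, so $\mathrm{spec}(B_1) = \{\pm\delta_1 e^{-i\phi}\}$, with real parts $\pm\delta_1\cos\phi$ bounded away from zero since $|\phi|\le\tfrac{3\pi}{8}<\tfrac{\pi}{2}$. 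For $B_2$ the eigenvalues are $\pm\sqrt{\delta_1^2 e^{-2i\phi} - g_v}$. Here I will use the explicit formula $g_v(u,v;\alpha,0) = (1-\alpha) - 2(u+v)$, so that the hypothesis $u+v\ge\tfrac{3(1-\alpha)}{4}$ forces $g_v\le-\tfrac{1-\alpha}{2}<0$ (while $g_v\ge (1-\alpha) - 2M$ stays bounded below). Consequently $\delta_1^2 e^{-2i\phi} - g_v = (-g_v) + \delta_1^2 e^{-2i\phi}$ lies on the circular arc $\{(-g_v)+\delta_1^2 e^{i\theta}:|\theta|\le\tfrac{3\pi}{4}\}$, which is centered on the positive real axis at $-g_v>0$, meets the real axis only at $\theta=0$ — where its value $-g_v+\delta_1^2$ is strictly positive — and therefore stays in $\C\setminus(-\infty,0]$, at distance bounded below uniformly in the admissible parameters. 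Hence the principal square root is well defined and $\Re\sqrt{\delta_1^2 e^{-2i\phi} - g_v}\ge\eta_0(\alpha,\delta_1,M)>0$. Combining the two blocks, $A_0$ has exactly two eigenvalues with real part $\ge c_0 := \min\{\eta_0,\delta_1\cos\tfrac{3\pi}{8}\}>0$ and two with real part $\le -c_0$, uniformly; in particular the stable/unstable splitting has constant dimension $(2,2)$, as needed for a genuine dichotomy.

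With this uniform hyperbolicity in hand, \cite[Lemma 2.3]{Sakamoto} applied to \eqref{e: large lambda first scaling} yields an exponential dichotomy on $\R$ whenever $|\gamma|<\gamma_*(\alpha,\delta_1)$, and $\gamma_*$ can be chosen continuous in its arguments because the spectral gap $c_0$, the $\mathrm{O}(|\gamma|^2)$ bound on the $\zeta$-derivative of the coefficients, the bound on $|\gamma|A_1$, and the front itself (via Proposition \ref{p: secondary CSC existence}) depend continuously on $\alpha$ and $\delta_1$. As noted above, the dichotomy excludes bounded solutions, proving the lemma.

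I expect the one genuinely delicate point to be the uniform hyperbolicity of $B_2$: whether $\delta_1^2 e^{-2i\phi} - g_v$ touches the branch cut $(-\infty,0]$ is governed precisely by the sign of $g_v$ — controlled by the lower bound $u+v\ge\tfrac{3(1-\alpha)}{4}$ inherited from the estimate $\uscf+\vscf>\tfrac{3(1-\alpha)}{4}$ of Proposition \ref{p: secondary CSC existence} — together with the restriction $|\Arg\gamma|\le\tfrac{3\pi}{8}$, which keeps $2\phi$ strictly inside $(-\pi,\pi)$ and hence the arc away from its antipodal point $\theta=\pm\pi$. Verifying the precise hypotheses of \cite[Lemma 2.3]{Sakamoto} — in particular that the relevant slow derivative is genuinely small, which uses boundedness of the front's derivatives — is then routine by comparison.
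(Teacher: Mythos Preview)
Your proof is correct and follows essentially the same route as the paper: compute the spectrum of $A_0$ via its block-triangular structure, verify uniform hyperbolicity of each block using the bound $u+v\ge\tfrac{3(1-\alpha)}{4}$ to control $g_v$, then invoke \cite[Lemma 2.3]{Sakamoto} for the slowly varying leading part together with roughness for the $|\gamma|A_1$ perturbation. Your geometric argument for $B_2$ (the arc $(-g_v)+\delta_1^2 e^{-2i\phi}$ avoids $(-\infty,0]$) is equivalent to the paper's contradiction argument that an imaginary eigenvalue would force $e^{i\theta}$ to be real and negative.
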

\begin{proof}
	We set $\frac{|\gamma|^2}{\gamma^2} = e^{i \theta}$. The eigenvalues of $A_0(u,v, \delta_1, \gamma)$ are 
	\begin{align*}
		\nu_1^\pm = \pm e^{i \frac{\theta}{2}} \delta_1, \quad \nu_2^\pm = \pm \sqrt{\delta_1^2 e^{i \theta} -  g_v (u, v; \alpha, 0)} = \pm \sqrt{\delta_1^2 e^{i \theta} -  (1-\alpha -2 (u+v))}. 
	\end{align*}
	The first pair $\nu_1^\pm$ are clearly hyperbolic if $|\Arg \theta| \leq \frac{3 \pi}{4}$. For the second pair, suppose one of $\nu_2^\pm = ik$ is imaginary. Then we have
	\begin{align*}
		-k^2 = \delta_1^2 e^{i \theta} -  (1- \alpha - 2(u+v)). 
	\end{align*}
	Rearranging, we find
	\begin{align*}
		e^{i \theta} = \frac{1}{\delta_1^2} \left[ -k^2 + [1-\alpha - 2(u+v)]  \right].
	\end{align*}
	Since $u$ and $v$ are real and satisfy $(u+v) \geq \frac{3(1-\alpha)}{4}$, the right hand side is a real number bounded above by $\frac{1}{\delta_1^2} [-k^2 - \frac{(1-\alpha)}{2}]$, and in particular is negative. Hence, the only possibility is $\theta = \pi$, and so it follows that $A_0$ is hyperbolic in the desired region, with eigenvalues uniformly bounded away from the imaginary axis. Since the non-constant coefficients in the equation $\partial_\zeta \tilde{U} = A_0 (u,v, \delta_1, \gamma) \tilde{U}$ are slowly varying for $ \delta_1 |\gamma|^2$ small, (since they are functions of the slowly varying variables $u(\delta_1 |\gamma|^2 \zeta), v(\delta_1 |\gamma|^2 \zeta)$) it follows from this hyperbolicity together with \cite[Lemma 2.3]{Sakamoto} that the system $\partial_\zeta \tilde{U} = A_0 \tilde{U}$ admits exponential dichotomies, provided $|\gamma|$ is small. Hence, by roughness of exponential dichotomies \cite{Coppel}, there exists $\gamma_*(\alpha, \delta_1)$ such that the system \eqref{e: large lambda first scaling} admits exponential dichotomies provided $|\gamma| \leq \gamma_*(\alpha, \delta_1)$ and $|\Arg \theta| \leq \frac{3 \pi}{4}$. Exponential dichotomies rule out the existence of bounded solutions to this equation, as desired. 
\end{proof}

\begin{lemma}[No bounded solutions for $\delta_1 \sim 0$]\label{l: large lambda first rescaling small slope}
	Fix $0 < \alpha < 1$. Assume that $(u(\xi), v(\xi))$ is a bounded solution to \eqref{e: reduced sys}. Then there exists a constant $\delta_1^* >0$ and $\gamma_1^*(\delta_1^*)$ depending on $\delta_1^*$ but not on $|\delta_1| < \delta_1^*$ such that the system \eqref{e: large lambda first scaling} has no bounded solutions provided $|\delta_1| < \delta_1^*$ and $|\gamma| \leq \gamma_1^*(\delta_1^*)$ with $|\Arg \gamma| \leq \frac{3 \pi}{8}$. 
\end{lemma}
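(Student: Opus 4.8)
\emph{Proof proposal.} The obstruction is that the leading matrix $A_0$ in \eqref{e: large lambda first scaling} degenerates at $\delta_1 = 0$: writing $\frac{|\gamma|^2}{\gamma^2} = e^{i\theta}$ as in the proof of Lemma \ref{l: large lambda first scaling}, its eigenvalues are $\pm e^{i\theta/2}\delta_1$ in the $(\tilde u, \hat w)$ block and $\pm\sqrt{\delta_1^2 e^{i\theta} - g_v(u,v;\alpha,0)}$ in the $(\tilde v, \tilde z)$ block. The latter pair is uniformly hyperbolic for $|\delta_1|$ and $|\gamma|$ small, exactly as in the proof of Lemma \ref{l: large lambda first scaling}, using $-g_v(u,v;\alpha,0) = 2(u+v) - (1-\alpha) \geq \tfrac{1-\alpha}{2} > 0$ along the front (Proposition \ref{p: secondary CSC existence}), while the former collapses onto the imaginary axis at rate $\mathrm{O}(\delta_1)$. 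This is a fast--slow structure in $\zeta$ with small parameter $\delta_1$. The plan is to reduce to the two-dimensional slow subsystem in $(\tilde u, \hat w)$, rescale time to remove the factor $\delta_1$, and then observe that the resulting system is uniformly hyperbolic with slowly varying coefficients whose slow timescale no longer involves $\delta_1$, so that the threshold on $|\gamma|$ can be taken uniform in $\delta_1$.

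First I would perform a slow manifold reduction for \eqref{e: large lambda first scaling}, following the same modification of Fenichel's theorem used in Proposition \ref{p: eigenvalue reduction}: the equation is linear, so compactness of the range of the front $(u(\xi), v(\xi))$ suffices to make the hyperbolicity of the $(\tilde v, \tilde z)$ block uniform, and the graph transform preserves the block structure. This yields, for $|\delta_1| < \delta_1^*$ and $|\gamma|$ small, a locally invariant slow manifold on which $(\tilde v, \tilde z)^T = \Psi(\zeta; \delta_1, \gamma)(\tilde u, \hat w)^T$ with $\Psi$ uniformly bounded and, at leading order, $\tilde v = -\tfrac{g_u}{g_v}\tilde u + \mathrm{O}(\delta_1, |\gamma|)$, $\tilde z = \mathrm{O}(\delta_1, |\gamma|)$, consistent with \eqref{e: eigenvalue slow manifold expansions}. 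Since the complementary fast $(\tilde v, \tilde z)$ directions are exponentially dichotomous, every bounded solution of \eqref{e: large lambda first scaling} lies on this slow manifold, so it suffices to rule out bounded solutions of the reduced system for $(\tilde u, \hat w)$.

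Inspecting \eqref{e: large lambda first scaling}, the $\tilde v$--coupling into the $\hat w$--equation enters only through $|\gamma| A_1$ and is therefore $\mathrm{O}(\delta_1 |\gamma|^2)$, while the diagonal $-\delta_1|\gamma| c\,\hat w$ term is $\mathrm{O}(\delta_1|\gamma|)$; every term in the reduced system carries a factor $\delta_1$. Rescaling $\tilde\zeta = \delta_1 \zeta$ to remove this common Euler multiplier, the reduced system becomes $\partial_{\tilde\zeta}(\tilde u, \hat w)^T = B_0 \,(\tilde u, \hat w)^T + \mathrm{O}(|\gamma|)$ with $B_0 = \left(\begin{smallmatrix} 0 & 1 \\ e^{i\theta} & 0 \end{smallmatrix}\right)$, whose eigenvalues $\pm e^{i\theta/2}$ lie on opposite sides of the imaginary axis and are uniformly bounded away from it precisely when $|\Arg\gamma| \leq \tfrac{3\pi}{8}$, so that $|\theta/2| \leq \tfrac{3\pi}{8} < \tfrac{\pi}{2}$. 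The coefficients depend on $\tilde\zeta$ only through $u(\delta_1|\gamma|^2 \zeta) = u(|\gamma|^2 \tilde\zeta)$ and $v(|\gamma|^2\tilde\zeta)$, hence vary on the slow scale $|\gamma|^{-2}$, a rate independent of $\delta_1$. By \cite[Lemma 2.3]{Sakamoto} together with roughness of exponential dichotomies \cite{Coppel}, the reduced system has an exponential dichotomy on $\R$ provided $|\gamma| \leq \gamma_1^*(\delta_1^*)$ for a threshold determined only by $\alpha$ and the fixed choice of $\delta_1^*$, hence admits no bounded solution. Combined with the dichotomy of the fast block, \eqref{e: large lambda first scaling} itself then has an exponential dichotomy on $\R$, and therefore no bounded solutions, as claimed.

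The main obstacle is the uniformity in $\delta_1$: one must ensure that both the slow manifold reduction and the slowly-varying-coefficient dichotomy estimate hold on a $|\gamma|$--range that does not shrink as $\delta_1 \to 0$. The key is that it is the product $\delta_1 |\gamma|^2$ that controls the $\zeta$--variation of the coefficients, so after rescaling by $\delta_1$ the dependence on $\delta_1$ drops out of the relevant timescale; care is needed to carry out the reduction and the estimate in the rescaled variable (or, equivalently, to treat the $\mathrm{O}(1)$--rate fast block and the $\mathrm{O}(\delta_1)$--rate slow block as separate exponential dichotomies) rather than globally in $\zeta$, so that $\delta_1$ cancels from the final thresholds. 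Everything else reduces to direct computations of the kind already carried out in the proof of Lemma \ref{l: large lambda first scaling}.
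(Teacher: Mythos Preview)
Your proposal is correct and follows essentially the same route as the paper: perform a slow manifold reduction in the spirit of Proposition~\ref{p: eigenvalue reduction} to eliminate the uniformly hyperbolic $(\tilde v,\tilde z)$ block, then analyze the reduced $(\tilde u,\hat w)$ system after removing the common factor $\delta_1$, obtaining a leading part independent of $\delta_1$ whose hyperbolicity gives dichotomies with a threshold on $|\gamma|$ uniform in $\delta_1$. The only cosmetic difference is that the paper observes the leading order of the reduced system at $\gamma=0$ is constant-coefficient and applies roughness directly, whereas you invoke \cite[Lemma~2.3]{Sakamoto} for the slowly varying coefficients before applying roughness; either packaging works.
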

\begin{proof}
	Arguing as in the proof of Proposition \ref{p: eigenvalue reduction} (that is, coupling to the corresponding rescaled existence problem and applying a slow manifold reduction), we find that for $\delta_1$ small, all bounded solutions to \eqref{e: large lambda first scaling} lie on a slow manifold with $\tilde{v} = \left(- \frac{g_u}{g_v} + \mathrm{O}(\delta) \right)\tilde{u}$. Hence, we have a bounded solution to \eqref{e: large lambda first scaling} for $\delta_1$ small if and only if we have a bounded solution to the corresponding reduced problem
	\begin{align*}
		\partial_\zeta \tilde{u} &= \hat{w} \\
		\partial_\zeta \hat{w} &= e^{i \theta} \tilde{u} - |\gamma| [c w + |\gamma| f_u \tilde{u} + |\gamma| f_v \tilde{v} ],
	\end{align*}
	where $e^{i \theta} = \frac{|\gamma|^2}{\gamma^2}$. For $\gamma = 0$, we find this system has exponential dichotomies for $|\Arg \theta | \leq \frac{3 \pi}{4}$. Since the leading order system is independent of $\delta_1$, these dichotomies persist to $|\gamma|$ small by roughness \cite{Coppel}, with bound independent of $\delta_1$, as desired. 
\end{proof}

\subsubsection{$|\gamma| = \gamma_2 \delta$}
	In this scaling, the system \eqref{e: large lambda finally rescaled} admits $\delta$ as an Euler multiplier, so after rescaling time to eliminate this multiplier and also setting $|\gamma|^2/\gamma^2 = e^{i \theta}$, we obtain the system
	\begin{align}
		\partial_\zeta \tilde{u} &= \hat{w} \nonumber, \\
		 \partial_\zeta \hat{w} &= e^{i \theta} \tilde{u} - \delta \gamma_2 [ c \hat{w}+ \gamma_2 \delta f_u \tilde{u} + \gamma_2 \delta f_v \tilde{v}] \nonumber, \\
		 \partial_\zeta \tilde{v} &= \gamma_2 \tilde{z} \nonumber, \\
		 \partial_\zeta \tilde{z} &= \frac{e^{i\theta}}{\gamma_2} \tilde{v} - \gamma_2 [\delta c \tilde{z} + g_u \tilde{u}+ g_v \tilde{v}]. \label{e: large lambda second scaling}
	\end{align}
	Now, we define $\hat{z} = \gamma_2 \tilde{z}$ to find the equivalent system
	\begin{align*}
	\partial_\zeta \tilde{u} &= \hat{w} \nonumber, \\
\partial_\zeta \hat{w} &= e^{i \theta} \tilde{u} - \delta \gamma_2 [ c \hat{w}+ \gamma_2 \delta f_u \tilde{u} + \gamma_2 \delta f_v \tilde{v}] \nonumber, \\
\partial_\zeta \tilde{v} &= \hat{z} \nonumber, \\
\partial_\zeta \hat{z} &= e^{i\theta} \tilde{v} - \gamma_2 [\delta c \tilde{z} + \gamma_2 g_u \tilde{u}+ \gamma_2 |g_v \tilde{v}].
	\end{align*}
	The leading order part at $\gamma_2 = 0$ is hyperbolic if $|\Arg \theta| < \frac{3 \pi}{4}$, so we readily obtain the following result by roughness of exponential dichotomies \cite{Coppel}. 
\begin{lemma}\label{l: large lambda second scaling}
	Fix $0 < \alpha < 1$ and assume that $(u, u_\xi, v, v_\xi)$ is a bounded solution to \eqref{e: slow sys}. Then there exists a constant $\gamma_2^* > 0$ such that \eqref{e: large lambda second scaling} admits no bounded solutions provided $|\delta| <1, |\gamma_1| < \gamma_2^*$, and $|\Arg \theta| \leq \frac{3 \pi}{4}$. 
\end{lemma}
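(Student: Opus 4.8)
The plan is to make rigorous the remark preceding the statement. Observe first that for $\gamma_2 \neq 0$ the linear change of variables $\hat z = \gamma_2 \tilde z$ is invertible, hence a bijection on the space of bounded functions, and it transforms \eqref{e: large lambda second scaling} into the system in $(\tilde u, \hat w, \tilde v, \hat z)$ displayed immediately below it. After eliminating $\tilde z$ in favour of $\hat z$, every coefficient of that system extends analytically across $\gamma_2 = 0$: the singular term $\frac{e^{i\theta}}{\gamma_2}\tilde v$ becomes $e^{i\theta}\tilde v$, and the remaining $\gamma_2$-dependent coefficients each carry at least one factor of $\gamma_2$. Hence it suffices to show that the transformed system has no bounded solutions. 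At $\gamma_2 = 0$ this system is autonomous and block-diagonal, consisting of two identical copies of $\partial_\zeta^2 \tilde u = e^{i\theta}\tilde u$; its coefficient matrix $A_0(\theta)$ therefore has eigenvalues $\pm e^{i\theta/2}$, each of multiplicity two, with real parts $\pm\cos(\theta/2)$. These are nonzero whenever $|\Arg\theta| < \pi$, and on the closed sector $|\Arg\theta| \leq \frac{3\pi}{4}$ of the statement the spectral gap is bounded below by $\cos(3\pi/8) > 0$, uniformly in $\theta$, by compactness.

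A constant-coefficient hyperbolic linear system has an exponential dichotomy on all of $\R$, with dichotomy constant and exponents controlled by this uniform gap. For $\gamma_2 \neq 0$ the transformed system is a perturbation of its $\gamma_2 = 0$ limit whose coefficient difference is $\mathrm{O}(\gamma_2)$, with a constant depending only on $\sup_\xi(|f_u| + |f_v| + |g_u| + |g_v|)$ along the given front, on $c = \csc(\alpha) = 2\sqrt{\alpha}$, and on the bound $|\delta| < 1$; here one uses that $(u, u_\xi, v, v_\xi)$ is a bounded solution of \eqref{e: slow sys}, so that $f_u, f_v, g_u, g_v$, being polynomials in $(u,v)$, are bounded along the orbit. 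By roughness of exponential dichotomies \cite[Chapter 4]{Coppel} there is then $\gamma_2^* > 0$, depending only on $\alpha$ and an a priori bound for the front, such that for $0 < |\gamma_2| < \gamma_2^*$, $|\delta| < 1$ and $|\Arg\theta| \leq \frac{3\pi}{4}$ the transformed system still admits an exponential dichotomy on $\R$. Since an exponential dichotomy on $\R$ forbids nonzero bounded solutions, transporting this back through $\hat z = \gamma_2 \tilde z$ gives the lemma.

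The only point needing care --- the main obstacle, such as it is --- is the uniformity of the threshold $\gamma_2^*$: it must be independent of $\delta \in (-1,1)$ and of $\theta$ in the sector, and must not shrink to $0$ as $\gamma_2 \to 0$. Both properties follow from the two uniform bounds above: the first gives uniformity of the unperturbed dichotomy constants (immediate from continuity of $A_0(\theta)$ and its gap on the compact $\theta$-sector), and the second controls the per-unit-$\gamma_2$ size of the perturbation; combining these with the quantitative form of the roughness theorem in \cite[Chapter 4]{Coppel} yields a $\gamma_2^*$ with the required uniformity. Everything else --- analyticity of the transformed coefficients at $\gamma_2 = 0$, bijectivity of the change of variables for $\gamma_2 \neq 0$, and the implication ``exponential dichotomy on $\R$ $\Rightarrow$ no bounded solution'' --- is routine.
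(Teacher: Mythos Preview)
Your proof is correct and follows exactly the approach the paper sketches in the sentence preceding the lemma: pass to the $\hat z = \gamma_2 \tilde z$ variables, observe hyperbolicity of the decoupled constant-coefficient limit at $\gamma_2 = 0$ on the sector $|\theta| \leq \frac{3\pi}{4}$, and invoke roughness of exponential dichotomies with perturbation of size $\mathrm{O}(\gamma_2)$ uniform in $|\delta|<1$ and $\theta$. Your added discussion of uniformity of $\gamma_2^*$ in $\delta$ and $\theta$ is a welcome elaboration of what the paper leaves implicit.
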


\subsubsection{Combining the different charts --- proof of Proposition \ref{p: large lambda stability}}
\begin{proof}[Proof of Proposition \ref{p: large lambda stability}]
	Fix $0 < \alpha < 1$. First, by Lemma \ref{l: large lambda second scaling} there exist $\gamma_2^*, \delta_2^* > 0$ such that if $|\gamma| = \gamma_2 \delta$ with $|\gamma_2| < \gamma_2^*$, $|\delta| < \delta_2^*$, and $|\Arg \gamma | \leq \frac{3 \pi}{8}$, then \eqref{e: large lambda finally rescaled} admits no bounded solutions. Also by Lemma \ref{l: large lambda first rescaling small slope}, there exist $\delta_1^*, \gamma_1^*(\delta_1^*) > 0$ such that \eqref{e: large lambda finally rescaled} has no bounded solutions provided $\delta = \delta_1 |\gamma|$, with $|\delta_1| < \delta_1^*$, $|\gamma| < \gamma_1^*(\delta_1^*)$, and $|\Arg \gamma| \leq \frac{3 \pi}{8}$. We can then apply Lemma \ref{l: large lambda first scaling} for each $\delta_1$ in the compact interval $\left[ \frac{\delta_1^*}{2}, \frac{2}{\gamma_2^*}\right]$ such that \eqref{e: large lambda finally rescaled} has no bounded solutions provided $|\gamma| < \gamma_*(\alpha, \delta_1)$, $|\Arg \gamma| \leq \frac{3 \pi}{8}$. Since this $\gamma_*$ depends continuously on $\delta_1$ and is positive everywhere on the compact interval $\left[ \frac{\delta_1^*}{2}, \frac{2}{\gamma_2^*}\right]$, there exists $\gamma_0^* (\alpha, \delta_1^*, \gamma_2^*) > 0$ such that \eqref{e: large lambda finally rescaled} has no bounded solutions for all $\delta_1 \in \left[ \frac{\delta_1^*}{2}, \frac{2}{\gamma_2^*}\right]$ provided $|\gamma| \leq \gamma_0^*(\alpha, \delta_1^*, \gamma_2^*)$ with $|\Arg \gamma| \leq \frac{3 \pi}{8}$. Setting $\gamma_0 (\alpha, \delta_1^*, \gamma_2^*) = \min \left\{ \gamma_2^*, \gamma_1^*(\delta_1^*), \gamma_0^*(\alpha, \delta_1^*, \gamma_2^*) \right\}$ and $\delta_0 (\alpha, \delta_1^*, \gamma_2^*) = \min \{ 1, \frac{2}{\gamma_2^*} \gamma_0^*(\alpha, \delta_1^*, \gamma_2^*) \}$, we find that \eqref{e: large lambda finally rescaled} has no bounded solutions with $\delta = \delta_1 |\gamma|$ for all $\delta_1 \in (0, \infty)$ provided $|\gamma| < \gamma_0(\alpha, \delta_1^*, \gamma_2^*)$, $|\delta| \leq \delta_0(\alpha, \delta_1^*, \gamma_2^*)$  and $|\Arg \gamma| \leq \frac{3 \pi}{8}$. Since $\delta_1^*$ and $\gamma_2^*$ are independent of one another, this completes the proof of Proposition \ref{p: large lambda stability} with $\delta_0 = \delta_0 (\alpha, \delta_1^*, \gamma_2^*)$, and $\Lambda_0 = \frac{1}{|\gamma_0(\alpha, \delta_1^*, \gamma_2^*)|^2}$. 
\end{proof}

\subsection{Excluding small eigenvalues via far-field/core decomposition}

The argument here closely resembles that of \cite{AveryHolzerScheelKS}, but we adapt it for completeness. Fix $0 < \alpha < 1$. The goal of this section is to prove the following. 
\begin{prop}[Stability near the origin]\label{p: scf stability near origin}
	Fix $ 0 < \alpha < 1$. There exist $\delta_*(\alpha), \lambda_0(\alpha) > 0$ such that for all $0 < \delta < \delta_*(\alpha)$ and $|\lambda| \leq \lambda_0$ away from the essential spectrum, the equation
	\begin{align}
			(\Bsc(\delta) - K_\delta \lambda) \u = 0 \label{e: scf eigenvalue problem}
	\end{align}
	 has no solutions $\u \in H^2_{0, \etasc} (\R, \C^2)$ if $\Re \lambda \geq 0$, where $\etasc = \sqrt{\alpha}$. Moreover, there is no solution at $\lambda = 0$ which belongs to $L^\infty_{0, \etasc}(\R, \C^2)$.  
\end{prop}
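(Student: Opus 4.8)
The plan is to use the geometric singular perturbation reduction of Proposition \ref{p: eigenvalue reduction} to collapse the eigenvalue problem onto the slow manifold, and then to run a far-field/core decomposition that tracks solutions of the reduced problem as $\lambda$ passes through the branch point of the essential spectrum at the origin, in the spirit of \cite{PoganScheel, AveryHolzerScheelKS}. First, I would note that any solution $\u = (\tilde{u},\tilde{v})^T \in H^2_{0,\etasc}(\R,\C^2)$ of $(\Bsc(\delta) - K_\delta\lambda)\u = 0$ (and, at $\lambda = 0$, any $\u \in L^\infty_{0,\etasc}$) is bounded and, together with the secondary CSC front $\Uscf$, satisfies the constraints of Proposition \ref{p: eigenvalue reduction}. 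That proposition then identifies such solutions with solutions $\tilde{u}$ of the scalar reduced eigenvalue equation $\mathcal{A}_\mathrm{red}(\delta,\lambda)\tilde{u} := \partial_\xi^2\tilde{u} + (\csc(\alpha)+\mathrm{O}(\delta))\partial_\xi\tilde{u} + (\tilde{f}'(\uscf(\cdot;\alpha,\delta);\alpha)+\mathrm{O}(\delta))\tilde{u} - \lambda\tilde{u} = 0$ on the slow manifold, where the $\mathrm{O}(\delta)$ terms come from the expansion \eqref{e: eigenvalue slow manifold expansions}. Crucially, at $\delta = 0$ the front equals $\ukpp$ and $\mathcal{A}_\mathrm{red}(0,\lambda) = \Akpp - \lambda$, so $\mathcal{A}_\mathrm{red}(\delta,\lambda)$ is a perturbation of the Fisher-KPP linearization $\Akpp$, with coefficients depending $C^k$ on $\delta$ and continuously on $\lambda$, uniformly on $\R$. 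It therefore suffices to exclude solutions $\tilde{u} \in H^2_{0,\etasc}(\R)$ of the reduced problem with $\Re\lambda \geq 0$ and $|\lambda|$ small, and bounded weighted solutions at $\lambda = 0$.

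The obstruction to a direct perturbation argument is that in the space $H^2_{0,\etasc}$ the essential spectrum of $\mathcal{A}_\mathrm{red}(\delta,0)$ touches the origin along $\{-k^2\}$, with the pinched double root of the expansion \eqref{e: tc double root}, so $\mathcal{A}_\mathrm{red}(\delta,0)$ is not Fredholm there. To regularize, I would pass to the stronger weight $\eta = \etasc + \tilde{\eta}$ with $\tilde{\eta} > 0$ small: by the third item of Lemma \ref{l: kpp fredholm}, robustness of the Fredholm index, and the $C^k$-dependence on $(\delta,\lambda)$, the operator $\mathcal{A}_\mathrm{red}(\delta,\lambda) : H^2_{0,\eta}(\R) \to L^2_{0,\eta}(\R)$ is Fredholm with index $-1$, uniformly for $|\lambda| \leq \lambda_0$ and $|\delta| < \delta_*$ once $\lambda_0,\delta_*$ are small. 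The missing slowly-decaying direction is restored by the far-field ansatz $\tilde{u}(\xi) = \beta\,\chi_+(\xi)\,\phi_\ast(\xi;\lambda) + \tilde{u}_c(\xi)$ with $\tilde{u}_c \in H^2_{0,\eta}(\R)$ and $\beta \in \C$, where $\phi_\ast(\xi;\lambda) = e^{\nu_\ast(\lambda)\xi}$ is the spatial mode of the $\xi\to\infty$ limiting equation emanating from the double root, with $\nu_\ast(\lambda) = -\etasc + \tilde{\nu}(\lambda)$ and $\tilde{\nu}$ the root of $\dtc^{10}\lambda + \dtc^{02}\tilde{\nu}^2 = 0$ chosen so that $-\tilde{\eta} < \Re\tilde{\nu}(\lambda) < 0$, i.e. $\chi_+\phi_\ast \in H^2_{0,\etasc}(\R)\setminus H^2_{0,\eta}(\R)$. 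It is convenient to use $\gamma := \sqrt{\lambda}$ with $\Re\gamma \geq 0$ as parameter, so that $\nu_\ast$, and hence the ansatz, depends analytically on $\gamma$ up to $\gamma = 0$ (where $\phi_\ast = e^{-\etasc\xi}$); as in the existence argument one checks that the residual $\mathcal{A}_\mathrm{red}(\delta,\lambda)[\chi_+\phi_\ast]$ lies in $L^2_{0,\eta}(\R)$ and depends continuously on $(\gamma,\delta)$. Adding the scalar parameter $\beta$ raises the Fredholm index by one (Fredholm bordering), so the joint map $(\tilde{u}_c,\beta)\mapsto \mathcal{A}_\mathrm{red}(\delta,\lambda)\tilde{u}_c + \beta\,\mathcal{A}_\mathrm{red}(\delta,\lambda)[\chi_+\phi_\ast]$ from $H^2_{0,\eta}(\R)\times\C$ to $L^2_{0,\eta}(\R)$ is Fredholm of index $0$.

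A Lyapunov–Schmidt reduction, equivalently pairing the residual with the one-dimensional kernel of the adjoint of $\mathcal{A}_\mathrm{red}(\delta,\lambda)$ on $H^2_{0,\eta}$, then yields a scalar function $E(\gamma,\delta)$, continuous on $\{\Re\gamma\geq 0,\ |\gamma|\leq\sqrt{\lambda_0}\}\times(-\delta_*,\delta_*)$, with the property that the reduced problem has a nontrivial solution $\tilde{u}\in H^2_{0,\etasc}(\R)$ (respectively, a bounded weighted solution at $\lambda = 0$) if and only if $E(\gamma,\delta) = 0$; here one uses that any such solution is, as $\xi\to\infty$, asymptotic to a multiple of $\phi_\ast(\cdot;\lambda)$ — the complementary near-branch mode being inadmissible in $H^2_{0,\etasc}$, resp. $L^\infty_{0,\etasc}$ — so that $\tilde{u} - \beta\chi_+\phi_\ast \in H^2_{0,\eta}(\R)$ for a unique $\beta$. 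Since $\mathcal{A}_\mathrm{red}(0,\lambda) = \Akpp - \lambda$, the restriction $E(\cdot,0)$ is exactly the reduced Evans-type function for $\Akpp$, and the spectral stability and no-resonance statements of Lemma \ref{l: kpp fredholm} say precisely that $E(\gamma,0)\neq 0$ for all $\gamma$ with $\Re\gamma\geq 0$ and $|\gamma|$ small. By continuity of $E$ and compactness of $\{\Re\gamma\geq 0,\ |\gamma|\leq r\}$, there exist $\delta_*(\alpha), \lambda_0(\alpha) > 0$ with $E(\gamma,\delta)\neq 0$ for all $0 < \delta < \delta_*(\alpha)$ and $\Re\gamma\geq 0$, $|\gamma|^2\leq\lambda_0(\alpha)$. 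Translating back through Proposition \ref{p: eigenvalue reduction} then rules out solutions $\u\in H^2_{0,\etasc}(\R,\C^2)$ with $\Re\lambda\geq 0$, $|\lambda|\leq\lambda_0(\alpha)$, as well as bounded weighted solutions at $\lambda = 0$, which is the claim.

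The main obstacle is the middle step: setting up the far-field/core decomposition so that $E$ is genuinely well defined and continuous \emph{across} the branch point $\lambda = 0$ — choosing the explicit mode $\phi_\ast$ and the parametrization $\gamma = \sqrt{\lambda}$ so that the residual lands in the strongly weighted space, the bordered operator has locally constant Fredholm index $0$ on a full neighborhood of $\gamma = 0$, and the base point $(\gamma,\delta) = (0,0)$ is handled directly by the ``no resonance at $\lambda = 0$'' statement of Lemma \ref{l: kpp fredholm} rather than by a delicate limiting argument; this is where the functional-analytic machinery of \cite{PoganScheel} does the work. Once this is in place, the remaining ingredients — the $C^k$-smallness of the $\mathrm{O}(\delta)$ slow-manifold corrections in \eqref{e: eigenvalue slow manifold expansions} and the identification $\mathcal{A}_\mathrm{red}(0,\lambda) = \Akpp - \lambda$ — are routine, and the conclusion follows by the compactness-and-continuity perturbation off $\delta = 0$ described above, following \cite{AveryHolzerScheelKS}.
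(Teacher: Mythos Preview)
Your proposal is correct and follows essentially the same route as the paper: reduce to the scalar problem on the slow manifold via Proposition \ref{p: eigenvalue reduction}, make the far-field/core ansatz $\tilde{u} = \tilde{u}_c + \beta\chi_+ e^{\nu_\ast(\gamma)\xi}$ in the stronger weight $H^2_{0,\eta}$ with $\gamma = \sqrt{\lambda}$, border to recover Fredholm index $0$, and perform a Lyapunov--Schmidt reduction to obtain a continuous scalar function $E(\gamma,\delta)$ whose zeros detect solutions of the original problem. The only minor difference is in how nonvanishing at $\delta = 0$ is argued: the paper computes $E(0,0) = \langle \Akpp \chi_+, \phi_0\rangle$ explicitly and shows it is nonzero by the same mechanism as in Lemma \ref{l: existence joint invertibility}, then uses continuity in $(\gamma,\delta)$; you instead invoke the spectral-stability and no-resonance statements of Lemma \ref{l: kpp fredholm} to conclude $E(\gamma,0)\neq 0$ on the closed set $\{\Re\gamma\geq 0,\ |\gamma|\leq r\}$ and then perturb by compactness, which is equivalent.
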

Applying Proposition \ref{p: eigenvalue reduction}, we find that \eqref{e: scf eigenvalue problem} has a bounded solution if and only if we have a bounded solution to the reduced eigenvalue problem
\begin{align}
	\partial_{\xi \xi} \tilde{u} + a_1(\xi; \alpha, \lambda, \delta) \partial_\xi \tilde{u} + a_0 (\xi; \alpha, \lambda, \delta) \tilde{u} = \lambda \tilde{u}, \label{e: scf scalar eigenvalue problem}
\end{align}
where
\begin{align*}
	a_1 (\xi; \alpha, \lambda, \delta) &= \csc(\alpha) + f_v (u_\mathrm{scf}, \psi_v (u_\mathrm{scf}, \partial_\xi u_\mathrm{scf}; \alpha, \delta); \alpha, \delta) \tilde{\psi}_v^2 (u_\mathrm{scf}, \partial_\xi u_\mathrm{scf}; \alpha, \lambda, \delta), \\
	a_0 (\xi; \alpha, \lambda, \delta) &= f_u (u_\mathrm{scf}, \psi_v(u_\mathrm{scf}, \partial_\xi u_\mathrm{scf} ; \alpha, \delta)) + f_v (u_\mathrm{scf}, \psi_v (u_\mathrm{scf}, \partial_\xi u_\mathrm{scf}; \alpha, \delta); \alpha, \delta) \tilde{\psi}_v^1 (\u_\mathrm{scf}, \partial_\xi u_\mathrm{scf}; \alpha, \lambda, \delta),
\end{align*}
where we have suppressed the arguments of $u_\mathrm{scf} (\xi; \alpha, \delta^2), v_\mathrm{scf}(\xi; \alpha, \delta^2)$ for notational simplicity. 
\begin{lemma}\label{l: stability coefficient limits}
	We have 
	\begin{align*}
		\lim_{\xi \to \infty} a_1 (\xi; \alpha, \lambda, \delta) &= \csc (\alpha) = 2 \sqrt{\alpha}, \quad \lim_{\xi \to \infty} a_0 (\xi; \alpha, \lambda, \delta) = \alpha.
	\end{align*}
	Moreover, these limits are attained exponentially in $\xi$, with rate uniform in $\lambda, \delta$ small.
\end{lemma}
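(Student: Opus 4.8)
The plan is to read both limits straight off the explicit formulas for $a_0, a_1$ preceding the lemma, feeding in the leading-edge asymptotics of the secondary CSC front from Proposition~\ref{p: secondary CSC existence} and the regularity of the slow-manifold graphs from Proposition~\ref{p: eigenvalue reduction}. Recall $f(u,v) = u(1-u-v)$, so $f_u(u,v) = 1-2u-v$ is affine and $f_v(u,v) = -u$; in particular $f_u(0,1-\alpha) = \alpha$ and $f_v(0,1-\alpha) = 0$. By Proposition~\ref{p: secondary CSC existence} the front satisfies $(\uscf(\xi;\alpha,\delta),\vscf(\xi;\alpha,\delta)) \to (0,1-\alpha)$ as $\xi \to \infty$, and by Proposition~\ref{p: existence regularization} we have $\psi_v(0,0;\alpha,\delta) = 1-\alpha$, so $\psi_v(\uscf(\xi),\partial_\xi\uscf(\xi);\alpha,\delta) \to 1-\alpha$ too. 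Since the maps $\tilde{\psi}_v^{1,2}$ are $C^k$ on the compact region of $(u,w,\alpha,\lambda,\delta)$ in which the front lives, they are uniformly bounded there; passing to the limit in the formulas for $a_0,a_1$ then gives $a_1(\xi) \to \csc(\alpha) + f_v(0,1-\alpha)\,\tilde{\psi}_v^2(0,0;\alpha,\lambda,\delta) = \csc(\alpha)$ and $a_0(\xi) \to f_u(0,1-\alpha) + f_v(0,1-\alpha)\,\tilde{\psi}_v^1(0,0;\alpha,\lambda,\delta) = \alpha$, the $\tilde\psi_v$ terms dropping out because $f_v(0,1-\alpha) = 0$.

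For the exponential rate and its uniformity I would use the far-field/core representation of the front from the proof of Proposition~\ref{p: secondary CSC existence}: $\uscf(\xi;\alpha,\delta) = \chi_-(\xi) + u_c(\xi;\alpha,\delta) + \chi_+(\xi)(\xi + a(\alpha,\delta))e^{-\etasc(\alpha)\xi}$, where $(u_c,a)$ is obtained by the implicit function theorem and hence depends $C^k$ on $\delta$, so that $\|u_c(\cdot;\alpha,\delta)\|_{H^2_{0,\eta}}$ and $|a(\alpha,\delta)|$ are bounded uniformly for $\delta$ small. Using the embedding of $H^2_{0,\eta}(\R)$ into continuous functions with the corresponding weighted decay, this yields, for any fixed $0 < \mu < \etasc(\alpha)$, a constant $C = C(\alpha,\mu)$ with $|\uscf(\xi)| + |\partial_\xi \uscf(\xi)| \le C e^{-\mu\xi}$ for $\xi \ge 0$, uniformly in $\delta$ small. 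Since $\psi_v$ is $C^k$ with bounded derivatives on the relevant compact set, $|\psi_v(\uscf,\partial_\xi\uscf;\alpha,\delta) - (1-\alpha)| \le C e^{-\mu\xi}$ as well, and composing with the affine maps $f_u,f_v$ and the bounded maps $\tilde{\psi}_v^{1,2}$ gives
\[
|a_1(\xi;\alpha,\lambda,\delta) - \csc(\alpha)| = |\uscf(\xi)|\,\bigl|\tilde{\psi}_v^2(\uscf(\xi),\partial_\xi\uscf(\xi);\alpha,\lambda,\delta)\bigr| \le C e^{-\mu\xi},
\]
\[
|a_0(\xi;\alpha,\lambda,\delta) - \alpha| \le \bigl|f_u(\uscf,\psi_v) - f_u(0,1-\alpha)\bigr| + |\uscf|\,\bigl|\tilde{\psi}_v^1\bigr| \le C e^{-\mu\xi},
\]
for $\xi \ge 0$, with $C$ independent of $\lambda,\delta$ small.

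The argument is essentially bookkeeping, and the one place requiring genuine care is the uniformity in $\delta$ of the front's decay: this is \emph{not} a consequence of the merely locally-uniform continuous dependence asserted in Proposition~\ref{p: secondary CSC existence}, and must instead be extracted from the far-field/core structure — $u_c$ bounded in the weighted space $H^2_{0,\eta}$ and the far-field parameter $a$ bounded, both uniformly for $\delta$ small — as recalled above. Uniformity in $\lambda$ is immediate, since $\lambda$ enters $a_0,a_1$ only through $\tilde{\psi}_v^{1,2}$, which are $C^k$, hence bounded with bounded modulus of continuity, on the compact parameter region $|\lambda| < \Lambda$, $|\delta| < \bar\delta$.
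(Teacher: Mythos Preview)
Your proof is correct and follows essentially the same approach as the paper's: both compute the limits by feeding the front asymptotics $(\uscf,\partial_\xi\uscf)\to(0,0)$ and $\psi_v(0,0;\alpha,\delta)=1-\alpha$ into the explicit formulas for $a_0,a_1$, using $f_v(u,v)=-u$ to kill the $\tilde\psi_v$ contributions at the limit. Your treatment of the uniformity in $\delta$ is in fact more careful than the paper's, which simply asserts uniform exponential convergence of the front without spelling out that this comes from the far-field/core construction and the implicit-function-theorem bounds on $(u_c,a)$.
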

\begin{proof}
	Note from Proposition \ref{p: secondary CSC existence} that $(u_\mathrm{scf} (\xi; \alpha, \delta^2), v_\mathrm{scf}(\xi; \alpha, \delta^2)) \to (0, 1-\alpha)$ as $\xi \to \infty$, with exponential rate uniform in $\delta^2$ small. Recall also from Proposition \ref{p: existence regularization} that $\psi_v(0, 0; \alpha, \delta) = 1- \alpha$. Hence
	\begin{align*}
		\lim_{\xi \to \infty} f_v (u_\mathrm{scf}(\xi), \psi_v (u_\mathrm{scf}(\xi), \partial_\xi (\xi) u_\mathrm{scf}; \alpha, \delta); \alpha, \delta) = f_v (0, 1-\alpha; \alpha, \delta) = 0 
	\end{align*} 
	since $f_v(u,v) = - u$. This establishes the claim for $a_1$. By the same argument, the contribution to $a_0$ from its second term vanishes in the limit, and for the first term we have 
	\begin{align}
		\lim_{\xi \to \infty} f_u (u_\mathrm{scf}(\xi), \psi_v (u_\mathrm{scf} (\xi), \partial_\xi u_\mathrm{scf} (\xi); \alpha, \delta) = f_u (0, 1-\alpha) = \alpha,.  
	\end{align}
	as desired. 
\end{proof}
\begin{corollary}\label{c: stability e plus}
	The limiting eigenvalue problem at $+\infty$, 
	\begin{align}
		\partial_{\xi \xi} \tilde{u} + 2 \sqrt{\alpha} \partial_\xi \tilde{u} + \alpha \tilde{u} = \lambda \tilde{u},
	\end{align}
	admits a solution 
	\begin{align}
		e_+(\xi; \gamma) = e^{\nu_- (\gamma) \xi},
	\end{align}
	where $\nu_-(\gamma) = - \sqrt{\alpha} - \gamma$, and $\gamma = \sqrt{\lambda}$ with $\Re \gamma \geq 0$. 
\end{corollary}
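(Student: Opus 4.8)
The plan is to verify the claim by direct substitution, since the limiting operator at $+\infty$ has constant coefficients. I would insert the exponential ansatz $\tilde u(\xi) = e^{\nu \xi}$ into $\partial_{\xi\xi}\tilde u + 2\sqrt{\alpha}\,\partial_\xi \tilde u + \alpha \tilde u = \lambda \tilde u$, which collapses the ODE to the algebraic \emph{characteristic equation} $\nu^2 + 2\sqrt{\alpha}\,\nu + \alpha = \lambda$.

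Next I would complete the square, rewriting the characteristic equation as $(\nu + \sqrt{\alpha})^2 = \lambda$, so that its two roots are $\nu_\pm(\gamma) = -\sqrt{\alpha} \pm \gamma$ with $\gamma = \sqrt{\lambda}$. Fixing the branch of the square root by $\Re \gamma \geq 0$ makes the labeling unambiguous: $\nu_-(\gamma) = -\sqrt{\alpha} - \gamma$ is then the root with the smaller real part. Substituting $\nu = \nu_-(\gamma)$ back into the ODE confirms that $e_+(\xi;\gamma) = e^{\nu_-(\gamma)\xi}$ solves the limiting eigenvalue problem, which is all the Corollary asserts.

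There is essentially no obstacle here — the content is elementary bookkeeping — but two consistency checks are worth recording, since they motivate the notation used in what follows. First, at $\lambda = 0$ (hence $\gamma = 0$) the characteristic equation has the double root $\nu = -\sqrt{\alpha} = -\etasc(\alpha)$, matching the weak, Jordan-block decay rate appearing in the front asymptotics \eqref{e: secondary csc front asymptotics}; this is exactly why $\gamma = \sqrt{\lambda}$, rather than $\lambda$ itself, is the natural small parameter near the edge of the essential spectrum, and why the ``$+$'' solution is the one realizing the critical weight $\etasc$. Second, for $\Re \gamma > 0$ one has $\Re \nu_-(\gamma) < -\sqrt{\alpha} < \Re \nu_+(\gamma)$, so $e_+$ is, up to a scalar, the unique solution decaying in the weighted space $H^2_{0,\etasc}(\R)$ as $\xi \to +\infty$ — the property that will be used in the subsequent far-field/core construction. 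Carrying all of this out requires only the computation above together with the definition $\etasc(\alpha) = \sqrt{\alpha}$.
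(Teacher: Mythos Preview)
Your proof is correct and matches the paper's approach: the corollary is stated without separate proof because it follows immediately from the limiting coefficients computed in the preceding lemma, and your direct substitution into the constant-coefficient ODE is exactly the implicit computation. The additional consistency checks you record are accurate and indeed anticipate how $e_+$ is used in the subsequent far-field/core decomposition.
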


To track eigenvalues possibly bifurcating from the essential spectrum, we make an ansatz which accounts for the loss of spatial localization of eigenfunctions as $\lambda = \gamma^2$ approaches the essential spectrum. That is, we fix $\tilde{\eta} > 0$ small and look for solutions to \eqref{e: scf scalar eigenvalue problem} via the far-field/core ansatz
\begin{align}
	\tilde{u} (\xi; \gamma) = \tilde{u}_c (\xi; \gamma) + \beta \chi_+(\xi) e_+(\xi; \gamma),
\end{align}
requiring $\tilde{u}_c \in H^2_{0, \eta} (\R, \C)$ with $\eta = \etasc + \tilde{\eta}$, so that if $|\gamma|$ is small, $\tilde{u}_c$ decays faster than $e_+(\xi; \gamma)$ as $\xi \to \infty$. Inserting this ansatz into \eqref{e: scf scalar eigenvalue problem} leads to the equation
\begin{align}
	0 = F_\mathrm{stab} (\tilde{u}_c, \beta; \gamma, \delta), \label{e: ff core eigenvalue problem}
\end{align}
where 
\begin{align*}
	F_\mathrm{stab} (\tilde{u}_c, \beta; \gamma, \delta) = \partial_{\xi \xi} [\tilde{u}_c + \beta \chi_+ e_+] + a_1 \partial_\xi [\tilde{u}_c + \beta \chi_+ e_+] + [a_0 - \gamma^2] [\tilde{u}_c + \beta \chi_+ e_+]. 
\end{align*}
We suppress the dependence on the fixed parameter $0 < \alpha < 1$. 
\begin{lemma}\label{l: stability well defined}
	Fix $\tilde{\eta} > 0$ small, and set $\eta = \etasc + \tilde{\eta}$. There exist $\gamma_0, \delta_0 > 0$ such that the map 
	\begin{align}
		F_\mathrm{stab}: H^2_{0, \eta} \times \C \times B(0, \gamma_0) \times (-\delta_0, \delta_0) \to L^2_{0, \eta} 
	\end{align}
	is well-defined, linear in $\tilde{u}_c$ and $\beta$, and $C^k$ in $\gamma$ and $\delta$. Moreover, the equation $(\Bsc(\delta) - K_\delta \gamma^2) \tilde{\mathbf{u}} = 0$ has a solution $\tilde{\mathbf{u}} \in H^2_{0, \etasc} (\R, \C^2)$ with $\gamma^2$ to the right of the essential spectrum if and only if \eqref{e: ff core eigenvalue problem} has a solution with $\Re \gamma \geq 0$. 
\end{lemma}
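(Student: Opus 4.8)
The plan is to verify the two assertions --- well-definedness and the equivalence of solution sets --- separately, treating well-definedness first. I would split $F_\mathrm{stab}(\tilde u_c,\beta;\gamma,\delta)$ according to the far-field/core ansatz into the contribution of the core $\tilde u_c$ and that of the far-field term $\beta\chi_+ e_+$. For the core, $\partial_{\xi\xi}\tilde u_c + a_1\partial_\xi\tilde u_c + (a_0-\gamma^2)\tilde u_c$, boundedness of $a_0,a_1$ --- which follows from Lemma~\ref{l: stability coefficient limits}, boundedness of $\uscf,\vscf$ from Proposition~\ref{p: secondary CSC existence}, and boundedness of the slow-manifold graphs $\psi_v,\tilde\psi_v$ from Propositions~\ref{p: existence regularization} and~\ref{p: eigenvalue reduction} --- immediately gives a bounded map $H^2_{0,\eta}\to L^2_{0,\eta}$, and the $C^k$ dependence of $\uscf,\vscf,\psi_v,\tilde\psi_v$ on $\delta$ and on $\lambda=\gamma^2$ asserted in those propositions passes this regularity to $F_\mathrm{stab}$; linearity in $(\tilde u_c,\beta)$ is manifest.

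The crux of well-definedness is the far-field contribution. Here I would use that, by Corollary~\ref{c: stability e plus}, $e_+(\xi;\gamma)=e^{\nu_-(\gamma)\xi}$ solves the \emph{limiting} eigenvalue problem $\partial_{\xi\xi}e_+ + 2\sqrt\alpha\,\partial_\xi e_+ + \alpha e_+ = \gamma^2 e_+$ exactly. Consequently, on $\{\xi\ge 3\}$ where $\chi_+\equiv 1$, substituting for $\partial_{\xi\xi}e_+$ collapses $\partial_{\xi\xi}(\chi_+ e_+) + a_1\partial_\xi(\chi_+ e_+) + (a_0-\gamma^2)(\chi_+ e_+)$ to $(a_1-2\sqrt\alpha)\,\partial_\xi e_+ + (a_0-\alpha)\,e_+$, the two $\gamma^2 e_+$ terms cancelling; the remaining contributions, generated by the cutoff, are smooth and supported in $\{2\le\xi\le 3\}$. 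By Lemma~\ref{l: stability coefficient limits}, $a_1-2\sqrt\alpha$ and $a_0-\alpha$ decay at some rate $e^{-\kappa\xi}$ as $\xi\to\infty$, uniformly in $\lambda,\delta$ small, while $|e_+(\xi;\gamma)|\le e^{-\sqrt\alpha\,\xi}$ on $\xi\ge 0$ since $\Re\gamma\ge 0$; hence the far-field residual is $\mathrm{O}(e^{-(\sqrt\alpha+\kappa)\xi})$ and lies in $L^2_{0,\eta}$, $\eta=\etasc+\tilde\eta$, provided $\tilde\eta<\kappa$. Differentiating in $\gamma$ brings down powers of $\xi$ from $\partial_\gamma\nu_-=-1$, absorbed by the $e^{-\kappa\xi}$ factor, so this contribution is smooth in $\gamma$; its $\delta$-dependence is $C^k$ through $a_0,a_1$ as above.

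For the equivalence, I would first pass through the slow-manifold reduction of Proposition~\ref{p: eigenvalue reduction}: since the front $\uscf$ stays in the normally hyperbolic part of the slow manifold (Proposition~\ref{p: secondary CSC existence}), a bounded solution of $(\Bsc(\delta)-K_\delta\gamma^2)\tilde{\mathbf u}=0$ corresponds to a bounded solution of the scalar reduced problem~\eqref{e: scf scalar eigenvalue problem} via the bounded lift $\tilde v=\tilde\psi_v\cdot(\tilde u,\partial_\xi\tilde u)$, and this correspondence preserves membership in $H^2_{0,\etasc}$. It remains to match solutions of~\eqref{e: scf scalar eigenvalue problem} in $H^2_{0,\etasc}$ with solutions of~\eqref{e: ff core eigenvalue problem}. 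One direction is immediate: if $F_\mathrm{stab}(\tilde u_c,\beta;\gamma,\delta)=0$ then $\tilde u=\tilde u_c+\beta\chi_+ e_+$ solves~\eqref{e: scf scalar eigenvalue problem}, and since $\gamma^2$ lies to the right of the essential spectrum (the curve $\{-k^2\}$) one has $\Re\gamma>0$, whence $\chi_+ e_+\in H^2_{0,\etasc}$ and so $\tilde u\in H^2_{0,\etasc}$. For the converse, given $\tilde u\in H^2_{0,\etasc}$ solving~\eqref{e: scf scalar eigenvalue problem}, I would examine its asymptotics as $\xi\to\infty$: the limiting equation has characteristic exponents $\nu_\pm(\gamma)=-\sqrt\alpha\pm\gamma$, and because $\Re\nu_+=-\sqrt\alpha+\Re\gamma>-\etasc$, membership in $H^2_{0,\etasc}$ rules out the $e^{\nu_+\xi}$ component. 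Using the exponential convergence of the coefficients together with standard asymptotic ODE theory (Levinson's theorem, or roughness of exponential dichotomies, \cite{Coppel}), it follows that $\tilde u(\xi)=\beta e^{\nu_-\xi}(1+\mathrm{O}(e^{-\kappa\xi}))$ for some $\beta\in\C$; then $\tilde u_c:=\tilde u-\beta\chi_+ e_+$ decays like $e^{-(\sqrt\alpha+\kappa)\xi}$ at $+\infty$, lies in $H^2_{0,\eta}$ (again using $\tilde\eta<\kappa$), and $F_\mathrm{stab}(\tilde u_c,\beta;\gamma,\delta)=0$ by construction.

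I expect the only real obstacle to be bookkeeping: carefully tracking which exponentially weighted space each residual term occupies, and in particular choosing $\tilde\eta$ smaller than the uniform convergence rate $\kappa$ of the coefficients $a_0,a_1$, so that both the far-field residual and the core remainder land in the appropriate spaces. The structural inputs --- the exact cancellation of the $\gamma^2$ term via Corollary~\ref{c: stability e plus}, the exponential convergence of Lemma~\ref{l: stability coefficient limits}, and the reduction of Proposition~\ref{p: eigenvalue reduction} --- are already available.
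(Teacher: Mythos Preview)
Your approach is correct and matches the paper's, which simply invokes Lemma~\ref{l: stability coefficient limits}, Corollary~\ref{c: stability e plus}, Proposition~\ref{p: eigenvalue reduction}, and \cite[Section 5]{PoganScheel} without spelling out the details you provide. One small point of bookkeeping: well-definedness must hold for all $\gamma\in B(0,\gamma_0)$, not only $\Re\gamma\ge 0$, so your bound $|e_+|\le e^{-\sqrt\alpha\,\xi}$ should be replaced by $|e_+|\le e^{(-\sqrt\alpha+\gamma_0)\xi}$ and absorbed by taking $\gamma_0$ small relative to $\kappa-\tilde\eta$---precisely the kind of adjustment you already anticipate in your final paragraph.
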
	
\begin{proof}
	It follows from Lemma \ref{l: stability coefficient limits} and \ref{c: stability e plus} that $F_\mathrm{stab}$ preserves exponential localization of $\tilde{u}_c$, and hence is well-defined. Regularity in $\gamma$ and $\delta$ as well as equivalence to the original eigenvalue problem follow by combining Proposition \ref{p: eigenvalue reduction} with the arguments of \cite[Section 5]{PoganScheel}. 
\end{proof}
Since our ansatz naturally captures the loss of spatial localization associated to the essential spectrum, we recover Fredholm properties on exponentially weighted spaces, and can exploit this to track eigenvalues near the essential spectrum. 
\begin{prop}
	Let $\gamma_0$ and $\delta_0$ be as in Lemma \ref{l: stability well defined}. There exists a function $E : B(0, \gamma_0) \times (-\delta_0, \delta_0) \to \C$ which is $C^k$ in both arguments, such that:
	\begin{itemize}
		\item The equation $(\Bsc(\delta) - K_\delta \gamma^2) \tilde{\mathbf{u}} = 0$ has a solution $\tilde{\mathbf{u}} \in H^2_{0, \etasc} (\R, \C^2)$ with $\gamma^2$ to the right of the essential spectrum if and only if $E(\gamma, \delta) = 0$, with $\Re \gamma > 0$. 
		\item The equation $\Bsc(\delta) \tilde{\mathbf{u}} = 0$ has a solution $\tilde{\mathbf{u}} \in L^\infty_{0, \etasc} (\R, \C^2)$ if and only if $E(0, \delta) = 0$. 
	\end{itemize}
\end{prop}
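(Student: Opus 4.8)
The plan is to carry out a Lyapunov--Schmidt reduction of the far-field/core equation \eqref{e: ff core eigenvalue problem}, producing $E(\gamma,\delta)$ as the resulting scalar solvability (Schur-complement) condition. By Lemma \ref{l: stability well defined}, $F_\mathrm{stab}(\cdot,\cdot;\gamma,\delta)\colon H^2_{0,\eta}\times\C\to L^2_{0,\eta}$, with $\eta=\etasc(\alpha)+\tilde\eta$ for a fixed small $\tilde\eta>0$, is a bounded linear map that is $C^k$ in $(\gamma,\delta)$, and a nontrivial element of its kernel with $\Re\gamma\geq0$ corresponds (via the equivalence in that lemma, and the reconstruction of $\tilde v$ through Proposition \ref{p: eigenvalue reduction}) exactly to the eigenfunctions for $\gamma^2$ right of the essential spectrum, or the bounded solutions at $\gamma=0$, in the two bullets. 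So it suffices to build a $C^k$ scalar function of $(\gamma,\delta)$ whose vanishing is equivalent to $F_\mathrm{stab}$ having a nontrivial kernel, and then to read off for which $\gamma$ the associated solution lies in $H^2_{0,\etasc}$.

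First I would analyze the partial linearization $L(\gamma,\delta):=D_{\tilde u_c}F_\mathrm{stab}(\cdot;\gamma,\delta)=\partial_{\xi\xi}+a_1\partial_\xi+(a_0-\gamma^2)$ as an operator $H^2_{0,\eta}\to L^2_{0,\eta}$. By Lemma \ref{l: stability coefficient limits}, together with the exponential convergence of the front to the stable wake state $(1,0)$ from Proposition \ref{p: secondary CSC existence} for the $\xi\to-\infty$ limit, the coefficients converge exponentially to constant limits as $\xi\to\pm\infty$, uniformly in $\gamma,\delta$ small, so Palmer's theorem \cite{Palmer2} gives that $L(\gamma,\delta)$ is Fredholm with index determined by the limiting spatial eigenvalues. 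At $+\infty$ these are $-\etasc(\alpha)\pm\gamma$ by Corollary \ref{c: stability e plus}; at $-\infty$ the reduced operator is hyperbolic with one spatial eigenvalue on each side of the imaginary axis, since $\tilde f'(1;\alpha)<0$. Conjugating by the weight $\omega_{0,\eta}$ shifts the $+\infty$ eigenvalues to $\tilde\eta\pm\gamma$, both in the open right half plane once $|\gamma|<\tilde\eta$, so the conjugated limiting operator at $+\infty$ admits no decaying mode while the one-dimensional stable subspace at $-\infty$ persists; hence $L(\gamma,\delta)$ has Fredholm index $-1$. Moreover a kernel element would have to decay at $+\infty$ faster than $e^{-\eta\xi}$, whereas the fastest-decaying solution of the limiting equation decays only like $e^{(-\etasc(\alpha)-\gamma)\xi}$, which is slower than $e^{-\eta\xi}$ once $|\gamma|<\tilde\eta$; so $\ker L(\gamma,\delta)=\{0\}$ and the cokernel is one-dimensional, spanned by a solution $\psi_\mathrm{ad}(\cdot;\gamma,\delta)$ of the formal adjoint equation in the appropriate dual weighted space. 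Since $B(0,\gamma_0)\times(-\delta_0,\delta_0)$ is contractible, this cokernel line bundle is trivial and $\psi_\mathrm{ad}$ can be chosen $C^k$ in $(\gamma,\delta)$.

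Next I would decompose $F_\mathrm{stab}(\tilde u_c,\beta;\gamma,\delta)=L(\gamma,\delta)\tilde u_c+\beta R(\cdot;\gamma,\delta)$, where $R:=(\partial_{\xi\xi}+a_1\partial_\xi+a_0-\gamma^2)(\chi_+e_+)$; since $e_+$ solves the limiting equation at $+\infty$ exactly and the coefficients converge exponentially, $R\in L^2_{0,\eta}$ — this is precisely the preservation of exponential localization already used in Lemma \ref{l: stability well defined}. Set $E(\gamma,\delta):=\langle\psi_\mathrm{ad}(\cdot;\gamma,\delta),R(\cdot;\gamma,\delta)\rangle$ in the duality pairing. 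Solvability of $L(\gamma,\delta)\tilde u_c=-\beta R$ modulo the cokernel, combined with injectivity of $L(\gamma,\delta)$, shows that $F_\mathrm{stab}=0$ has a nontrivial solution if and only if $\beta\neq0$ and $E(\gamma,\delta)=0$, in which case $\tilde u_c$ is uniquely determined by $\beta$; and $E$ is $C^k$ on $B(0,\gamma_0)\times(-\delta_0,\delta_0)$ since $\psi_\mathrm{ad}$, $R$, and the coefficients $a_0,a_1$ are (the latter by Proposition \ref{p: eigenvalue reduction}). Feeding this into the equivalence of Lemma \ref{l: stability well defined}: if $E(\gamma,\delta)\neq0$ there is no nontrivial solution, hence no eigenvalue and no bounded solution at $\lambda=\gamma^2$; if $E(\gamma,\delta)=0$, taking $\beta=1$ gives $\tilde u=\tilde u_c+\chi_+e_+$, and since $\omegasc(\xi)\chi_+(\xi)e_+(\xi;\gamma)=\chi_+(\xi)e^{-\gamma\xi}$, the reconstructed $\tilde{\mathbf{u}}$ lies in $H^2_{0,\etasc}$ precisely when $\Re\gamma>0$ and in $L^\infty_{0,\etasc}$ when $\gamma=0$. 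This yields both bullets.

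The main obstacle will be the Fredholm bookkeeping in the second step: establishing that $L(\gamma,\delta)$ has index exactly $-1$ with trivial kernel \emph{uniformly} over the parameter range, which hinges on correctly matching the (empty) collection of $+\infty$ modes permitted by the deliberately too-strong weight against the one-dimensional allowed subspace at $-\infty$, and on checking that $\psi_\mathrm{ad}$ genuinely lies in the dual weighted space so that the pairing defining $E$ converges and is $C^k$; there is also the milder point of trivializing the cokernel line bundle so that $E$ is a bona fide scalar function rather than merely a section. These are exactly the places where the functional-analytic approach to tracking eigenvalues through the essential spectrum from \cite{PoganScheel}, in the form used in \cite{AveryHolzerScheelKS}, enters.
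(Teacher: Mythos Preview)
Your approach is correct and follows the same overall strategy as the paper---a Lyapunov--Schmidt reduction on the far-field/core formulation---but with a different choice of splitting. The paper fixes the base point $(\gamma,\delta)=(0,0)$, uses the single Fredholm operator $\Akpp=D_{\tilde u_c}F_\mathrm{stab}(0,0;0,0)$ with its fixed cokernel element $\phi_0$ and fixed range projection $P_0$, and then solves $P_0 F_\mathrm{stab}=0$ for $\tilde u_c=\beta\,\hat u_c(\gamma,\delta)$ via the implicit function theorem; the reduced function is $E(\gamma,\delta)=\langle L(\gamma,\delta)[\hat u_c+\chi_+e_+],\phi_0\rangle$. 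You instead carry the full parameter-dependent operator $L(\gamma,\delta)$, establish index $-1$ with trivial kernel uniformly, trivialize the resulting cokernel line bundle to obtain a $C^k$ family $\psi_\mathrm{ad}(\cdot;\gamma,\delta)$, and set $E=\langle\psi_\mathrm{ad},R\rangle$. Both are legitimate Lyapunov--Schmidt schemes, and the resulting $E$'s differ only by a nonvanishing $C^k$ factor, so their zero sets agree. The paper's version is lighter: it needs Fredholm information only at a single point (supplied by Lemma~\ref{l: kpp fredholm}) and sidesteps the uniform index computation and the bundle trivialization that you correctly flag as the main obstacles in your last paragraph. Your version, on the other hand, makes the structure of $E$ more transparent as a genuine solvability pairing at each parameter value, which can be convenient if one later wants to compute derivatives of $E$ or track sign information directly.
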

\begin{proof}
	This follows from a Lyapunov-Schmidt reduction. Note that $D_{\tilde{u}_c} F_\mathrm{stab}(0, 0; 0, 0) = \Akpp$, and recall from Section \ref{s: existence} that $\Akpp : H^2_{0, \eta} \to L^2_{0, \eta}$ is Fredholm with index -1. More precisely, $\Akpp$ has trivial kernel and one-dimensional cokernel (see e.g. \cite[Section 2]{AveryGarenaux} for Fredholm properties of pulled Fisher-KPP fronts), spanned by some function $\phi_0$. Let $P_0$ denote the $L^2$-orthogonal projection onto the range of $\Akpp : H^2_{0, \eta} \to L^2_{0, \eta}$. We may then decompose the equation $F_\mathrm{stab}(\tilde{u}_c, \beta; \gamma, \delta) = 0$ as 
	\begin{align*}
		\begin{cases}
			P_0 F_\mathrm{stab} (\tilde{u}_c, \beta; \gamma, \delta) = 0 \\
			\langle F_\mathrm{stab}(\tilde{u}_c, \beta; \gamma, \delta), \phi_0\rangle = 0. 
		\end{cases}
	\end{align*}
	This system has a trivial solution $(\tilde{u}_c, \beta; \gamma, \delta)$. The linearization of the first equation with respect to $\tilde{u}_c$ at this trivial solution is $P_0 \Akpp$, which is invertible by construction. Hence, we may use the implicit function theorem to solve the first equation for $\tilde{u}_c (\beta; \gamma, \delta)$. Moreover, since the equation is linear in both $\tilde{u}_c$ and $\beta$, it follows that we must have $\tilde{u}_c (\beta; \gamma, \delta) = \beta \hat{u}_c (\gamma, \delta)$ for some $\hat{u}_c (\gamma, \delta) \in H^2_{0, \eta}$. Inserting this into the second equation, we find the scalar equation
	\begin{align*}
		 0 = \beta \langle [\partial_{\xi \xi} + a_1 \partial_x + a_0 - \gamma^2][\hat{u}_c + \chi_+ e_+], \phi_0 \rangle. 
	\end{align*}
	Hence, we cancel the factor of $\beta$, and define 
	\begin{align}
		E(\gamma, \delta) = \langle [\partial_{\xi \xi} + a_1 \partial_x + a_0 - \gamma^2][\hat{u}_c + \chi_+ e_+], \phi_0 \rangle. 
	\end{align}
	Regularity of $E(\gamma, \delta)$ follows from Lemma \ref{l: stability well defined}. 
\end{proof}
\begin{proof}[Proof of Proposition \ref{p: scf stability near origin}]
	We have shown that we have a solution to the original eigenvalue problem with $\lambda = \gamma^2$ to the right of the essential spectrum if and only if we have $E(\gamma, \delta) = 0$, with $\Re \gamma \geq 0$. Since $E$ is continuous in both arguments, to rule out unstable eigenvalues near the origin, it then suffices to show that $E(0, 0) \neq 0$. We find
	\begin{align*}
		E(0,0) = \langle \Akpp ( \hat{u}_c(0,0) + \chi_+ e_+(\cdot; 0)), \phi_0 \rangle. 
	\end{align*}
	Since $\hat{u}_c \in H^2_{0, \eta}$ is exponentially localized, we move $\Akpp$ to the other side of the inner product in this term as its adjoint, obtaining 
	\begin{align*}
		\langle \Akpp \hat{u}_c (0,0), \phi_0 \rangle = \langle \hat{u}_c(0,0), \Akpp^* \phi_0 \rangle = 0
	\end{align*}
	since $\phi_0$ is in the kernel of $\Akpp^* : H^2_{0, -\eta} \to L^2_{0, -\eta}$. Noting that $e_+(\xi; 0) = 1$, we then have
	\begin{align*}
		E(0,0) = \langle \Akpp \chi_+, \phi_0 \rangle. 
	\end{align*}
	It then follows from the proof of Lemma \ref{l: existence joint invertibility} that $E(0,0) \neq 0 $, as desired. 
\end{proof}

We are now ready to complete the proof of Proposition \ref{p: secondary csc spectral stability} by combining Propositions \ref{p: large lambda stability} and \ref{p: scf stability near origin}, together with a standard argument excluding eigenvalues in the intermediate $|\lambda|$ regime, to establish marginal spectral stability of the secondary CSC fronts. 
\begin{proof}[Proof of Proposition \ref{p: secondary csc spectral stability}]
	By Proposition \ref{p: large lambda stability}, there exists $\Lambda_0, \delta_0 > 0$ such that the equation $(\Bsc(\delta) - K_\delta \lambda) \tilde{\mathbf{u}} = 0$ has no bounded solutions if $|\delta| < \delta_0$ and $|\lambda| \geq \Lambda_0$ with $\Re \lambda \geq 0$. By Proposition \ref{p: scf stability near origin}, there exist $\lambda_0$ and $\delta_* > 0$ such that we have no bounded solutions to the same equation with $|\lambda| \leq \lambda_0, |\delta| \leq \delta_*$, and $\lambda$ away from the negative real axis. To complete the proof of Proposition \ref{p: secondary csc spectral stability}, it only remains to exclude eigenvalues in the intermediate region $\lambda_0 < |\lambda| < \Lambda_0$, $\Re \lambda \geq 0$. This follows, for instance, from spectral stability of the Fisher-KPP front at $\delta = 0$, together with robustness of exponential dichotomies. 
\end{proof}

\section{Tracking the total tumor mass --- heuristics and numerics}\label{s: mass}
We now explore implications of our predictions for the spreading speed on the dynamics of the total cancer mass, which is well defined when we consider \eqref{e: eqn} in a bounded domain $x \in [0, L]$. We use Neumann boundary conditions $u_x = v_x = 0$ at $x = 0$, and Dirichlet boundary conditions $u = v = 0$. If $L$ is sufficiently large, then the spreading dynamics in the bounded domain should be well approximated by the unbounded domain limit, until the front interface hits the right boundary \cite{AveryDedinaSmithScheel}. In this setting, we define the total cancer mass by 
\begin{align*}
	M(\tau; \alpha) = \int_0^L u(x, \tau; \alpha) + v(x, \tau; \alpha) \, dx,
\end{align*}
and are interested in determining when the cancer mass is increasing or decreasing in $\alpha$. 

We consider spreading from step function initial conditions,
\begin{align*}
	u_0 (x) = \begin{cases}
		1, & 0 \leq x \leq x_0, \\
		0, & x_0 < x \leq L,
	\end{cases}
	\qquad
	v_0 (x) = \begin{cases}
		1-\alpha, & 0 \leq x \leq x_0, \\
		0, & x_0 < x \leq L,
	\end{cases}
\end{align*}
for some fixed $0 < x_0 < L$, but expect similar results to hold for more general initial conditions. To make a heuristic prediction, informed by our rigorous results, we crudely approximate $u$ and $v$ with piecewise constant functions moving with the appropriate spreading speeds. 

\paragraph{Staged invasion regime.} Hence, in the staged invasion regime $0 < \alpha < \frac{1}{1+\eps}$, we define the approximate front positions
\begin{align*}
	\tilde{\sigma}_\mathrm{sc}(\tau; \alpha) &= \min (x_0 + \csc(\alpha) \tau, L), \\
	\tilde{\sigma}_\mathrm{pt}(\tau; \alpha, \eps) &= \min (x_0 + \cpt(\alpha, \eps) \tau, L),
\end{align*}
and make the approximations
\begin{align*}
	u(x,\tau; \alpha) \approx \begin{cases}
		1, &0 \leq x < \tilde{\sigma}_\mathrm{sc}(\tau; \alpha), \\
		0, & \tilde{\sigma}_\mathrm{sc}(\tau; \alpha) \leq x \leq L, 
	\end{cases}
\end{align*}
and
\begin{align*}
	v(x,\tau; \alpha) \approx 
	\begin{cases}
		0, & 0 \leq x < \tilde{\sigma}_\mathrm{sc}(\tau; \alpha), \\
		1-\alpha, & \tilde{\sigma}_\mathrm{sc}(\tau; \alpha) \leq x \leq \tilde{\sigma}_\mathrm{pt}(\tau; \alpha, \eps), \\
		0, &  \tilde{\sigma}_\mathrm{pt}(\tau; \alpha, \eps) < x \leq L. 
	\end{cases}
\end{align*}
This leads to the approximation for the total cancer mass
\begin{align}
	M_\mathrm{app}(\tau; \alpha, \eps) = \tilde{\sigma}_\mathrm{sc} (\tau; \alpha) + (1-\alpha) [\tilde{\sigma}_\mathrm{pt}(\tau; \alpha, \eps) - \tilde{\sigma}_\mathrm{sc}(\tau; \alpha)]. \label{e: Mapp}
\end{align}
Whether $M(\tau;\alpha, \eps)$ is increasing or decreasing in $\alpha$ then depends on whether either the primary or secondary front has reached the boundary. 

If neither front has reached the boundary, then $\tilde{\sigma}_\mathrm{sc} (\tau; \alpha) = x_0 + 2 \sqrt{\alpha} \tau$ and $\tilde{\sigma}_\mathrm{pt}(\tau; \alpha, \eps) = x_0 + 2 \sqrt{\frac{1-\alpha}{\eps}} \tau$, and a short calculation leads to 
\begin{align*}
	\partial_\alpha M_\mathrm{app}(\tau; \alpha, \eps) = \frac{3}{2} [\csc(\alpha) - \cpt(\alpha, \eps)] \tau < 0, 
\end{align*}
since $\cpt(\alpha, \eps) > \csc(\alpha)$ in the staged invasion regime. Hence, when we are in the staged invasion regime and neither front has yet reached the boundary, the total cancer mass is decreasing in the TC death rate $\alpha$, as one might hope. 

	\begin{figure}
	\centering
	\begin{subfigure}{0.495\textwidth}
		\includegraphics[width=1\textwidth]{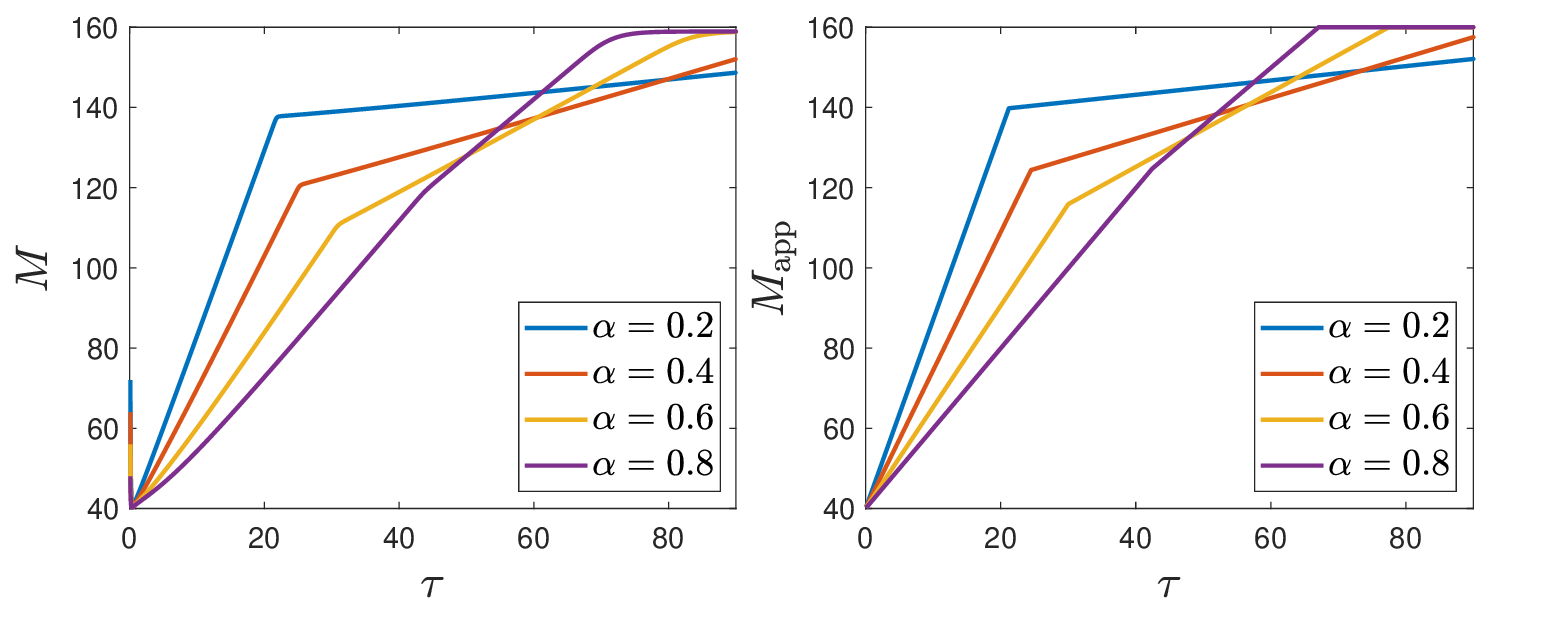}
	\end{subfigure}
	\hfill
	\begin{subfigure}{0.495\textwidth}
		\includegraphics[width=1\textwidth]{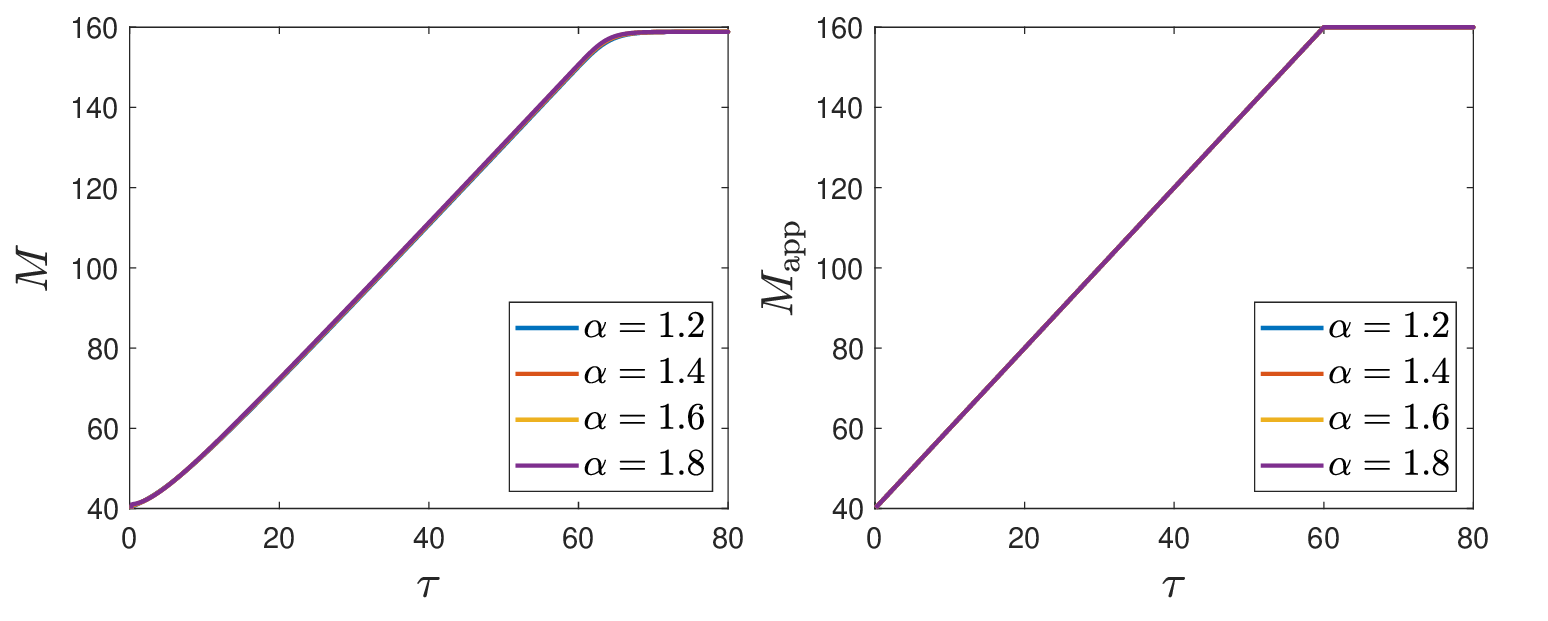}
	\end{subfigure}
	\caption{Left two panels: total cancer mass in the staged invasion regime, with $\eps = 0.1$ and $\alpha$ ranging from $0.2$ to $0.8$, measured via direct simulation (far left) and approximated via \eqref{e: Mapp} (center left). As predicted, the cancer mass is initially decreasing in $\alpha$, but then starts to increase shortly after the primary TC front hits the right boundary. Right two panels: total cancer mass in the TC extinction regime, with $\eps = 0.1$ and $\alpha$ ranging from $1.2$ to $1.8$. As predicted, the total cancer mass is essentially independent of $\alpha$ in this regime.}
	\label{f: mass}
\end{figure}

If the primary front has reached the boundary, so that $\tilde{\sigma}_\mathrm{pt}(\tau, \alpha, \eps) = L$ but $\tilde{\sigma}_\mathrm{sc} (\tau; \alpha) = x_0 + 2 \sqrt{\alpha} \tau$, then from a short calculation we find
\begin{align*}
	\partial_\alpha M_\mathrm{app}(\tau; \alpha, \eps) = x_0 + 3 \sqrt{\alpha} \tau - L. 
\end{align*}
Hence, $M_\mathrm{app}$ remains decreasing in $\alpha$ for $\tau$ such that $x_0 + 3 \sqrt{\alpha} \tau < L$, but then starts to \emph{increase} with $\alpha$ for $\tau$ such that $x_0 + 2 \sqrt{\alpha} \tau < L < x_0 + 3 \sqrt{\alpha} \tau$.

Once both fronts have reached the boundary, $\tilde{\sigma}_\mathrm{pt} = \tilde{\sigma}_\mathrm{pc} = L$, then the total cancer mass is constant, $M_\mathrm{app}(\tau, \alpha, \eps) \equiv L$.

In summary, in the staged invasion regime $0 < \alpha < \frac{1}{1+\eps}$: 
\begin{itemize}
	\item The total cancer mass is decreasing with $\alpha$ until $x_0 + \cpt(\alpha, \eps) \tau = L$.
	\item The total cancer mass remains decreasing in $\alpha$ until $x_0 + 3 \sqrt{\alpha} \tau = L$. 
	\item The total cancer mass is then increasing in $\alpha$ for $x_0 + 2 \sqrt{\alpha} \tau < L < x_0 + 3 \sqrt{\alpha} \tau$, that is, until the CSC front reaches the right boundary. 
\end{itemize}
This approximation is well confirmed by comparison to numerical simulations; see Figure \ref{f: mass}. These results highlight the importance of transient dynamics for discussing the tumor invasion paradox in this setting. 

\paragraph{TC extinction regime.} In the TC extinction regime $\alpha > \frac{1}{1+\eps}$, CSCs spread faster than TCs, and so we expect the solution to be dominated by CSCs. Hence, we define the approximate primary CSC front position
\begin{align*}
	\tilde{\sigma}_\mathrm{pc} (\tau) = \min(x_0 + 2\tau, L), 
\end{align*}
and make the approximations
\begin{align*}
	v(x,\tau; \alpha, \eps) \approx 0, \qquad u(x, \tau; \alpha, \eps) \approx \begin{cases}
		1, & 0 < x < \tilde{\sigma}_\mathrm{pc}(\tau) \\
		0, & \tilde{\sigma}_\mathrm{pc}(\tau) \leq x \leq L,
	\end{cases}
\end{align*}
leading to the simple approximation $M_\mathrm{app}(\tau, \alpha, \eps) = \min(2\tau, L)$ for the total cancer mass. In particular, in the TC extinction regime, the total cancer mass is roughly independent of the tumor death rate $\alpha$, which is corroborated by numerical simulations in Figure \ref{f: mass}.  
\section{Discussion} \label{s: discussion}
We have shown that in the $0 < \eps \ll 1$ limit of \eqref{e: eqn}, the spreading dynamics are governed by pulled (that is, linearly determined) invasion fronts, with predictions for the invasion speeds in Theorems \ref{t: primary invasion} through \ref{t: primary CSC invasion}. In the staged invasion regime $0 < \alpha < \frac{1}{1+\eps}$, one sometimes finds a tumor invasion paradox, where some measure of the tumor growth (either the total cancer mass or the spreading of CSC cells) is enhanced when the TC death rate $\alpha$ is increased. 

One limitation of the model \eqref{e: eqn} is that in the long time, the cancer cell population is dominated by CSCs, which is in contrast to experimental evidence that cancer stem cells typically make up a small fraction of a given tumor. This could be remedied by modifying \eqref{e: eqn} to allow for different carrying capacities of TCs and CSCs, which would be an interesting avenue for future work. As parameters representing these carrying capacities are varied, the relevant fronts may at some points become pushed, or nonlinearly determined. The methods of \cite{avery22} should allow one to efficiently explore this parameter space numerically, determining when the fronts are pushed or pulled. 

Our method for establishing linearly determinacy of invasion fronts is generally applicable to singularly perturbed two-component reaction diffusion systems. The key ingredients are that the formally reduced equation at $\eps = 0$ is of Fisher-KPP type, and that its fronts live within a normally hyperbolic slow manifold in the traveling wave formulation. 

	\bibliographystyle{abbrv}
	\bibliography{tumorbib}
\end{document}